\documentclass[12pt]{amsart}

\usepackage[english]{babel}

\usepackage[colorinlistoftodos]{todonotes}

\usepackage[pdftex,paper=a4paper,portrait=true,textwidth=450pt,textheight=675pt,tmargin=3cm,marginratio=1:1]{geometry}

\usepackage{amsfonts}

\usepackage{amsmath}

\usepackage{bbm}

\usepackage{amsthm}

\usepackage{amssymb}
\usepackage{enumerate}

\usepackage{eufrak}

\usepackage{cancel}

\usepackage{color}

\usepackage[curve]{xypic}






\newtheorem{theorem}{Theorem}[section]

\newtheorem{corollary}[theorem]{Corollary}

\newtheorem{proposition}[theorem]{Proposition}

\newtheorem{proposition-definition}[theorem]{Proposition-Definition}

\newtheorem{lemma}[theorem]{Lemma}

\newtheorem{remark}[theorem]{Remark}

\theoremstyle{definition}

\newtheorem{definition}[theorem]{Definition}

\theoremstyle{property}

\makeatletter
\newcommand{\contraction}[5][1ex]{%
  \mathchoice
    {\contraction@\displaystyle{#2}{#3}{#4}{#5}{#1}}%
    {\contraction@\textstyle{#2}{#3}{#4}{#5}{#1}}%
    {\contraction@\scriptstyle{#2}{#3}{#4}{#5}{#1}}%
    {\contraction@\scriptscriptstyle{#2}{#3}{#4}{#5}{#1}}}%
\newcommand{\contraction@}[6]{%
  \setbox0=\hbox{$#1#2$}%
  \setbox2=\hbox{$#1#3$}%
  \setbox4=\hbox{$#1#4$}%
  \setbox6=\hbox{$#1#5$}%
  \dimen0=\wd2%
  \advance\dimen0 by \wd6%
  \divide\dimen0 by 2%
  \advance\dimen0 by \wd4%
  \vbox{%
    \hbox to 0pt{%
      \kern \wd0%
      \kern 0.5\wd2%
      \contraction@@{\dimen0}{#6}%
      \hss}%
    \vskip 0.5ex
    \vskip\ht2}}

\newcommand{\contraction@@}[3][0.05em]{%
  \hbox{%
    \vrule width #1 height 0pt depth #3%
    \vrule width #2 height 0pt depth #1%
    \vrule width #1 height 0pt depth #3%
    \relax}}
\makeatother

\DeclareFontFamily{OT1}{rsfs}{}
\DeclareFontShape{OT1}{rsfs}{n}{it}{<-> rsfs10}{}
\DeclareMathAlphabet{\curly}{OT1}{rsfs}{n}{it}

\renewcommand\L{\mathcal L}

\renewcommand\S{\mathcal S}
\renewcommand\O{\mathcal O}
\newcommand\PP{\mathbb P}

\newcommand\cE{\mathcal E}

\newcommand\F{\mathcal F}

\newcommand\G{\mathcal G}

\newcommand\U{\mathfrak U}

\newcommand\X{\mathfrak X}

\newcommand\C{\mathbb C}

\newcommand\cB{\mathcal B}

\newcommand\cQ{\mathcal Q}

\newcommand\Q{\mathbb Q}

\newcommand\Z{\mathbb Z}

\newcommand\ch{\mathrm{ch}}

\newcommand\td{\mathrm{td}}

\newcommand\vb{\mathrm{vb}}

\newcommand\Into{\ar@{^(->}[r]<-.3ex>}

\newcommand\rk{\operatorname{rank}}

\newcommand\id{\operatorname{id}}

\newcommand\Pic{\operatorname{Pic}}

\newcommand\Spec{\operatorname{Spec}\,}

\newcommand\beq[1]{\begin{equation}\label{#1}}
\newcommand\eeq{\end{equation}}
\newcommand\beqa{\begin{eqnarray*}}
\newcommand\eeqa{\end{eqnarray*}}


\renewcommand{\id}{\operatorname{id}}
\renewcommand{\P}{\PP(a,b,c)}
\newcommand{\bs}{\boldsymbol}
\newcommand{\bmu}{\bs{\mu}}

\let\polishl\l
\renewcommand{\l}{\ell}

\renewcommand{\i}{\hat{i}}
\renewcommand{\j}{\hat{j}}
\renewcommand{\k}{\hat{k}}
\newcommand{\rG}{\mathsf{G}}
\newcommand{\rH}{\mathsf{H}}

\newcommand{\rc}{\mathsf{c}}
\newcommand{\T}{\mathbf{T}}



\setcounter{secnumdepth}{2}
\DeclareRobustCommand{\SkipTocEntry}[4]{}

\begin{document}
\title[Weighted projective planes and modular forms]{Sheaves on weighted projective planes and modular forms}
\author[A.~Gholampour, Y.~Jiang and M. Kool]{Amin Gholampour, Yunfeng Jiang and Martijn Kool}
\maketitle
\begin{abstract}
We give an explicit description of toric sheaves on the weighted projective plane $\PP(a,b,c)$ viewed as a toric Deligne-Mumford stack. The integers $(a,b,c)$ are not necessarily chosen coprime or mutually coprime allowing for gerbe and root stack structures. 

As an application, we describe the fixed point locus of the moduli scheme of stable rank 1 and 2 torsion free sheaves on $\PP(a,b,c)$ with fixed $K$-group class. Summing over all $K$-group classes, we obtain explicit formulae for generating functions of the topological Euler characteristics.

In the case of stable rank 2 locally free sheaves on $\PP(a,b,c)$ with $a,b,c \leq 2$ the generating functions can be expressed in terms of Hurwitz class numbers and give rise to modular forms of weight $3/2$. This generalizes Klyachko's computation on $\PP^2$ and is consistent with $S$-duality predictions from physics. 
\end{abstract}

\tableofcontents

\section{Introduction}

A toric sheaf on a smooth toric variety $X$ is a coherent sheaf together with a lift of the action of the torus $\T$ on $X$. Toric sheaves on smooth toric varieties have been well-studied, e.g.~in the work of A.~A.~Klyachko and M.~Perling \cite{Kly1, Per1} among others. The idea is to cover $X$ by affine open $\T$-invariant subsets $U_\sigma \cong \C^d$, corresponding to the cones $\sigma$ of dimension $d$ in the fan $\Sigma$ of $X$ and restrict $\F$ to each of these open subsets. Over these open subsets, the toric sheaf is given by a module with an $X(\T)$-grading, where $X(\T)$ is the character group of $\T$. This local data becomes particularly explicit in the case $\F$ is in addition locally free, reflexive, torsion free or just pure. Supplementing this local data with gluing conditions gives a very explicit description of $\F$. This can be used to compute Chern classes of $\F$ and describe moduli of toric sheaves \cite{Kly2, Per2, Pay, Koo1}. This in turn can be used to compute the topological Euler characteristic of the entire moduli space of stable sheaves on $X$ using localization \cite{Kly2, Koo2}.

Our goal is to generalize the above to toric Deligne-Mumford stacks\footnote{We only consider the case in which the stack $\X$ is smooth as in \cite{BCS}.} $\X$ introduced by L.~A.~Borisov, L.~Chen and G.~G.~Smith \cite{BCS}. Such stacks are described by a stacky fan $\mathbf{\Sigma}$ \cite{BCS, FMN}. Again, one can consider the cones $\sigma$ of dimension $d$ in $\mathbf{\Sigma}$, which gives a cover by open $\T$-invariant substacks $\mathfrak{U}_\sigma \cong [\C^d/N(\sigma)]$ \cite[Prop.~4.1]{BCS}. Here $N(\sigma)$ is a finite abelian group acting on $\C^d$. We can then restrict a toric sheaf $(\F,\Phi)$ on $\X$ to each of these open substacks. Viewing these open substacks as groupoids, we get similar data as before, but with two new phenomena:
\begin{enumerate} [i)]
\item The action of the torus $\T$ on $\C^d$ can be \emph{non-primitive}. On a smooth toric variety, one can always choose coordinates on $U_\sigma$ such that the action becomes $$(\lambda_1, \ldots, \lambda_d)\cdot(x_1, \ldots, x_d) = (\lambda_1 x_1, \ldots, \lambda_d x_d).$$ However in the current setup higher powers of  $\lambda_i$ can occur.
\item The sheaf $\F|_{\mathfrak{U}_\sigma}$ is described by a module carrying a grading by $X(\T)$ and each weight space carries a further \emph{fine grading} by the character group $X(N(\sigma))$ of $N(\sigma)$.
\end{enumerate}
This local data becomes particularly explicit when $\F$ is in addition locally free, reflexive, torsion free etc. 

To obtain a global description, one has to impose gluing conditions on the local data. In order to avoid overly cumbersome notation and because our primary objective is to compute generating functions of topological Euler characteristics of moduli spaces of stable sheaves, we restrict attention to an example: stacky weighted projective planes $\PP(a,b,c)$ for \emph{any} positive integers $a,b,c$. 
The integers $(a,b,c)$ are not necessarily chosen coprime or mutually coprime allowing for gerbe and root stack structures. 

In the case of smooth toric varieties, the gluing proceeds by intersecting any two open affine invariant subsets $U_\sigma$, $U_\tau$ and matching the local data. This involves pulling back the local data along \emph{open immersions}. This times, we have to ``intersect'' any two open substacks $\mathfrak{U}_\sigma \cong [\C^d/N(\sigma)]$, $\mathfrak{U}_\tau \cong [\C^d/N(\tau)]$ and pull-back the local data to the stack theoretic intersection. At the level of groupoids, this turns out to correspond to pulling back the local data along certain \emph{\'etale morphisms}. In this way, we obtain an explicit description of toric sheaves on $\PP(a,b,c)$. This can be used to study toric locally free and toric torsion free sheaves on $\PP(a,b,c)$ and describe their moduli.

As an application, we
consider the moduli spaces of stable rank 1 and 2 torsion free sheaves on $\PP(a,b,c)$. These moduli spaces\footnote{As opposed to Nironi, we consider $\mu$-stability instead of Gieseker stability and we fix the $K$-group class of the sheaf instead of the modified Hilbert polynomial. Here $\mu$ is the linear term of the \emph{modified} Hilbert polynomial divided by its quadratic term. This means $\mu$ depends on a choice of polarization and \emph{generating sheaf}.}, which are in fact schemes, have been introduced by F.~Nironi \cite{Nir} for any rank and any (not necessarily toric) projective DM stack. We briefly recall the relevant parts of Nironi's construction at the beginning of Section 7. The action of the torus $\T$ on $\PP(a,b,c)$ lifts to the moduli space and we use the explicit description of toric sheaves on $\PP(a,b,c)$ to compute its fixed point locus and its Euler characteristic. The rank 1 case leads to counting of coloured partitions (e.g.~as in \cite{DS}). 

In the rank 2 case, generating functions of Euler characteristics of moduli spaces of stable locally free sheaf on $\PP^2$ were first computed by Klyachko in his beautiful papers \cite{Kly2, Kly3}. He expresses his answer in terms of Hurwitz class numbers $H(\Delta)$ and the sum of divisors function $\sigma_0(n)$. For any $\Delta>0$, $H(\Delta)$ is the number of (equivalence classes
of) positive definite integer binary quadratic forms $AX^2+BXY+CY^2$ with discriminant $B^2 - 4AC = -\Delta$ and weighted by the size of its automorphisms group\footnote{This means forms equivalent to $\lambda(X^2+Y^2)$ and $\lambda(X^2+XY+Y^2)$ are counted with weights $1/2$ and $1/3$ respectively.}. 
\begin{theorem} [Klyachko] \label{Klyachko}
Let $N_{\PP^2}(2,c_1,\chi)$ be the moduli space of rank 2 stable locally free sheaves on $\PP^2$ with first Chern class $c_1 \in \Z$ and holomorphic Euler characteristic $\chi$. Let
$$
\rH^{\vb}_{c_1}(q) := \sum_{\chi} e(N_{\PP^2}(2,c_1,\chi)) q^\chi,
$$
where $e(\cdot)$ denotes topological Euler characteristic. Then $\rH^{\vb}_{c_1}(q)$ equals $q^{\frac{1}{4}c_{1}^{2}+\frac{3}{2}c_1+2}$ times
$$
\sum_{n=1}^{\infty} 3H(4n-1) q^{\frac{1}{4}-n}
$$
when $c_1$ is odd and
$$
\sum_{n=1}^{\infty} 3\Big(H(4n) - \frac{1}{2} \sigma_0(n) \Big) q^{-n}
$$
when $c_1$ is even.
\end{theorem}
\noindent In the physics literature, C.~Vafa and E.~Witten \cite{VW} observed that in the case $c_1$ is odd Klyachko's expression and work of D.~Zagier  \cite{Zag} (see also \cite{HZ}) imply $\rH^{\vb}_{c_1}(q)$ ($q=e^{2 \pi i z}$, $\mathrm{Im}(z)>0$) is the holomorphic part of a modular form of weight $3/2$ for $\Gamma_0(4)$ (up to replacing $q$ by $q^{-1}$ and up to an overall power of $q$ in front). In the case $c_1$ is even one only obtains modularity after correctly adding strictly semistable sheaves to the moduli space. Their contribution turns out to cancel the sum of divisors term\footnote{This follows from a computation of K.~Yoshioka as mentioned in \cite{VW}.}. Modularity of these generating functions is predicted by a conjecture from physics called the $S$-duality conjecture \cite{VW}.

We compute the analog of the generating function\footnote{For the precise definition, see Section 7.2. In our case we fix $c_1 \in \Z$ and $\lambda \in \{0, \ldots, d-1\}$, where $d = \gcd(a,b,c)$. Here $\lambda$ is, roughly speaking, the eigenvalue with respect to the overall stabilizer $\bmu_d$.} $\rH^{\vb}_{c_1,\lambda}(q)$ for \emph{any} weighted projective plane $\PP(a,b,c)$ (Corollary \ref{unrefined}). Moreover, in the case of $a,b,c \leq 2$ we show $\rH^{\vb}_{c_1,\lambda}(q)$ can again be expressed in terms of $H(\Delta)$ and $\sigma_0(n)$.
\begin{theorem} \label{intro}
For $\PP(1,1,2)$ the generating function $\rH^{\vb}_{c_1,0}(q)$ equals $q^{\frac{1}{4}c_{1}^{2}+\frac{5}{2}c_1+6}$ times
$$
 \sum_{n=1}^{\infty} 2H(8n-1) q^{\frac{1}{4}-2n}
$$
when $c_1$ is odd and 
$$
\sum_{n=1}^{\infty} \Big(H(4n) + 2H(n) - \frac{1}{2} \sigma_0(n) \Big) q^{-n} - \sum_{n=1}^{\infty} \sigma_0(n) q^{-4n}
$$
when $c_1$ is even. 

For $\PP(1,2,2)$ the generating function $\rH^{\vb}_{c_1,0}(q)$ equals $q^{\frac{1}{8} c_{1}^{2}+\frac{3}{2}c_1+\frac{17}{4}}$ times
$$
\sum_{n=1}^{\infty} H(8n-1) q^{\frac{1}{8}-n}
$$
when $c_1$ is odd and $q^{\frac{1}{8} c_{1}^{2}+\frac{3}{2}c_1+4}$ times
$$
q^{\frac{1}{2}} \sum_{n=1}^{\infty} 3H(4n-1) q^{\frac{1}{2}- 2n} + \sum_{n=1}^{\infty} 3\Big(H(4n) - \frac{1}{2} \sigma_0(n) \Big) q^{-2n}
$$
when $c_1 \equiv 0 \ \mathrm{mod} \ 4$ and $q^{\frac{1}{8} c_{1}^{2}+\frac{3}{2}c_1+4}$ times
$$
\sum_{n=1}^{\infty} 3H(4n-1) q^{\frac{1}{2}- 2n} + q^{\frac{1}{2}} \sum_{n=1}^{\infty} 3\Big(H(4n) - \frac{1}{2} \sigma_0(n) \Big) q^{-2n}
$$
when $c_1 \equiv 2 \ \mathrm{mod} \ 4$.

For $\PP(2,2,2)$ the generating function $\rH^{\vb}_{c_1,\lambda}(q)$ is zero unless $c_1$ is even, so assume this is the case. For $\lambda$ even, $\rH^{\vb}_{c_1,\lambda}(q)$ is equal to the generating function $\rH^{\vb}_{\frac{c_1}{2}}(q)$ of Theorem \ref{Klyachko}. For $\lambda$ odd, $\rH^{\vb}_{c_1,\lambda}(q)$ is equal to the generating function $\rH^{\vb}_{\frac{c_1}{2}+1}(q)$ of Theorem \ref{Klyachko}.

In each of these cases, when the expression for $\rH^{\vb}_{c_1,\lambda}(q)$ does \emph{not} involve the sum of divisors function there are no strictly semistable locally free sheaves in the moduli spaces and $\rH^{\vb}_{c_1,\lambda}(q)$ ($q=e^{2 \pi i z}$, $\mathrm{Im}(z)>0$) is the holomorphic part of a modular form of weight $\frac{3}{2}$ (up to replacing $q$ by $q^{-1}$ or $q^{-\frac{1}{2}}$ and up to an overall power of $q$ in front).
\end{theorem}


We conjecture that in the absence of strictly semistable locally free sheaves in the moduli space, the generating function $\rH^{\vb}_{c_1,\lambda}(q)$ computed in Corollary \ref{unrefined} always has modular properties. 
In Section 7.2 we also give a general expression for the generating function of Euler characteristics of moduli spaces of stable locally free sheaves on any $\PP(a,b,c)$ which sums over $K$-group classes instead of holomorphic Euler characteristic (Theorem \ref{mainrank2}). These generating functions depend on more than one variable and are \emph{refinements} of $\rH^{\vb}_{c_1,\lambda}(q)$. It would be interesting to study their modular properties. \\

\noindent \textbf{Convention.} Throughout this paper we fix $\PP := \PP(a,b,c)$ with $a,b,c$ \emph{arbitrary} positive integers. \\

\noindent \textbf{Acknowledgments.} We thank Paul Johnson and Benjamin Young for useful discussions on counting of coloured partitions. We also thank Jan Manschot and Richard Thomas for helpful discussions. 

A.~G.~was partially supported by NSF grant DMS-1406788. M.~K.~was supported by EPSRC grant EP/G06170X/1, ``Applied derived categories''.

\section{$G$-equivariant sheaves on DM stacks with $G$-action}

Let $\X$ be a DM stack\footnote{All our stacks and schemes are defined over $\C$. 
}. A (quasi-)coherent sheaf $\F$ on $\X$ is determined by specifying a (quasi-)coherent sheaf $\F_U$ over each atlas $U \rightarrow \X$ together with compatibility isomorphisms between the $\F_U$ \cite[Def.~7.18]{Vis}. We are interested in the case $\X$ carries the action of an affine algebraic group $G$ and $\F$ has a $G$-equivariant structure. Note that the affine algebraic group $G$ is taken to be a variety so is non-stacky. 
\begin{definition} \cite[Def.~1.3, 4.3]{Rom} \label{def:G-action} 
Let $\X$ be a DM stack and $G$ an affine algebraic group with multiplication $\mu : G \times G \rightarrow G$, inverse map $\iota : G \rightarrow G$ and identity $e : \Spec \C \rightarrow G. $ A \emph{$G$-action} on $\X$ is a morphism $\bs{\sigma} : G \times \X \rightarrow \X$ together with 2-isomorphisms $\bs{\alpha}$ and $\bs{a}$ 
\[
\xymatrix
{
G \times G \times \X \ar[r]^-{\mu \times \bs{\mathrm{id}}_\X} \ar[d]_{\mathrm{id}_G \times \bs{\sigma}}  & G \times \X \ar[d]^{\bs{\sigma}} & &   \Spec \C\times  \X \ar[r]^-{e \times \bs{\mathrm{id}}_\X} \ar@{=}[dr] & G \times \X \ar[d]^{\bs{\sigma}} \ar@2+<-10mm,-7mm>^-{\bs{a}}  \\
G \times \X \ar[r]_{\bs{\sigma}} \ar@{=>}[ur]^{\bs{\alpha}}  & \X & & & \X,
}
\]
where the 2-isomorphisms $\bs{\alpha}$, $\bs{a}$ satisfy some natural compatibility conditions (see \cite[Def.~1.3]{Rom}).

Next, let $\F$ be a quasi-coherent sheaf on $\X$. Denote by $\bs{p}_2 : G \times \X \rightarrow \X$ and $\bs{p}_{23} : G \times G \times \X \rightarrow G \times \X$ projection on the second and last two factors. A \emph{$G$-equivariant structure} on $\F$ is an isomorphism $\Phi : \bs{\sigma}^{*} \F \rightarrow \bs{p}^{*}_{2} \F$ satisfying the cocycle identity 
\[
 \bs{p}_{23}^* \Phi \circ (\mathrm{id}_G \times \bs{\sigma})^* \Phi=(\mu \times \bs{\mathrm{id}}_\X)^* \Phi.
\]
When $\Phi$ is a $G$-equivariant structure on $\F$, we refer to the pair $(\F,\Phi)$ as a \emph{$G$-equivariant sheaf}.  A \emph{$G$-equivariant morphism} between $G$-equivariant sheaves $(\F,\Phi)$, $(\G,\Psi)$ is a morphism $\theta : \F \rightarrow \G$ satisfying $\bs{p}^{*}_{2}\theta \circ \Phi = \Psi \circ \bs{\sigma}^{*}\theta$. \hfill $\oslash$
\end{definition}

We leave various straight-forward notions such as ``$G$-equivariant morphisms between DM stacks with $G$-action'', ``$G$-invariant open substacks'' and ``pull-back of $G$-equivariant sheaves along $G$-equivariant morphisms'' to the reader. 

Let $\X$ be a DM stack with $G$-action and $\{\U_i\}$ a cover by $G$-invariant open substacks. We want to think of $G$-equivariant quasi-coherent sheaves on $\X$ as $G$-equivariant quasi-coherent sheaves on the $\U_i$ together with gluing conditions. The idea is that in applications, $G$-equivariant quasi-coherent sheaves on the $\U_i$ are much easier to describe, e.g.~when the $\U_i$ are quotient stacks. We use the notation $|_i$, $|_{ij}$, $|_{ijk}$ for restriction $|_{\U_i}$, $|_{\U_{ij}}$, $|_{\U_{ijk}}$, where $\U_{ij}:=\U_i \times_\X \U_j$ and $\U_{ijk}:=\U_i \times_\X \U_j \times_\X \U_k$. Using the definition of a quasi-coherent sheaf on a DM stack \cite[Def.~7.18]{Vis} and \cite[Exc.~II.1.22]{Har} on atlases, gives:
\begin{proposition} \label{glue}
Let $\X$ be a DM stack with $G$-action and let $\{\U_i\}$ be a cover by $G$-invariant open substacks. Let $\{(\F_i,\Phi_i)\}$ be a collection of $G$-equivariant quasi-coherent sheaves on the $\U_i$ and assume there are $G$-equivariant isomorphisms $\phi_{ij} : \F_i|_{ij} \rightarrow \F_j|_{ij}$ satisfying $\phi_{ii} = \mathrm{id}$ and the cocycle identity $\phi_{ik}|_{ijk} = \phi_{jk}|_{ijk} \circ \phi_{ij} |_{ijk}$. Then there exists a unique $G$-equivariant quasi-coherent sheaf $(\F,\Phi)$ on $\X$ together with $G$-equivariant isomorphisms $\phi_{i} : \F|_i \rightarrow \F_i$ satisfying $\phi_j|_{ij} = \phi_{ij} \circ \phi_i|_{ij} $.
\end{proposition}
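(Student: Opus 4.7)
The plan is to reduce to the standard descent for quasi-coherent sheaves on DM stacks, and then bootstrap to incorporate the $G$-equivariant structure. First, forgetting all equivariant data, apply the gluing result \cite[Def.~7.18]{Vis}, \cite[Exc.~II.1.22]{Har} to $(\F_i, \phi_{ij})$ to produce a quasi-coherent sheaf $\F$ on $\X$ together with isomorphisms $\phi_i : \F|_i \to \F_i$ such that $\phi_j|_{ij} = \phi_{ij} \circ \phi_i|_{ij}$. Uniqueness of $(\F, \{\phi_i\})$ as a non-equivariant datum follows from this descent.

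For the equivariant structure, the key point is that since each $\U_i$ is $G$-invariant, one has $\bs{\sigma}^{-1}(\U_i) = \bs{p}_2^{-1}(\U_i) = G \times \U_i$, so that $\{G \times \U_i\}$ forms an open cover of $G \times \X$ with pairwise intersections $G \times \U_{ij}$. Transport $\Phi_i$ along $\phi_i$ to an isomorphism
\[
\Psi_i := (\bs{p}_2^* \phi_i)^{-1} \circ \Phi_i \circ \bs{\sigma}^* \phi_i : \bs{\sigma}^* \F|_{G \times \U_i} \To \bs{p}_2^* \F|_{G \times \U_i}.
\]
The $G$-equivariance of $\phi_{ij}$, namely $\bs{p}_2^* \phi_{ij} \circ \Phi_i|_{G \times \U_{ij}} = \Phi_j|_{G \times \U_{ij}} \circ \bs{\sigma}^* \phi_{ij}$, combined with the cocycle relation $\phi_j|_{ij} = \phi_{ij} \circ \phi_i|_{ij}$, is exactly what is needed to conclude $\Psi_i|_{G \times \U_{ij}} = \Psi_j|_{G \times \U_{ij}}$. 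Descent for morphisms of quasi-coherent sheaves on the cover $\{G \times \U_i\}$ then yields a unique isomorphism $\Phi : \bs{\sigma}^* \F \to \bs{p}_2^* \F$ restricting to each $\Psi_i$.

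It remains to check the cocycle identity for $\Phi$. This is an equality of morphisms on $G \times G \times \X$, which by the same invariance argument is covered by $\{G \times G \times \U_i\}$; on each piece, both sides restrict to the corresponding identity for $\Phi_i$, which holds by hypothesis. Uniqueness of the full equivariant datum $(\F, \Phi, \{\phi_i\})$ is obtained by chaining the uniqueness in the two descent steps. The main technical obstacle is the bookkeeping in the previous paragraph: one has to unwind carefully that the $G$-equivariance of $\phi_{ij}$ matches precisely the agreement of $\Psi_i$ and $\Psi_j$ on $G \times \U_{ij}$, keeping track of several pullbacks along $\bs{\sigma}$ and $\bs{p}_2$. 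Once this is verified, the entire statement reduces to two successive applications of descent.
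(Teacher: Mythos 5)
Your proof is correct and follows essentially the same route the paper takes: the paper derives the proposition in one line from the descent of quasi-coherent sheaves on atlases (\cite[Def.~7.18]{Vis}, \cite[Exc.~II.1.22]{Har}), and your two-step argument --- first gluing the underlying sheaf, then gluing the equivariant isomorphisms $\Psi_i$ over the cover $\{G\times\U_i\}$ of $G\times\X$ and checking the cocycle identity locally on $G\times G\times\X$ --- is exactly the expansion of that derivation. The verification that $G$-equivariance of $\phi_{ij}$ gives $\Psi_i|_{G\times\U_{ij}}=\Psi_j|_{G\times\U_{ij}}$ is the right key computation, and it goes through as you describe.
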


Let $H$ be an affine algebraic group acting on a scheme $X$ and assume the stabilizers of the $\C$-points are reduced and finite. Then $\X = [X/H]$ is a DM stack \cite[Ex.~7.17]{Vis} and we refer to such stacks as quotient DM stacks. For later use, it is more convenient to work with the \emph{groupoid presentation} \cite[Sect.~7]{Vis} of a quotient stack $[X/H]$
$$\xymatrix{H\times X \ar[r]_-{\tau} \ar@<1ex>[r]^-{p_2} & X,}$$ where $p_2$ (the source map of the groupoid) is projection to the first factor and $\tau$ (the target map of the groupoid) is the given action of $H$ on $X$. We suppress notation of the other three structure morphisms: composition, unit, inverse (see \cite[Sect.~7]{Vis} and reference therein). 

\begin{proposition} \cite[Ex.~7.21]{Vis} \label{prop:equivalence} The category of quasi-coherent sheaves on a quotient DM stack $[X/H]$ is equivalent to the category of $H$-equivariant quasi-coherent sheaves on $X$. 
\end{proposition}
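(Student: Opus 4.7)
The plan is to exploit the groupoid presentation $H\times X \rightrightarrows X$ and reinterpret descent data along the atlas $X\to[X/H]$ as an $H$-equivariant structure in the sense of Definition~\ref{def:G-action}.

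First, I would construct the functor from quasi-coherent sheaves on $[X/H]$ to $H$-equivariant quasi-coherent sheaves on $X$. Given a quasi-coherent sheaf $\ccH$ on $[X/H]$, let $q:X\to[X/H]$ be the quotient atlas and set $\F:=q^{*}\ccH$. Since $q\comp p_2 \cong q\comp \tau$ via the canonical 2-isomorphism built into the groupoid presentation, pulling back the definition of a quasi-coherent sheaf on the DM stack \cite[Def.~7.18]{Vis} produces a canonical isomorphism $\Phi:\tau^{*}\F \rightarrow p_2^{*}\F$ on $H\times X$. I would verify that the compatibility isomorphisms required in the definition of $\ccH$ (over the double fibre product $H\times H \times X$) translate, after unwinding the groupoid multiplication $\mu\times\id_X$, precisely into the cocycle condition $p_{23}^{*}\Phi \circ (\id_H \times \tau)^{*}\Phi = (\mu\times\id_X)^{*}\Phi$; similarly, the unit axiom for descent recovers $e^{*}\Phi=\id$. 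Thus $(\F,\Phi)$ is $H$-equivariant, and a morphism $\ccH\to\ccH'$ of sheaves on $[X/H]$ pulls back to an $H$-equivariant morphism by naturality.

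Next I would build a quasi-inverse. Given $(\F,\Phi)$ with $\F$ quasi-coherent on $X$, I want to produce a quasi-coherent sheaf on $[X/H]$. One can either cite general descent along smooth (even \'etale for free actions, but in general smooth) surjections — $q:X\to[X/H]$ is an fpqc atlas, and the pair $(\F,\Phi)$ is effective descent data for this atlas with respect to the groupoid $H\times X\rightrightarrows X$ — or, alternatively, apply Proposition~\ref{glue} to the trivial cover $\{[X/H]\}$ by noting that a single $H$-equivariant sheaf on the atlas already furnishes the data needed to glue. The cocycle condition on $\Phi$ is exactly what makes the would-be sheaf well-defined on objects of the stack $[X/H]$ over arbitrary test schemes $T$ (a $T$-point is an $H$-torsor $P\to T$ together with an $H$-equivariant map $P\to X$, and one sets $\ccH_T$ to be the descent of the pullback of $\F$ from $X$ to $P$ along $P\to T$, which is legal precisely because $\Phi$ gives $\F|_P$ an $H$-equivariant structure).

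Finally, I would check that the two constructions are mutually quasi-inverse. Starting from $\ccH$, the sheaf associated to $(q^{*}\ccH, \Phi_{\mathrm{can}})$ is canonically isomorphic to $\ccH$ because both are obtained by the same descent procedure; starting from $(\F,\Phi)$, pulling the descended sheaf back along $q$ reproduces $\F$ together with its equivariant structure, again by construction. Fully faithfulness on morphisms is formal: a morphism on $[X/H]$ is by definition a compatible family of morphisms of the local sheaves, and on the atlas $X$ the compatibility over $H\times X$ is exactly the $H$-equivariance condition $p_2^{*}\theta\circ\Phi = \Psi\circ\tau^{*}\theta$.

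The main obstacle is bookkeeping rather than conceptual: one must patiently match the groupoid axioms (associativity of multiplication, unit, compatibility with $p_2$ and $\tau$) to the cocycle and unit identities in Definition~\ref{def:G-action}, and verify that the two 2-isomorphisms $\bs{\alpha}$ and $\bs{a}$ there agree with the canonical ones coming from the quotient presentation. Once this translation is in place, the equivalence is automatic from descent along the atlas $q:X\to[X/H]$.
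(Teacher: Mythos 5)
The paper does not actually prove this proposition: it is quoted verbatim from Vistoli \cite[Ex.~7.21]{Vis}, so there is no in-text argument to compare against. Your proposal is the standard descent proof of that cited result — identify descent data for the atlas $q:X\to[X/H]$ relative to the groupoid $H\times X\rightrightarrows X$ with an $H$-equivariant structure, and then invoke effectivity of descent for quasi-coherent sheaves along the fpqc surjection $q$ — and it is correct in outline; this is essentially the content of Vistoli's example. Two small caveats. First, your suggested shortcut of ``applying Proposition~\ref{glue} to the trivial cover'' does not work: that proposition concerns covers by \emph{open} substacks and is itself stated in terms of the notion of quasi-coherent sheaf on a stack that you are trying to unwind, whereas the atlas $q$ is a smooth surjection, not an open immersion, so you genuinely need descent here. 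Second, your parenthetical about $q$ being étale for free actions is off when $\dim H>0$: for $H$ an affine algebraic group acting with finite reduced stabilizers, $q$ is smooth of relative dimension $\dim H$ and is étale only when $H$ is finite; this does not affect the argument, since descent for quasi-coherent sheaves holds along any fpqc morphism.
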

It is easy to generalise this to the $G$-equivariant setting. We start with the following lemma.
\begin{lemma} \label{lem:G-action}
Let $\X = [X/H]$ be a quotient DM stacks. Any $G$-action on $X$ commuting with $H$ induces a canonical $G$-action on $\X$. 
\end{lemma}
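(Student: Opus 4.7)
The plan is to use the natural groupoid presentation $H \times X \rra X$ of $\X = [X/H]$ and to lift the given $G$-action on $X$ to a strict $G$-action on this groupoid. A strict action on a groupoid presentation, meaning one compatible on the nose with every structure morphism rather than merely up to isomorphism, descends to a $G$-action on the associated stack in the sense of Definition~\ref{def:G-action} with the 2-isomorphisms $\bs{\alpha}$ and $\bs{a}$ taken to be the identity; the compatibilities of \cite[Def.~1.3]{Rom} are then automatic. Hence it suffices to produce such a lift, and canonicity of the induced action on $\X$ will follow from canonicity of the construction at groupoid level.

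Let $\sigma_X : G \times X \to X$ denote the given $G$-action on $X$. On the morphism scheme $H \times X$ I define
\[
\sigma_{H \times X} : G \times (H \times X) \To H \times X, \qquad (g, h, x) \longmapsto \bigl(h, \sigma_X(g, x)\bigr),
\]
letting $G$ act only on the $X$-factor. This is associative and unital on the nose, and compatibility with the source map $p_2(h,x) = x$ is immediate from $p_2 \circ \sigma_{H\times X} = \sigma_X \circ (\id_G \times p_2)$.

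The essential compatibility is with the target map $\tau(h,x) = h \cdot x$, for which
\[
\tau\bigl(h, \sigma_X(g,x)\bigr) = h \cdot (g \cdot x) = g \cdot (h \cdot x) = \sigma_X\bigl(g, \tau(h,x)\bigr),
\]
and the middle equality is exactly the hypothesis that the $G$- and $H$-actions on $X$ commute---this is the only place the commutativity assumption enters. The remaining structure morphisms of the groupoid, namely the unit $x \mapsto (e_H, x)$, the inverse $(h,x) \mapsto (h^{-1}, h \cdot x)$, and the composition $\bigl((h', h\cdot x),(h,x)\bigr) \mapsto (h'h, x)$, are $G$-equivariant by direct check using the same two identities, so I do not expect any serious obstacle. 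The entire content of the lemma is that commutativity of the two actions on $X$ is precisely what is required for the groupoid target map to be $G$-equivariant; descent from strict groupoid actions to stack actions with identity 2-isomorphisms is standard.
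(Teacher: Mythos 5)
Your proof is correct and follows essentially the same route as the paper: the map you define on $H\times X$, namely $(g,h,x)\mapsto(h,\sigma_X(g,x))$, is exactly the paper's $\sigma_1=p_2\times(\sigma\circ p_{13})$, and in both arguments the commutativity hypothesis enters only in checking compatibility with the target map $\tau$, after which the strict groupoid-level action descends to a $G$-action on $[X/H]$ with $\bs{\alpha}$ and $\bs{a}$ the identity.
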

\begin{proof}
Let $\sigma: G\times X\to X$ be the $G$-action on $X$. The stack $G\times \X$ admits a groupoid presentation $$\xymatrix{G \times H\times X \ar[r]_-{\id_G \times \tau} \ar@<1ex>[r]^-{\id_G \times p_2} & G\times X,}$$ which is induced from the natural groupoid presentation of $\X$. The other structure maps are the obvious maps.
Let $\sigma_0 := \sigma$ and $$\sigma_1:=p_2 \times (\sigma \circ p_{13}):G\times H \times X \to H\times X.$$ The commutativity of the actions of $H$ and $G$ on $X$ implies that $(\sigma_1,\sigma_0)$ is  a morphism of groupoids $$\xymatrix{G \times H\times X \ar[d]_{\sigma_1} \ar[r]_-{\mathrm{id}_G \times \tau} \ar@<1ex>[r]^-{\mathrm{id}_G \times p_2} & G\times X \ar[d]^{\sigma_0} \\ H\times X \ar[r]_-{\tau} \ar@<1ex>[r]^-{p_2} & X,}$$ which gives rise to the morphism between the associated stacks. It is straightforward to check that this morphism indeed defines a $G$-action on $\X$ as in Definition \ref{def:G-action} with $\bs{\alpha}$ and $\bs{a}$ both taken to be the identity. 
\end{proof}

Given a scheme $X$ with commuting $G$- and $H$-actions and a quasi-coherent sheaf $\F$ on $X$, we can consider commuting $G$- and $H$-equivariant structures $\Phi$ and $\Psi$ on $\F$. Denoting the actions by $\sigma : G \times X \rightarrow X$, $\tau : H \times X \rightarrow X$ and projections by $p_{13} : G \times H \times X \rightarrow G \times X$ and $p_{23} : G \times H \times X \rightarrow H \times X$, this means
\begin{equation} \label{equ:GHequiv}
p_{13}^* \Phi \circ (\mathrm{id}_H \times \sigma)^* \Psi =  p_{23}^* \Psi \circ (\mathrm{id}_G \times \tau)^* \Phi.  
\end{equation}
Triples $(\F,\Phi,\Psi)$ with $\Phi$ and $\Psi$ commuting can be made into a category by considering morphisms which intertwine both the $G$- and $H$-equivariant structure. 
\begin{proposition}
Let  $\X = [X/H]$ be a quotient DM stack with $G$-action coming from a $G$-action on $X$ commuting with $H$. Then the category of $G$-equivariant quasi-coherent sheaves on $\X$ is equivalent to the category of quasi-coherent sheaves on $X$ with commuting $G$- and $H$-equivariant structures. 
\end{proposition}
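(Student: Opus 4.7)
The plan is to leverage Proposition \ref{prop:equivalence} three times: once on $\X$, once on $G\times \X$, and once on $G\times G\times \X$, and then match the $G$-equivariance data on the stack side with the commutation relation \eqref{equ:GHequiv} on the atlas side.

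\textbf{Step 1: Translate sheaves.} By Proposition \ref{prop:equivalence} a quasi-coherent sheaf $\F_\X$ on $\X=[X/H]$ is the same as a pair $(\F,\Psi)$ where $\F$ is a quasi-coherent sheaf on $X$ and $\Psi : \tau^*\F \to p_2^*\F$ is an $H$-equivariant structure. From Lemma \ref{lem:G-action}, the stack $G\times \X$ is naturally the quotient $[(G\times X)/H]$ for the $H$-action $\id_G\times \tau$, and similarly for $G\times G\times \X$. Moreover, the morphisms $\bs{\sigma}$ and $\bs{p}_2 : G\times \X \to \X$ are induced at the level of atlases by the morphisms $\sigma$ and $p_2 : G\times X\to X$, both of which are $H$-equivariant because the $G$- and $H$-actions commute. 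Hence Proposition \ref{prop:equivalence} applied to $G\times \X$ gives an equivalence between quasi-coherent sheaves on $G\times \X$ and pairs $(\G,\Xi)$ on $G\times X$ with $H$-equivariant structure $\Xi$; under this equivalence $\bs{\sigma}^*\F_\X$ corresponds to $\sigma^*\F$ with the $H$-equivariant structure pulled back from $\Psi$, and similarly for $\bs{p}_2^*\F_\X$.

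\textbf{Step 2: Translate the $G$-equivariant structure.} A $G$-equivariant structure $\Phi_\X : \bs{\sigma}^*\F_\X \to \bs{p}_2^*\F_\X$ is a morphism of quasi-coherent sheaves on $G\times \X$. By the equivalence above it corresponds to a morphism $\Phi : \sigma^*\F \to p_2^*\F$ of quasi-coherent sheaves on $G\times X$ which intertwines the two induced $H$-equivariant structures. Unwinding the pulled-back structures (writing out both $\Psi$-pullbacks on $G\times H\times X$) shows that this intertwining condition is exactly the identity
\[
p_{13}^*\Phi \circ (\id_H\times \sigma)^*\Psi = p_{23}^*\Psi \circ (\id_G\times\tau)^*\Phi,
\]
i.e.\ \eqref{equ:GHequiv}. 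Checking carefully that the equivalence carries isomorphisms to isomorphisms yields that $\Phi$ is itself an isomorphism.

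\textbf{Step 3: Cocycle and morphisms.} Applying Proposition \ref{prop:equivalence} one more time to $G\times G\times \X = [(G\times G\times X)/H]$, the cocycle identity for $\Phi_\X$ on $G\times G\times \X$ from Definition \ref{def:G-action} translates term by term into the usual cocycle identity
\[
\bs{p}_{23}^*\Phi \circ (\id_G\times \sigma)^*\Phi = (\mu\times \id_X)^*\Phi
\]
for the $G$-equivariant structure $\Phi$ on $\F$ at the atlas level. A $G$-equivariant morphism of $G$-equivariant sheaves on $\X$ is a morphism of the underlying sheaves on $\X$ intertwining the $G$-structures; by Proposition \ref{prop:equivalence} this is the same as a morphism $\F\to\F'$ on $X$ intertwining both $\Psi,\Psi'$ and $\Phi,\Phi'$. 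This defines functors in both directions which are mutually quasi-inverse.

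\textbf{Main obstacle.} The non-trivial part is Step 2: one must verify that the equivalence of Proposition \ref{prop:equivalence}, applied to $G\times \X$ with its induced quotient-stack structure from Lemma \ref{lem:G-action}, sends the condition ``$\Phi_\X$ is a morphism on $G\times \X$'' precisely to the commutation relation \eqref{equ:GHequiv}. This is a matter of chasing through the definition of a morphism of quasi-coherent sheaves on the quotient groupoid $[(G\times X)/H]$ and identifying the two ways of pulling $\Phi$ up to $G\times H\times X$ with the two sides of \eqref{equ:GHequiv}, but requires care because $H$ acts on $G\times X$ only via the second factor.
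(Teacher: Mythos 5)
Your proposal is correct and follows essentially the same route as the paper: apply Proposition \ref{prop:equivalence} to $\X$ and to $G\times \X$ (presented as $[(G\times X)/H]$ via Lemma \ref{lem:G-action}), identify the descent condition for $\Phi$ to define a morphism $\bs{\sigma}^*\F_\X\to\bs{p}_2^*\F_\X$ with the commutation relation (\ref{equ:GHequiv}), and translate the cocycle identity at the atlas level. The paper packages exactly this as the data $(\Psi, A, B, \Phi)$ with $A=\sigma_1^*\Psi$, $B=p_1^*\Psi$ and the intertwining condition $s'^*\Phi\circ A = B\circ t'^*\Phi$, which is your Step 2.
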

\begin{proof} The proof is a straightforward application of the equivalence of categories in Proposition \ref{prop:equivalence}. We denote the $G$-action on $X$ by $\sigma$ and the $H$-action on $X$ by $\tau$ and use $p_i$, $p_{ij}$ for the various projections as before. 

Let $\bs{\sigma} ,\bs{p}: G\times \X \to \X$ be the $G$-action and projection. These two morphisms of stacks are obtained from the corresponding morphisms $(p_1, p_0 ) = (p_{23}, p_2)$ and $$(\sigma_1, \sigma_0) = (p_2 \times (\sigma \circ p_{13}),\sigma)$$ of the groupoid presentations. I.e.~ 
$$\xymatrix{G \times H\times X \ar[d]_{\sigma_1} \ar@<1ex>[d]^-{p_1} \ar[r]_-{t'} \ar@<1ex>[r]^-{s'} & G\times X \ar[d]_{\sigma_0} \ar@<1ex>[d]^-{p_0}\\ H\times X \ar[r]_-{t} \ar@<1ex>[r]^-{s} & X,}$$ where $s = p_2$, $t = \tau$, $s' = \mathrm{id}_G \times p_2$ and $t' = \mathrm{id}_G \times \tau$. By Proposition \ref{prop:equivalence}, giving a $G$-equivariant quasi-coherent sheaf $\F$ on $\X$ is equivalent to giving a quasi-coherent sheaf $\F_X$ on $X$ together with four isomorphisms  $$\Psi: t^*\F_X \to s^*\F_X, \;\;\; A: t^{\prime *}\sigma_0^*\F_X \to s^{\prime *}\sigma_0^*\F_X,$$ 
$$B: t^{\prime *}p_0^*\F_X \to s^{\prime *}p_0^*\F_X, \;\;\;\Phi:  \sigma_0^*\F_X\to p_0^*\F_X ,$$ such that: 
\begin{enumerate} [i)]
\item $\Psi$ satisfies the cocycle identity, 
\item $A = \sigma_1^*\Psi$, $B = p_1^*\Psi$,  
\item $s^{\prime *}\Phi \circ A =B \circ t^{\prime *} \Phi$,
\item $\Phi$ satisfies the cocycle identity.  
\end{enumerate}
The isomorphism $\Psi$ (with cocycle condition) defines an $H$-equivariant structure on $\F_X$ and hence a quasi-coherent sheaf $\F$ on $\X$. The sheaves $\sigma_0^*\F_X$ and $p_0^*\F_X$ together with the isomorphisms $A$ and $B$ (satisfying condition ii)) correspond to the quasi-coherent sheaves $\bs{\sigma}^*\F$ and $\bs{p}^*\F$ on $G\times \X$. The isomorphism $\Phi$ with condition iii) corresponds to an isomorphism $\bs{\sigma}^*\F \cong \bs{p}^*\F$ on $G \times \X$. It satisfies the cocycle identity if and only if iv) holds. This defines a $G$-equivariant structure on $\F_X$. The proof follows by noting that condition iii) is equivalent to equation (\ref{equ:GHequiv}). 
\end{proof}

Consider the setup of the previous proposition. If in addition $X = \Spec R$ is affine, $G$ and $H$ are diagonalizable with character groups $X(G)$ and $X(H)$, then we get a further equivalence with the category of $R$-modules with $X(G) \times X(H)$-grading as follows. The global section functor $H^0(X,\cdot)$ gives an equivalence between the category of quasi-coherent sheaves on $X$ and the category $R$-modules. If a quasi-coherent sheaf $\F$ has in addition a $G$- and $H$-equivariant structure, then this can be used to define a linear $G$ and $H$-action on $H^0(X,\F)$ in the following way. Let $i_g : X \hookrightarrow G \times X$ be defined by inclusion of an element $g$ into $G$ and set $\Phi_g :=i_g^* \Phi$. Then for any $s \in H^0(X,\F)$, define $$g \cdot s := \Phi_{g^{-1}}((g^{-1})^* s) \in H^0(X,\F),$$ where $(g^{-1})^*s \in H^0(X,(g^{-1})^*\F)$ is the canonical lift (and similarly for $H$). Note that $\Phi$ and $\Psi$ commute, so the $G$- and $H$-action on $H^0(X,\F)$ commute. Since $G$ is diagonalizable, the module $H^0(X,\F)$ decomposes into $G$-eigenspaces 
\[
H^0(X,\F) = \bigoplus_{\chi \in X(G)} H^0(X,\F)_{\chi}.
\]
Each $H^0(X,\F)_m$ has an induced $H$-action. Since $H$ is diagonalizable, we get a further decomposition into eigenspaces
\[
H^0(X,\F) = \bigoplus_{\chi \in X(G)} \bigoplus_{\psi \in X(H)} H^0(X,\F)_{\chi,\psi}.
\]
For $H$ trivial, T.~Kaneyama \cite{Kan} proves $H^0(X,\cdot)$ gives an equivalence between the category of $G$-equivariant quasi-coherent sheaves on $X$ and the category of $X(G)$-graded $R$-modules (see also \cite[Prop.~2.31]{Per3}). It is not hard to show that in our current setting, $H^0(X,\cdot)$ gives an equivalence between the category of quasi-coherent sheaves on $X$ with commuting $G$- and $H$-equivariant structures and the category of $X(G) \times X(H)$-graded $R$-modules. We find it convenient in our applications to refer to the $X(H)$-grading as the \emph{fine grading}. Summarizing:
\begin{corollary} \label{grading}
Let  $\X = [(\Spec R)/H]$ be a quotient DM stack with $G$-action coming from a $G$-action on $\Spec R$ commuting with $H$. Assume $G$ and $H$ are diagonalizable. Then the category of quasi-coherent sheaves on $\X$ with commuting $G$- and $H$-equivariant structures is equivalent to the category of $R$-modules with an $X(G)$-grading and $X(H)$-fine grading. 
\end{corollary}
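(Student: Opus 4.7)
The plan is to compose two known categorical equivalences. By the previous proposition, giving a quasi-coherent sheaf on $\X$ with commuting $G$- and $H$-equivariant structures is the same as giving a quasi-coherent sheaf $\F$ on $X = \Spec A$ together with commuting equivariant structures $\Phi$ (for $G$) and $\Psi$ (for $H$), so I only need to translate the latter into the language of doubly graded $A$-modules. The second ingredient is Kaneyama's theorem (cited in the excerpt), which establishes that for a diagonalizable group $G$ acting on an affine scheme $X = \Spec A$, the global sections functor $H^0(X,\cdot)$ implements an equivalence between $G$-equivariant quasi-coherent sheaves on $X$ and $X(G)$-graded $A$-modules.

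First I would apply Kaneyama's equivalence to the $G$-equivariant structure alone: the sheaf $\F$ corresponds to an $A$-module $M := H^0(X,\F)$, and $\Phi$ endows $M$ with a linear $G$-action via $g \cdot s := \Phi_{g^{-1}}((g^{-1})^* s)$, yielding the weight decomposition $M = \bigoplus_{\chi \in X(G)} M_\chi$. The second $H$-equivariant structure $\Psi$ produces an analogous linear $H$-action on $M$, and the commutation relation \eqref{equ:GHequiv} directly implies that these two linear actions commute as endomorphisms of $M$. Consequently each $G$-weight space $M_\chi$ is $H$-stable, and since $H$ is diagonalizable we obtain a further decomposition $M_\chi = \bigoplus_{\psi \in X(H)} M_{\chi,\psi}$, producing the desired $X(G) \times X(H)$-bigrading.

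For the inverse direction, I would start with an $A$-module carrying a bigrading by $X(G) \times X(H)$, reconstruct the associated quasi-coherent sheaf $\F$ on $X$ by the usual $M \mapsto \widetilde M$ construction, and define $\Phi$ and $\Psi$ by inverting Kaneyama's equivalence applied separately to the $X(G)$- and $X(H)$-gradings. The compatibility of the two gradings (each summand $M_{\chi,\psi}$ being a simultaneous eigenspace) forces $\Phi$ and $\Psi$ to commute in the sense of \eqref{equ:GHequiv}, essentially because the commuting linear actions on $M$ reassemble, after taking inverses and pulling back, into commuting sheaf-level equivariant structures. Functoriality and the matching of morphisms is automatic: a module homomorphism respects the bigrading if and only if it intertwines the two linear actions, which by Kaneyama's equivalence happens if and only if the corresponding sheaf morphism intertwines both $\Phi$ and $\Psi$.

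The main obstacle is bookkeeping rather than conceptual difficulty: one must verify carefully that the cocycle condition \eqref{equ:GHequiv} on the sheaf side corresponds precisely to commutativity of the two induced linear actions on $M$, which requires unpacking Kaneyama's construction via pullback along the point inclusions $i_g : \Spec \C \hookrightarrow G$ and $i_h : \Spec \C \hookrightarrow H$. Once this point-wise translation is made, the rest of the argument is an immediate concatenation of the previous proposition with a two-fold application of Kaneyama's theorem.
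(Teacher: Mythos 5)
Your proposal is correct and takes essentially the same route as the paper: reduce via the preceding proposition to quasi-coherent sheaves on $\Spec A$ with commuting $G$- and $H$-equivariant structures, define the induced linear actions by $g\cdot s := \Phi_{g^{-1}}((g^{-1})^*s)$, and use diagonalizability together with Kaneyama's equivalence to pass to the $X(G)\times X(H)$-bigraded module. The paper itself only sketches this step (asserting the two-group extension of Kaneyama is ``not hard to show''), so your write-up is simply a more explicit version of the same argument.
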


This corollary is useful because in applications moduli of finitely generated $R$-modules with $X(G)$-grading and $X(H)$-fine grading can be explicitly described.

\section{Toric sheaves on affine toric DM stacks}

Let $\C^d$ be affine space with linear $\T = \C^{*d}$-action. Let $H$ be a finite abelian group. Suppose $\C^d$ also has an $H$-action and the $\T$ and $H$ actions commute. In this section, we describe toric sheaves, i.e.~$\T$-equivariant coherent sheaves, on quotient DM stacks $\X = [\C^d/H]$. This boils down to applying Corollary \ref{grading} and repackaging the grading data somewhat. We also discuss what torsion free and reflexive sheaves look like in this picture. 

Throughout this paper, we make the following notational conventions. Let $D$ be any diagonalizable group. Then $$D \cong \C^{*d} \times \bmu_{a_1} \times \cdots \times \bmu_{a_s},$$ where $\bmu_{a_i}$ is the group of $a_i$-th roots of unity and the $a_i$'s are prime powers. The numbers $d, a_i$ are unique with this property. The group operation of the latter group (and its factors) is written multiplicatively and we denote its elements by $\lambda = (\lambda_1, \ldots, \lambda_d, \mu_1, \ldots, \mu_s)$. We denote the character group of $D$ by $X(D)$. We get an isomorphism
\[
X(D) \cong \Z^d \oplus \Z_{a_1} \oplus \cdots \oplus \Z_{a_s}.
\]
The group operation of the group on the RHS (and its factors) is written additively and its elements are denoted by $m=(\ell_1, \ldots, \ell_d, l_1, \ldots, l_s)$. The isomorphism maps $m=(\ell_1, \ldots, \ell_d, l_1, \ldots, l_s)$ to the character 
\[
\chi(m) : D \longrightarrow \C^{*}, \ (\lambda_1, \ldots, \lambda_d,\mu_1, \ldots, \mu_s) \mapsto \lambda_{1}^{\ell_{1}} \cdots \lambda_{d}^{\ell_{d}} \mu_{1}^{l_{1}} \cdots \mu_{s}^{l_{s}}. 
\]
We often switch between additive notation $m,m', \ldots$, $+$ and multiplicative notation $\chi(m), \chi(m'), \ldots$, $\cdot$ for $X(D)$. With this notation $$\chi(m) \cdot \chi(m') = \chi(m+m').$$

\subsection{Non-degenerate torus actions.} 

We start with some elementary remarks on the action of $\T$ on $\C^d$.  
\begin{definition}
Let $\T$ act linearly\footnote{It is conjectured that \emph{any} $\T$-action on $\C^d$ is linear after a suitable change of coordinates (linearization conjecture). This has been proved for $d\leq 3$ \cite{Gut, KR1, KR2}.} on $\C^d$. Then there exist coordinates $x_1, \ldots, x_d$ and characters $\chi(m_1), \ldots, \chi(m_d)$ such that $\lambda \cdot x_i = \chi(m_i)(\lambda) x_i$ for all $i=1, \ldots, d$ and $\lambda \in \T$. If the $m_i$'s are dependent, then the action is said to be \emph{degenerate}. If the action is non-degenerate and the $m_i$ generate the lattice $X(\T)$, then the action is said to be \emph{primitive}. These notions do not depend on the choice of coordinates. \hfill $\oslash$
\end{definition}
In the case of smooth toric varieties, the opens in the cover $\{U_\sigma\}$ corresponding to the cones $\sigma$ of maximal dimension have primitive linear torus actions. In the case of smooth toric stacks, the opens in the cover $\{[\C^d / N(\sigma)]\}$ corresponding to cones $\sigma$ of maximal dimension can have a torus action induced by a non-degenerate but possible non-primitive linear torus action on $\C^d$. We are interested in such actions. 
\begin{definition} \label{linnondeg}
Let $\T$ act linearly and non-degenerately on $\C^d$. Suppose the action is written as 
\begin{equation} \label{action}
\lambda \cdot x_i = \chi(m_i)(\lambda) x_i.
\end{equation} 
Note that this choice of coordinates is unique\footnote{In the case of toric varieties, such a choice does not have to be made since $\T \subset \C^d$ and the action of $\T$ on $\C^d$ restricts to the canonical action $\T \times \T \rightarrow \T$.} up to scaling and reordering the $x_i$. The \emph{box} associated to the action is the subset $\cB_\T \subset X(\T)$ of all elements of the form $\sum_i q_i m_i \in X(\T)$ with $0 \leq q_1, \ldots, q_d  < 1$ rational. Note that $\cB_\T= 0$ if and only if the $\T$-action is primitive. We also denote an element $\sum_i q_i m_i \in\cB_\T$ by $(q_1, \ldots, q_d) \in \Q^d$. \hfill $\oslash$
\end{definition}
For $\C^d$ with non-degenerate linear $\T$-action, the weight spaces $H^0(\C^d,\O_{\C^d})_m$ are 0 or 1-dimensional. The collection of $m \in X(\T)$ for which $$H^0(\C^d,\O_{\C^d})_m \neq 0$$ forms a semigroup $S \subset X(\T)$. If the action is given as \eqref{action}, then $S$ is the cone spanned by $m_1, \ldots, m_d \in X(\T)$. The category of $\T$-equivariant quasi-coherent sheaves on $\C^d$ is equivalent to the category of $X(\T)$-graded modules (Corollary \ref{grading}). We repackage this data neatly in the same way Perling does in the case of toric varieties \cite{Per1}. 

Given a $\T$-equivariant quasi-coherent sheaf $\F$ on $\C^d$, let $$H^0(\C^d,\F) = \bigoplus_{m \in X(\T)} F(m)$$ be the corresponding $X(\T)$-graded module. This gives us a collection of vector spaces $\{F(m)\}_{m \in X(\T)}$. Multiplication by $x_i$ gives linear maps 
\begin{equation} \label{mult}
\chi_i(m) : F(m) \longrightarrow F(m+m_i),
\end{equation}
for all $m \in X(\T)$. These maps satisfy 
\begin{equation} \label{trans}
\chi_j(m+m_i) \circ \chi_i(m) = \chi_i(m+m_j) \circ \chi_j(m),
\end{equation}
for any $m \in X(\T)$ and $i,j=1, \ldots, d$. Abstractly, we refer to a collection of vector spaces $\{F(m)\}_{m \in X(\T)}$ and linear maps $\{\chi_i(m)\}_{m \in X(\T),i=1,\ldots,d}$ satisfying (\ref{trans}) as an \emph{$S$-family} $\hat{F}$:

\begin{definition}
Let $X(\T)$ be the character group of an algebraic torus $\T$ of dimension $d$. Suppose we are given $\Z$-independent elements $m_1, \ldots, m_d \in X(\T)$. An \emph{$S$-family} $\hat{F}$ consists of the following data: a collection of vector spaces $\{F(m)\}_{m \in X(\T)}$ and linear maps $\{\chi_i(m)\}_{m \in X(\T),i=1,\ldots,d}$ satisfying (\ref{trans}). (The letter $S$ stands for the semigroup given by the cone generated by $m_1, \ldots, m_d \in X(\T)$.) A \emph{morphism of $S$-families} $\hat{F}$, $\hat{G}$ is a family $\hat{\phi}$ of linear maps $$\{\phi(m) : F(m) \longrightarrow G(m)\}_{m \in X(\T)}$$ commuting with the $\chi_i(m)$'s. \hfill $\oslash$
\end{definition}

The $S$-family $\hat{F}$ associated to a $\T$-equivariant quasi-coherent sheaf $\F$ contains all the data of the graded module $H^0(\F)$. It is easy to see that the category of $\T$-equivariant quasi-coherent sheaves on $\C^d$ is equivalent to the category of $S$-families. The following is also obvious. 
\begin{proposition}
Let $\T$ act non-degenerately on $\C^d$. Then each $\T$-equivariant quasi-coherent sheaf $\F$ on $\C^d$ with corresponding $S$-family $\hat{F}$ decomposes equivariantly according to the box elements 
\[
\F \cong \bigoplus_{b \in \cB_\T} {}_{b}\F, \ \hat{F} \cong \bigoplus_{b \in \cB_\T} {}_{b}\hat{F}.
\]
\end{proposition}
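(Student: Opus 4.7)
\medskip

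\noindent\emph{Proof plan.}
The plan is to reduce the statement to the observation that the $X(\T)$-grading of $\hat F$ splits into cosets of the sublattice $L\subset X(\T)$ generated by $m_1,\dots,m_d$. Concretely, since $\T$ acts non-degeneratively, the weights $m_1,\dots,m_d$ are $\Z$-independent in the rank $d$ lattice $X(\T)$, hence span $X(\T)\otimes\Q$ and generate a sublattice $L=\Z m_1\oplus\cdots\oplus\Z m_d$ of finite index in $X(\T)$. First I would check that $\cB_\T$ is a complete set of coset representatives for $X(\T)/L$: every $m\in X(\T)$ can be written uniquely as $\sum r_i m_i$ with $r_i\in\Q$, and decomposing $r_i=n_i+q_i$ with $n_i\in\Z$ and $0\le q_i<1$ shows that $m=\sum n_i m_i + b$ with $b=\sum q_i m_i\in\cB_\T$ uniquely determined by the coset $m+L$.

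Next I would exploit this to split the $S$-family. Because the structure maps $\chi_i(m):F(m)\to F(m+m_i)$ shift the degree by $m_i\in L$, they preserve the coset of $m$ modulo $L$. For each $b\in\cB_\T$ define
\[
{}_b\hat F=\bigl(\{F(m)\}_{m\in b+L},\ \{\chi_i(m)\}_{m\in b+L,\,i=1,\dots,d}\bigr),
\]
which is manifestly an $S$-subfamily of $\hat F$. Since $X(\T)=\bigsqcup_{b\in\cB_\T}(b+L)$, the identity on each $F(m)$ assembles into an isomorphism $\hat F\cong\bigoplus_{b\in\cB_\T}{}_b\hat F$ of $S$-families.

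To pass from $S$-families to sheaves, I would invoke the equivalence of categories between $\T$-equivariant quasi-coherent sheaves on $\C^d$ and $S$-families (Corollary \ref{grading} together with the reformulation of graded $A$-modules as $S$-families). Under this equivalence the $A$-module $M=H^0(\C^d,\F)=\bigoplus_{m\in X(\T)}F(m)$ splits as $M=\bigoplus_{b\in\cB_\T}M_b$, where $M_b=\bigoplus_{m\in b+L}F(m)$ is an $X(\T)$-graded $A$-submodule (closed under multiplication by each $x_i$, which adds $m_i\in L$). Therefore the decomposition of $\hat F$ promotes to an equivariant decomposition $\F\cong\bigoplus_{b\in\cB_\T}{}_b\F$, and ${}_b\F$ is precisely the sheaf whose $S$-family is ${}_b\hat F$.

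There is really no hard step: the content is purely book-keeping, and the only point requiring care is verifying that $\cB_\T$ bijects with $X(\T)/L$, which follows from the $\Z$-independence hypothesis of the non-degenerate action. Everything else is formal.
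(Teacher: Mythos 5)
Your proposal is correct and follows essentially the same route as the paper: reduce to $S$-families via the equivalence of categories, observe that $\cB_\T$ indexes the cosets of the sublattice $L=\Z m_1\oplus\cdots\oplus\Z m_d$ in $X(\T)$, and define ${}_b\hat F$ as the subfamily supported on $b+L$, which is preserved by the maps $\chi_i$. The paper's proof is just a terser version of the same bookkeeping.
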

\begin{proof}
It suffices to show the statement for $S$-families. Take an element $b \in \cB_\T$ then define $_{b}\hat{F}$ to be the $S$-subfamily consisting of all vector spaces $F(m+b)$ where $m = \sum_i \l_i m_i$ for any $\ell_i \in \Z$. 
\end{proof}

\noindent \textbf{Summary.} A $\T$-equivariant quasi-coherent sheaf on affine space $\C^d$ with non-degenerate linear $\T$-action is just a family of vector spaces indexed by the lattice points of $X(\T)$ and (compatible linear) maps between them encoding the module structure. When the action is non-primitive, the sheaf decomposes according to the box elements $b \in \cB_\T$. \\

We end this section by mentioning what the $S$-family of a coherent, torsion free and reflexive sheaf looks like. Let $\F$ be a $\T$-equivariant quasi-coherent sheaf on $\C^d$ with $S$-family $\hat{F}$. Any $m \in X(\T)$ can be uniquely written as $$b+\sum_{i=1}^{d} \l_i m_i,$$ where $b \in \cB_T$, the $\l_i$ are integers and the $m_i$ are defined in \eqref{action}. We often write $${}_{b}F(\l_1, \ldots, \l_d):={}_{b}F(m).$$ Moreover, the element $b \in \cB_\T$ can be uniquely written as $\sum_i q_i m_i$ for rational numbers $0 \leq q_i <1$ and we also often write $${}_{(q_1, \ldots, q_d)}F(\l_1, \ldots, \l_d) := {}_{b}F(\l_1, \ldots, \l_d).$$ 
Similarly, the maps $\chi_i(b+m)$ (see \eqref{mult}) are denoted by $${}_{b}\chi_{i}(\l_1, \ldots, \l_d), \ {}_{(q_1, \ldots, q_d)}\chi_{i}(\l_1, \ldots, \l_d),$$ 
or simply $x_i$ when we suppress the domain.

The sheaf $\F$ is \emph{coherent} if and only if the corresponding module is finitely generated. In terms of $\hat{F}$ this means the following \cite[Def.~5.10, Prop.~5.11]{Per1}:
\begin{enumerate} [i)]
\item all vector spaces ${}_{b}F(\vec{\l})$ are finite dimensional,
\item for each $b \in \cB_\T$ and $\l_{1}, \ldots, \l_{d}$ sufficiently negative ${}_{b}F(\vec{\l}) = 0$,
\item for each $b \in \cB_\T$, there are only finitely many $\vec{L} \in \Z^d$ for which
\[
{}_{b}F(\vec{L}) \neq \mathrm{span}_{\C}\{x_{1}^{L_1 - \l_1} \cdots x_{d}^{L_d - \l_d} s \ | \ s \in {}_{b}F(\vec{\l}) \ \mathrm{and} \ L_i - \l_i \geq 0 \ \mathrm{not \ all \ zero}\}.
\]
\end{enumerate} 
We refer to $\T$-equivariant coherent sheaves as \emph{toric sheaves}. 

The sheaf $\F$ is \emph{torsion free} if and only if it is coherent and all maps $x_i$ are injective. This is proved in \cite[Prop.~5.13]{Per1} (or \cite[Prop.~2.8]{Koo1}, which also discusses \emph{pure} sheaves of lower dimension). Up to an equivalence of categories, this means we can take all $x_i$'s to be inclusion. Decomposing $\hat{F} \cong \bigoplus_{b \in \cB_\T} {}_{b}\hat{F}$, we conclude that a toric torsion free sheaf on $\C^d$ is specified by the following data:
\begin{enumerate} [i)]
\item finite-dimensional vector spaces denoted by ${}_{b}F(\vec{\infty})$ for all $b \in \cB_\T$, 
\item for each $b \in \cB_\T$: a multifiltration $\{{}_{b}F(\vec{\l})\}_{\vec{\l} \in \Z^d}$ of ${}_{b}F(\vec{\infty})$ such that $${}_{b}F(\vec{\l}) = {}_{b}F(\vec{\infty})$$ for sufficiently large $\vec{\l} \in \Z^d$.
\end{enumerate} 
The rank of such a torsion free sheaf is $$\sum_{b \in \cB_\T} \mathrm{dim}_{\C}({}_{b}F(\vec{\infty})).$$ 

How do we characterize the \emph{reflexive} sheaves among the torsion free sheaves? The following describe the $S$-families corresponding to toric reflexive sheaves. Suppose for each $b \in \cB_\T$ we are given a finite-dimensional vector space ${}_{b}V(\infty)$ (not all zero) and a filtration
\[
\cdots \subset {}_{b}V(\l-1) \subset {}_{b}V(\l) \subset {}_{b}V(\l+1) \subset \cdots,
\]
such that ${}_{b}V(\l) = 0$ for $\l$ sufficiently small and ${}_{b}V(\l) = {}_{b}V(\infty)$ for $\l$ sufficiently large. Set 
\begin{equation}
{}_{b}F(\l_1, \ldots, \l_d) = {}_{b}V(\l_1) \cap \cdots \cap {}_{b}V(\l_d) \subset {}_{b}V(\infty). \label{reflexive}
\end{equation}
Multi-filtrations obtained in this way exactly correspond to the $S$-families of toric reflexive sheaves \cite[Thm.~5.19]{Per1}.

\subsection{Toric sheaves on $[\C^d/H]$.} 

We now describe toric sheaves on the DM stack $[\C^d/H]$, where  $\T = \C^{*d}$ and $H$ is a finite abelian group, both acting on $\C^d$. We assume $\T$ acts linearly and non-degenerately (Definition \ref{linnondeg}) and the actions of $\T$ and $H$ commute. By Corollary \ref{grading}, the category of $\T$-equivariant sheaves on $[\C^d/H]$ is equivalent to the category of $H^0(\C^d,\O_{\C^d})$-modules with $X(\T)$-grading and $X(H)$-fine-grading. 

Next, we choose coordinates $x_i$ on $\C^d$ such that the $\T$-action is given by \eqref{action} $$\lambda \cdot x_i = \chi(m_i)(\lambda) x_i.$$ As mentioned before, this choice of coordinates is unique up to scaling and reordering the $x_i$. A $\T$-equivariant quasi-coherent sheaf $\F$ on $[\C^d/H]$ gives rise to the $H^0(\C^d,\O_{\C^d})$-module $H^0(\C^d,\F)$ with $X(\T)$-grading and $X(H)$-fine-grading. In turn, this provides us with an $S$-family $\hat{F}$ as described in Section 3.1. For each $m \in X(\T)$, the vector space $F(m)$ has an $X(H)$-grading
\[
F(m) = \bigoplus_{n \in X(H)} F(m)_n.
\]
Since the actions of $\T$ and $H$ commute, $H$ acts by $$h \cdot x_i = \chi(n_i)(h) x_i,$$ for a unique $n_i \in X(H)$. Each multiplication map $\chi_i(m)$ maps $F(\vec{\lambda})_n$ to $F(m+m_i)_{n+n_i}$. In other words, we have the following data:
\begin{enumerate} [i)]
\item a collection of vector spaces $\{F(m)_n\}_{m \in X(\T), n \in X(H)}$,
\item a collection of linear maps $$\{\chi_i(m) : F(m) \longrightarrow F(m+m_i)\}_{i=1, \ldots, d, m \in X(\T)}$$ satisfying
\begin{align*}
&\chi_i(m) : F(m)_n \longrightarrow F(m+m_i)_{n+n_i}, \ \chi_j(m+m_i) \circ \chi_i(m) = \chi_i(m+m_j) \circ \chi_j(m),
\end{align*}
for all $i,j = 1, \ldots, d$, $m \in X(\T)$ and $n \in X(H)$.
\end{enumerate}
We refer to this data $\hat{F}$ as a \emph{stacky $S$-family}. There is an obvious notion of morphism between stacky $S$-families: a morphism of $S$-families respecting the fine-grading. This repackaging can be summarized in the following proposition.
\begin{proposition} \label{stackyfam}
Let $\T = \C^{*d}$ and $H$ be a finite abelian group acting on $\C^d$. Assume $\T$ acts linearly and non-degenerately and the actions of $\T$ and $H$ commute. Then the category of $\T$-equivariant quasi-coherent sheaves on $[\C^d/H]$ is equivalent to the category of stacky $S$-families.
\end{proposition}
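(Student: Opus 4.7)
The plan is to obtain the equivalence by chaining together two equivalences already available: first Corollary \ref{grading}, which converts $\T$-equivariant quasi-coherent sheaves on $[\C^d/H]$ into doubly graded modules over $A := H^0(\C^d,\O_{\C^d}) = \C[x_1,\ldots,x_d]$, and then the repackaging of singly $X(\T)$-graded $A$-modules as $S$-families explained just before the proposition (itself due to Perling/Kaneyama). The task is essentially to verify that the $X(H)$-fine-grading transports correctly through the second step and that the resulting object is exactly what was axiomatized as a stacky $S$-family.

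First I would apply Corollary \ref{grading} with the diagonalizable groups $G = \T$ and the finite abelian group $H$, both acting linearly on $\Spec A = \C^d$ with commuting actions (by hypothesis). This yields an equivalence between the category of $\T$-equivariant quasi-coherent sheaves on $[\C^d/H]$ and the category of $A$-modules carrying an $X(\T)$-grading together with an $X(H)$-fine-grading. Explicitly, to a sheaf $\F$ one associates $M := H^0(\C^d,\F)$ with its bidecomposition $M = \bigoplus_{m,n} M(m)_n$ where $M(m)_n = H^0(\C^d,\F)_{m,n}$.

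Next I would pick the unique (up to reordering and scaling) coordinates $x_1,\ldots,x_d$ on $\C^d$ diagonalizing the $\T$-action, so that $\lambda\cdot x_i = \chi(m_i)(\lambda)\, x_i$. Since the $\T$- and $H$-actions on $\C^d$ commute and $H$ is diagonalizable, each coordinate $x_i$ is simultaneously an $H$-eigenvector, i.e.\ $h\cdot x_i = \chi(n_i)(h)\, x_i$ for a unique $n_i \in X(H)$. The $A$-module structure on $M$ is encoded by the multiplication maps $\chi_i(m) : M(m) \to M(m+m_i)$, $s \mapsto x_i s$, and the non-degeneracy of the $\T$-action together with $\Z$-independence of the $m_i$ guarantees that these maps together with the collection $\{M(m)\}$ recover $M$ as a graded module. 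The compatibility (\ref{trans}) is just the commutativity $x_i x_j = x_j x_i$ in $A$. This is exactly the data of an $S$-family $\hat F$ attached to $M$ after forgetting the fine grading, and the assignment $M \mapsto \hat F$ is an equivalence of categories by the discussion preceding the proposition.

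Finally I would check the compatibility of the fine grading with the $S$-family structure. Because $x_i$ lies in the single $H$-weight $n_i$, multiplication by $x_i$ sends $M(m)_n = F(m)_n$ into $M(m+m_i)_{n+n_i} = F(m+m_i)_{n+n_i}$, so the maps $\chi_i(m)$ restrict as required in item ii) of the definition of a stacky $S$-family; the hexagon identity (\ref{trans}) still holds on each fine piece since it holds on the whole $F(m)$. Conversely, given a stacky $S$-family $\hat F$, setting $M := \bigoplus_{m,n} F(m)_n$ with the prescribed $\chi_i(m)$ produces an $A$-module with the required $X(\T) \times X(H)$-bigrading, and morphisms of stacky $S$-families correspond bijectively to bigraded $A$-module homomorphisms. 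Composing these two equivalences gives the desired equivalence of categories.

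I do not anticipate a serious obstacle: the content of the proposition is purely a bookkeeping reformulation, and the only point worth emphasizing is that diagonalizability of $H$ combined with the commutativity of the two actions forces the coordinates $x_i$ chosen to diagonalize $\T$ to simultaneously diagonalize $H$, which is what allows the fine grading to be read off degree-by-degree in a manner compatible with the $S$-family maps.
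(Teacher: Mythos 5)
Your proposal is correct and follows essentially the same route as the paper: apply Corollary \ref{grading} to pass to $A$-modules with $X(\T)$-grading and $X(H)$-fine-grading, then observe that the coordinates diagonalizing the $\T$-action are simultaneously $H$-eigenvectors (by commutativity and diagonalizability of $H$), so that multiplication by $x_i$ shifts the fine grading by $n_i$ and the data repackages exactly as a stacky $S$-family. The paper treats this as a summary of the preceding discussion rather than writing out a formal proof, and your write-up fills in the same bookkeeping.
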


Note that a stacky $S$-family decomposes according to the elements of the box $\cB_\T$ as in Section 3.1. 
Coherent, torsion free and reflexive $\T$-equivariant quasi-coherent sheaves on $[\C^d/H]$ correspond to stacky $S$-families with underlying $S$-family satisfying the properties described in Section 3.1. We give two examples. \\

\noindent \textbf{Example 1.} Let $\T = \C^{*2}$ act by $(\kappa,\lambda)\cdot (x,y)=(\kappa^2x,\lambda y)$ and $H = \bmu_2$ by $-1 \cdot (x,y)=(-x,-y)$. The box in this example is $\cB_\T = \{(0,0),(1,0)\}$. A rank 1 toric torsion free sheaf $\F$ on $[\C^2/\bmu_2]$ corresponds to the following data. Firstly, $\F = {}_{(0,0)}\F$ or $\F = {}_{(1,0)}\F$, because the rank is 1. In either case, ${}_{b}\F$ is described by specifying a double filtration of $\C$ on $\Z^2$ corresponding to the $S$-family ${}_{b}F(\l_1,\l_2)$. The ``stackiness'' comes in by picking any $(L_1,L_2) \in \Z^2$ such that ${}_{b}F(L_1,L_2) = \C$ and giving this vector space weight $0$ or $1$ with respect to $X(H) = \Z_2$. Since all maps are the identity (or zero), this fixes all the $X(H)$-weights. A typical ${}_{b}\hat{F}$ looks like:
\begin{displaymath}
\xy
(0,0)*{} ; (30,0)*{} **\dir{} ; (0,5)*{} ; (30,5)*{} **\dir{.} ; (0,10)*{} ; (30,10)*{} **\dir{.} ; (0,15)*{} ; (30,15)*{} **\dir{.} ; (0,20)*{} ; (30,20)*{} **\dir{.} ; (0,25)*{} ; (30,25)*{} **\dir{.} ; (0,30)*{} ; (30,30)*{} **\dir{} ;                   (0,0)*{} ; (0,30)*{} **\dir{} ; (5,0)*{} ; (5,30)*{} **\dir{.} ; (10,0)*{} ; (10,30)*{} **\dir{.} ; (15,0)*{} ; (15,30)*{} **\dir{.} ; (20,0)*{} ; (20,30)*{} **\dir{.} ; (25,0)*{} ; (25,30)*{} **\dir{.} ; (30,0)*{} ; (30,30)*{} **\dir{} ;       (10,30)*{} ; (10,20)*{} **\dir{-} ; (10,20)*{} ; (15,20)*{} **\dir{-} ; (15,20)*{} ; (15,15)*{} **\dir{-} ; (15,15)*{} ; (20,15)*{} **\dir{-} ; (20,15)*{} ; (20,5)*{} **\dir{-} ; (20,5)*{} ; (30,5)*{} **\dir{-} ;                 (10,25)*!<-7pt,-7pt>{0} ; (15,25)*!<-7pt,-7pt>{1} ; (20,25)*!<-7pt,-7pt>{0} ; (25,25)*!<-7pt,-7pt>{1} ; (10,20)*!<-7pt,-7pt>{1} ; (15,20)*!<-7pt,-7pt>{0} ; (20,20)*!<-7pt,-7pt>{1} ; (25,20)*!<-7pt,-7pt>{0} ; (15,15)*!<-7pt,-7pt>{1} ; (20,15)*!<-7pt,-7pt>{0} ; (25,15)*!<-7pt,-7pt>{1} ; (20,10)*!<-7pt,-7pt>{1} ; (25,10)*!<-7pt,-7pt>{0} ; (20,5)*!<-7pt,-7pt>{0} ; (25,5)*!<-7pt,-7pt>{1} ; (-20,15)*{{}_{b}\hat{F}}
\endxy 
\end{displaymath}
where the origin $(\l_1,\l_2)=(0,0)$ can be located anywhere. Here the vector spaces associated to the lattice points on or above the solid line are $\C$ and zero otherwise. Moreover, $0,1$ refers to the $\bmu_2$-weight of the vector space associated to the lattice point in the left-bottom corner of the box. \\

\noindent \textbf{Example 2.} Let $[\C^2/\bmu_2]$ be as in the previous example. Let $\F$ be a rank 2 toric torsion free sheaf on $[\C^2/\bmu_2]$. Decompose $$\F = {}_{(0,0)}\F \oplus {}_{(0,1)}\F.$$ There are three possibilities:
\begin{enumerate}
\item[(I)] The sheaves ${}_{(0,0)}\F$ and ${}_{(0,1)}\F$ are both non-zero: then each is a rank 1 toric torsion free sheaf as in example 1.
\item[(II)] Only one ${}_{b}\F$ is non-zero, then $${}_{b}F(L_1,L_2) = {}_{b}F(\infty,\infty) = \C^2$$ for sufficiently large $(L_1,L_2) \in \Z^2$ and this space decomposes as a (nontrivial) sum of a weight $0$ and $1$ space $${}_{b}F(L_1,L_2) = {}_{b}F(L_1,L_2)_0 \oplus {}_{b}F(L_1,L_2)_{1}.$$ The 1-dimensional spaces ${}_{b}F(L_1,L_2)_n$ both have a double filtration as in example 1 and ${}_{b}\F$ decomposes as a sum of two rank 1 toric torsion free sheaves.
\item[(III)] Only one ${}_{b}\F$ is non-zero, then $${}_{b}F(L_1,L_2) = {}_{b}F(\infty,\infty) = \C^2$$ for sufficiently large $(L_1,L_2) \in \Z^2$ and this space has weight $0$ or $1$. This time ${}_{b}F(\l_1,\l_2)$ is a double filtration of $\C^2$ on $\Z^2$ and the $X(H)$-weight of each ${}_{b}F(\l_1,\l_2)$ is fully determined by the $X(H)$-weight of ${}_{b}F(L_1,L_2)$. 
\end{enumerate} 
For a fixed choice of dimensions of the $X(\T)$-weight spaces, there are only a finite number of sheaves of type I and II, but the sheaves of type III have continuous moduli corresponding to different choices of inclusions $\C \subset \C^2$. A typical sheaf of type III looks like:
\begin{displaymath}
\xy
(0,0)*{} ; (30,0)*{} **\dir{} ; (0,5)*{} ; (30,5)*{} **\dir{.} ; (0,10)*{} ; (30,10)*{} **\dir{.} ; (0,15)*{} ; (30,15)*{} **\dir{.} ; (0,20)*{} ; (30,20)*{} **\dir{.} ; (0,25)*{} ; (30,25)*{} **\dir{.} ; (0,30)*{} ; (30,30)*{} **\dir{} ;                   (0,0)*{} ; (0,30)*{} **\dir{} ; (5,0)*{} ; (5,30)*{} **\dir{.} ; (10,0)*{} ; (10,30)*{} **\dir{.} ; (15,0)*{} ; (15,30)*{} **\dir{.} ; (20,0)*{} ; (20,30)*{} **\dir{.} ; (25,0)*{} ; (25,30)*{} **\dir{.} ; (30,0)*{} ; (30,30)*{} **\dir{} ;       (10,30)*{} ; (10,20)*{} **\dir{=} ; (10,20)*{} ; (15,20)*{} **\dir{=} ; (15,20)*{} ; (15,15)*{} **\dir{=} ; (15,15)*{} ; (20,15)*{} **\dir{=} ; (20,15)*{} ; (20,5)*{} **\dir{=} ; (20,5)*{} ; (30,5)*{} **\dir{=} ;                 (10,25)*!<-7pt,-7pt>{0} ; (15,25)*!<-7pt,-7pt>{1} ; (20,25)*!<-7pt,-7pt>{0} ; (25,25)*!<-7pt,-7pt>{1} ; (10,20)*!<-7pt,-7pt>{1} ; (15,20)*!<-7pt,-7pt>{0} ; (20,20)*!<-7pt,-7pt>{1} ; (25,20)*!<-7pt,-7pt>{0} ; (15,15)*!<-7pt,-7pt>{1} ; (20,15)*!<-7pt,-7pt>{0} ; (25,15)*!<-7pt,-7pt>{1} ; (20,10)*!<-7pt,-7pt>{1} ; (25,10)*!<-7pt,-7pt>{0} ; (20,5)*!<-7pt,-7pt>{0} ; (25,5)*!<-7pt,-7pt>{1} ; (-20,15)*{{}_{b}\hat{F}} ;          (0,30)*{} ; (0,25)*{} **\dir{-} ; (0,25)*{} ; (5,25)*{} **\dir{-} ; (5,25)*{} ; (5,20)*{} **\dir{-} ; (5,20)*{} ; (10,20)*{} **\dir{-} ; (10,20)*{} ; (10,10)*{} **\dir{-} ; (10,10)*{} ; (15,10)*{} **\dir{-} ; (15,10)*{} ; (15,5)*{} **\dir{-} ; (15,5)*{} ; (20,5)*{} **\dir{-} ; (25,5)*{} ; (25,0)*{} **\dir{-} ; (25,0)*{} ; (30,0)*{} **\dir{-} ;                    (0,25)*!<-7pt,-7pt>{0} ; (5,25)*!<-7pt,-7pt>{1} ; (5,20)*!<-7pt,-7pt>{0} ; (10,15)*!<-7pt,-7pt>{0} ; (10,10)*!<-7pt,-7pt>{1} ; (15,10)*!<-7pt,-7pt>{0} ; (15,5)*!<-7pt,-7pt>{1} ; (25,0)*!<-7pt,-7pt>{0}
\endxy 
\end{displaymath}
where the vector spaces below the single line are $0$-dimensional, on and above but below the double line $1$-dimensional and on or above the double line $\C^2$. The $0$'s and $1$'s in the diagram denote the $X(H)$-weights. The continuous moduli in this example are $(\PP^{1})^3$, because there are three connected components between the single and double line.

\section{Weighted projective planes} \label{sec:stack P(a,b,c)}

The weighted projective plane $\PP:=\P$ is by definition the quotient stack $[\C^3 \backslash \{0\}/\C^*]$, where $\C^*$ acts on $\C^3$ by $$\lambda \cdot (X,Y,Z)=(\lambda^a X,\lambda^b Y,\lambda^c Z).$$ This is a smooth complete toric DM stack in the sense of \cite{BCS,FMN}. Let $d:=\text{gcd}(a,b,c)$, $d_{12}:=\text{gcd}(a,b)$, $d_{13}:=\text{gcd}(a,c)$ and $d_{23}:=\text{gcd}(b,c)$. The coarse moduli scheme of $\PP$, denoted by $\textbf{P} = \textbf{P}(a,b,c)$, is the weighted projective plane in the classical sense \cite[Sect.~2.2]{Ful}. The toric variety $\textbf{P}(a,b,c)$ (in general singular) is isomorphic to $\textbf{P}(a/d,b/d,c/d)$. If $d_{12}=d_{23}=d_{13}=1$, then the structure map $\PP \to \bf{P}$ is an isomorphism away from those points among $(1:0:0), (0:1:0), (0:0:1) \in \bf{P}$ which are singularities\footnote{In this case $\P$ is called a canonical DM stack (see \cite{FMN}).}. In the case $d=1$, $\P$ is an orbifold meaning that the structure map $\PP \to \bf{P}$ is an isomorphism away from the lines $$(0:Y:Z),(X:0:Z), (X:Y:0) \subset \bf{P}.$$ In general, $\PP$ is a $B\bmu_d$-gerbe over the orbifold $\PP(a/d,b/d,c/d)$.

\subsection{Open cover and torus action} \label{sec:open cover}

$\PP$ can be covered by the standard open substacks $\U_1, \U_2, \U_3$ corresponding to the open sets $$\{X\neq 0 \},\{Y \neq 0\}, \{Z \neq0\} \subset \C^3\backslash \{0\}.$$ Define a map of sets $$\hat{\cdot}:\{1,2,3\} \to \Z_{>0}$$ sending $1 \mapsto a$, $2\mapsto b$ and $3\mapsto c$. By \cite{BCS}, we have an isomorphism of stacks
$$\U_i\cong [\C^2/\bmu_{\i}],$$ where $\mu \in \bmu_{\i}$ acts on $(x,y)\in \C^2$ by 
$$\begin{cases}(\mu^b x,\mu^c y) & \text{if}\;\; i=1, \\ (\mu^a x,\mu^c y) & \text{if} \;\;i=2, \\ (\mu^a x,\mu^b y) & \text{if} \;\; i=3. \end{cases}$$ The open immersion $\U_i \hookrightarrow \PP$ is induced by the natural inclusion $\bmu_{\i} \hookrightarrow \C^*$ as the group of $\i$-th roots of unity and the map $\C^2 \to \C^3\backslash \{0\}$ sending $(x,y)$ to $$\begin{cases}(1,x,y) & \text{if}\;\; i=1, \\ (x,1,y) & \text{if} \;\;i=2, \\ (x,y,1) & \text{if} \;\; i=3. \end{cases}$$ 

We compute the double stack theoretic intersections of the $\U_i$'s by taking the fiber product over $\PP$ via the maps defined above. The outcome is $$\U_{ij}:=\U_i \times_\PP \U_j \cong [\C^*\times \C/\bmu_{\i}\times \bmu_{\j}].$$ The action of $(\mu,\nu) \in \bmu_{\i}\times \bmu_{\j}$ on $(\gamma,z)\in \C^*\times \C$ is given by $$(\nu \mu^{-1} \gamma,\mu^{\k} z),$$ where $k\in \{1,2,3\} \backslash \{i,j\}.$ The open immersions $\U_{ij} \hookrightarrow \U_i$ and $\U_{ij} \hookrightarrow \U_j$ are induced respectively from the projections $$\bmu_{\i}\times \bmu_{\j} \to \bmu_{\i} \;\; \text{  and  } \;\;\bmu_{\i}\times \bmu_{\j} \to \bmu_{\j},$$ and the maps $\C^*\times \C\to  \C \times \C $ that send $(\gamma,z)$ to respectively 

$$\begin{cases}(\gamma^{-b},z) & \text{if}\;\; (i,j)=(1,2), \\ (z,\gamma^{-c} ) & \text{if} \;\;(i,j)=(2,3), \\ (z,\gamma^{-c}) & \text{if} \;\; (i,j)=(1,3), \end{cases} \quad \text{and} \quad \begin{cases}(\gamma^{a},z\gamma^c) & \text{if}\;\; (i,j)=(1,2), \\ (z\gamma^a,\gamma^{b} ) & \text{if} \;\;(i,j)=(2,3), \\ (\gamma^a,z\gamma^{b}) & \text{if} \;\; (i,j)=(1,3). \end{cases}$$

Similarly, one can show $$\U_{123}:=\U_1 \times_\PP \U_2 \times_\PP \U_3 \cong [\C^*\times \C^*/\bmu_a\times \bmu_b \times \bmu_c],$$ where the action of $(\mu,\nu,\xi) \in \bmu_a\times \bmu_b \times \bmu_c $ on $(\kappa,\lambda) \in \C^*\times \C^*$ is given by $$(\nu \mu^{-1} \kappa, \xi\nu^{-1} \lambda).$$ The open immersion $\U_{123} \hookrightarrow \U_{ij}$ is given by the corresponding projection $$\bmu_a\times \bmu_b \times \bmu_c \to \bmu_{\i} \times \bmu_{\j}$$ and the map $\C^*\times \C^* \to \C^*\times \C$ taking $(\kappa, \lambda)$ to $$\begin{cases}(\kappa,\kappa^{-c}\lambda^{-c}) & \text{if}\;\; (i,j)=(1,2), \\ (\lambda,\kappa^{a}  ) & \text{if} \;\;(i,j)=(2,3), \\ (\kappa\lambda,\kappa^{-b}) & \text{if} \;\; (i,j)=(1,3). \end{cases}$$ 

The stack $\PP$ is equipped with a natural action of a 2-dimensional torus obtained by rigidifying its open dense substack $\U_{123}\cong [\C^{*3}/\C^*]$ defined above (see \cite{FMN}). Denote this torus\footnote{In fact $\U_{abc} \cong \T\times B\bmu_d$.} by $\T\cong \C^* \times \C^*$.  One can easily define an action of $\T$ on the stacks $\U_i$ and $\U_{ij}$ such that all the open immersions defined above are $\T$-equivariant. In the table below we summarize the result by showing the corresponding weights of the $\T$-action on $\C\times \C$ for each $\U_i$ and on $\C^*\times \C$ for each $\U_{ij}$ (cf.~Lemma \ref{lem:G-action}):
\begin{table} \label{table}
\begin{center}
\begin{tabular}{|c|c|}
\hline
& $\T$-weights on $\C\times \C$ \\ \hline
$\U_1$ & $(b,0),(0,c)$\\\hline
$\U_2$ & $(-a,0), (-c,c)$\\\hline
$\U_3$ & $(0,-a),(b,-b)$\\\hline
\end{tabular}
\quad \quad
\begin{tabular}{|c|c|}
\hline
& $\T$-weights on $\C^*\times \C$ \\ \hline
$\U_{12}$ & $(-1,0),(0,c)$\\\hline
$\U_{23}$ & $(1,-1),(-a,0)$\\\hline
$\U_{13}$ & $(0,-1),(b,0)$\\\hline
\end{tabular}
\end{center}
\label{default}
\caption{$\T$-action on each $\U_i$ and $\U_{ij}$.}
\end{table}%

\noindent The $\T$-action on $\PP$ has three fixed points $P_1, P_2, P_3$ which correspond to the origins of the open substacks $\U_1, \U_2, \U_3$. 

\subsection{$K$-groups of weighted projective planes} \label{sec:K}
\newcommand{\tch}{\widetilde{\ch}}
\newcommand{\ttd}{\widetilde{\td}}
Denote by $K_0(\PP)$ be the Grothendieck group of coherent sheaves on $\PP$ and let $K_0(\PP)_\Q:=K_0(\PP)\otimes_{\Z} \Q$.
By \cite{BH} $$K_0(\PP)_\Q\cong \Q[g,g^{-1}]/(1-g^a)(1-g^b)(1-g^c).$$ Here $g:=[\O_{\PP}(-1)]$ is a generator of $\text{Pic}(\PP)\cong \Z$ \cite[Ex.~7.27]{FMN}. The classes of the structure sheaves of the fixed points of the $\T$-action are $$[\O_{P_i}]=(1-g^a)(1-g^b)(1-g^c)/(1-g^{\i}). $$We prove this in Proposition \ref{point} as an easy corollary of the general description of toric sheaves on $\PP$ developed in Section 5. We get the following identities in $K_0(\PP)_\Q$ $$[\O_{P_i}]=[\O_{P_i}]g^{\i}, \quad i=1,2,3.$$ The elements $[\O_{P_i}]g^j$ for $i=1,2,3$ and $j=0,\dots, \i-1$ generate the subgroup of $K_0(\PP)_\Q$ of $0$-dimensional coherent sheaves. It is not hard to prove the following relations among these elements:
\begin{align} \label{equ:relations}
\sum_{i=0}^{a/d-1}[\O_{P_1}]g^{id} &=\sum_{j=0}^{b/d-1}[\O_{P_2}]g^{jd}=\sum_{k=0}^{c/d-1}[\O_{P_3}]g^{kd},\\ \nonumber
\sum_{i=0}^{a/d-1}[\O_{P_1}]g^{id+1} &=\sum_{j=0}^{b/d-1}[\O_{P_2}]g^{jd+1}=\sum_{k=0}^{c/d-1}[\O_{P_3}]g^{kd+1}, \\ \nonumber
&\dots \\ \nonumber
\sum_{i=0}^{a/d-1}[\O_{P_1}]g^{id+d-1} &=\sum_{j=0}^{b/d-1}[\O_{P_2}]g^{jd+d-1}=\sum_{k=0}^{c/d-1}[\O_{P_3}]g^{kd+d-1},\\ \nonumber
\sum_{i=0}^{a/d_{12}-1}[\O_{P_1}]g^{id_{12}} &=\sum_{j=0}^{b/d_{12}-1}[\O_{P_2}]g^{jd_{12}} ,\dots \\ \nonumber
\sum_{j=0}^{b/d_{23}-1}[\O_{P_2}]g^{jd_{23}} &=\sum_{k=0}^{c/d_{23}-1}[\O_{P_3}]g^{kd_{23}} ,\dots \\ \nonumber
\sum_{i=0}^{a/d_{13}-1}[\O_{P_1}]g^{id_{13}} &=\sum_{k=0}^{c/d_{13}-1}[\O_{P_3}]g^{kd_{13}} ,\dots \nonumber
\end{align}
For example, the first equality in the first row follows from the following identity in $K_0(\PP)_\Q$  $$(1+g^d+\cdots+g^{a-d})(1-g^b)(1-g^c)=(1-g^a)(1+g^d+\cdots+g^{b-d})(1-g^c).$$ The first equality in the second row is obtained by multiplying both sides of this equation by $g$ etc. 

Let $I\PP$ be the inertia stack of $\PP$ and $\pi: I\PP\longrightarrow \PP$ the natural map. We index the connected components of $I\PP$ by the set $F$ and its elements by $f \in F$. For any coherent sheaf $\F$ on $\PP$, define $$\tch:K_0(\PP)_\Q \to A^*(I\PP)_{\bmu_\infty}, \ \tch(\F):=\sum_{f \in F} \sum_i \omega_{f,i} \cdot \ch(\F_{f,i}),$$ where $\F_{f}$ is the restriction of $\pi^* \F$ to the component corresponding to $f$, $\F_f = \bigoplus_{i} \F_{f,i}$ is its decomposition into eigenvectors and $\omega_{f,i} \in \bmu_\infty$ are the corresponding eigenvalues\footnote{Here $\bmu_\infty$ is the group of roots of unity. We will only be concerned with the case $\F = L$ is a line bundle, in which case each $L_f$ is an eigensheaf.}. For any $f\in F$, we denote by $\tch(\F)_f$ the restriction of $\tch(\F)$ to the component $Z$ of $I\PP$ corresponding to $f$. 

Define $$\ttd: \Pic(\PP) \to A^*(I\PP)_{\bmu_\infty}$$ as follows. Let $L\in \Pic(\PP)$ and let $f \in F$. Denoting $c_1(L_f)=x_f$ and denoting the corresponding eigenvalue of $L_f$ by $\omega_f$, the component $\ttd (L)_f$ is given by\footnote{Here $\delta_{1,\omega_f}$ is the Kronecker delta. This expression reduces to the usual Todd class of a line bundle when $\omega_f=1$.} $$\displaystyle \frac{x^{\delta_{1,\omega_f}}_f}{1-\omega_f e^{-x_f}}.$$ With these definitions one can compute the holomorphic Euler characteristic of $\F$ using To\"en-Riemann-Roch (TRR) \cite{Toe} 
\begin{align*}
\chi(\F) &= 
\int_{I\PP} \tch(\F) \cdot \ttd(\O_\PP(a)) \cdot \ttd(\O_\PP(b)) \cdot \ttd(\O_\PP(c)).
\end{align*}
Define $m:=\operatorname{lcm}(a,b,c)$. For any coherent sheaf $\F$ on $\PP$ and $t \in \Z_{>0}$, define $$P(\F,t):=\chi(\F\otimes \O_\PP(mt))$$ to be the Hilbert polynomial of $F$ with respect to\footnote{$\O_\PP(m)$ is the pull back of $\O_{\bf{P}}(1)$ from the coarse moduli space $\bf{P}$. Recall that $\O_\PP(1)$ was introduced at the beginning of this section.} $\O_\PP(m)$.  
\begin{proposition}\label{prop:hilb poly} For any $r \in \Z$: \begin{enumerate}[i)]
\item  if $d   \nmid r$ then $P(\O_\PP(r),t)=0$, \item  if $d \mid r$ then $P(\O_\PP(r),t)$ modulo the $t$-constant term is given by 
$$\frac{dm^2}{2abc}t^2+m\bigg(\frac{2r+a+b+c}{2abc}d 
+\sum_{1\le i<j \le3}\frac{1}{\i \j}\sum_{\tiny \begin{array}{c}  h\in \{1,\dots,d_{ij}-1\} \\ \frac{d_{ij}}{d} \not | h \end{array}} \frac{ \omega_{ij}^{hr}}{1-\omega_{ij}^{h\k}}  \bigg)t$$ where $\omega_{ij}=e^{2\pi \sqrt{-1} /d_{ij}}$ and $k \in \{1,2,3\} \setminus \{i,j\}$.
\end{enumerate}
\end{proposition}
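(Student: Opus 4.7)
The plan is to apply the Toën–Riemann–Roch (TRR) formula above to $\F = \O_\PP(r+mt)$. A crucial preliminary observation is that $\i$ divides $m$ for each $i=1,2,3$, so every stabilizer element $\lambda$ appearing in a component of $I\PP$ satisfies $\lambda^m = 1$. Hence the eigenvalue of $\lambda$ on $\O_\PP(r+mt)$ equals $\lambda^{r+mt} = \lambda^r$, independent of $t$. The components of $I\PP$ fall into three types: (a) copies of $\PP$ indexed by $\lambda \in \bmu_d$; (b) copies of the divisors $D_k \cong \PP(\i,\j)$ indexed by $\lambda \in \bmu_{d_{ij}}\setminus\bmu_d$ (with $\{i,j,k\} = \{1,2,3\}$); and (c) copies of the fixed points $P_i \cong B\bmu_{\i}$ indexed by the remaining stabilizer elements. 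Type (c) contributions are $0$-dimensional integrals, so they affect only the $t$-constant term of $P(\O_\PP(r),t)$ and may be discarded by the statement.

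For part (i), I would combine an elementary $H^0$ computation with Serre vanishing. The dimension of $H^0(\PP,\O_\PP(k))$ counts monomials $X^iY^jZ^\ell$ with $ai+bj+c\ell = k$, which forces $d\mid k$ whenever it is nonzero. Since $d\mid m$, the hypothesis $d \nmid r$ yields $H^0(\O_\PP(r+mt)) = 0$ for every $t$. By Serre vanishing applied to $\O_\PP(m)$, which is the pullback of the ample $\O_{\mathbf{P}}(1)$, we get $H^i(\O_\PP(r+mt)) = 0$ for $i>0$ and $t \gg 0$. Thus $\chi(\O_\PP(r+mt)) = 0$ for infinitely many $t$, and being a polynomial of degree $\le 2$, $P(\O_\PP(r),t)$ vanishes identically.

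For part (ii), I would expand the TRR integrand on each relevant component. On a type (a) component the integrand is $\lambda^r e^{(r+mt)h}\prod_{w\in\{a,b,c\}}\tfrac{wh}{1-e^{-wh}}$, paired with $\int_\PP h^2 = \tfrac{1}{abc}$. Using $\tfrac{wh}{1-e^{-wh}} = 1+\tfrac{w}{2}h + O(h^2)$, the coefficient of $h^2$ equals $\lambda^r\bigl[\tfrac{(r+mt)^2}{2} + (r+mt)\tfrac{a+b+c}{2} + T_2\bigr]$ for some $t$-independent $T_2$. Summing $\sum_{\lambda\in\bmu_d}\lambda^r = d$ (using $d\mid r$) and retaining only the $t^2$- and $t^1$-parts produces precisely $\tfrac{dm^2}{2abc}t^2$ and the first summand $\tfrac{dm(2r+a+b+c)}{2abc}\,t$. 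On a type (b) component with $\lambda\in\bmu_{d_{ij}}\setminus\bmu_d$, the integrand is $\lambda^r e^{(r+mt)h}\cdot \tfrac{\i h}{1-e^{-\i h}}\cdot\tfrac{\j h}{1-e^{-\j h}}\cdot\tfrac{1}{1-\lambda^{\k}e^{-\k h}}$, paired with $\int_{D_k}h = \tfrac{1}{\i\j}$. Since $D_k$ is $1$-dimensional, only the $h^1$-coefficient matters; its $t$-linear part comes from the $mth$ term of $e^{(r+mt)h}$ times the $h^0$-parts of the other factors, giving $\tfrac{mt \lambda^r}{(1-\lambda^{\k})\i\j}$. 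Parametrising $\lambda = \omega_{ij}^h$ with $h\in\{1,\dots,d_{ij}-1\}$, the condition $\lambda\notin\bmu_d$ translates to $\tfrac{d_{ij}}{d}\nmid h$, and summing over $h$ and over pairs $(i,j)$ reproduces the remaining summand of the formula.

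The main obstacle is careful bookkeeping: correctly enumerating the components of $I\PP$ together with their eigenvalues $\lambda^r$, identifying the degrees $\int_\PP h^2 = \tfrac{1}{abc}$ and $\int_{D_k}h = \tfrac{1}{\i\j}$, and sorting which Taylor-expansion terms feed into the $t^2$- and $t^1$-parts of the Hilbert polynomial rather than into the discarded $t^0$-part. The structural work, however, reduces the entire computation to two template integrals — one on $\PP$ and one on each $D_k$ — which together yield the claimed closed form.
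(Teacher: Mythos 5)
Your proof is correct and follows essentially the same route as the paper: for part (ii) you apply To\"en--Riemann--Roch, observe that only the two- and one-dimensional components of $I\PP$ (indexed by $\bmu_d$ and by $\bmu_{d_{ij}}\setminus\bmu_d$ respectively) contribute to the $t^2$- and $t$-coefficients, and extract exactly the same template integrals with $\int_\PP x^2=\tfrac1{abc}$ and $\int_{\PP(\i,\j)}x=\tfrac1{\i\j}$. Your treatment of part (i) via vanishing of $H^0$ plus Serre vanishing is a clean variant of the argument the paper gives in the remark following the proposition (vanishing of the push-forward to the coarse moduli space), and is if anything more complete than the paper's own one-line dispatch of that case.
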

\begin{proof}
We use the TRR formula above to compute the coefficients of $t^2$ and $t$ in $\chi(\O_\PP(r+mt))$. Only 2 and 1-dimensional components of $I\PP$ contribute to these terms. Let $$D:=\{l/d\}_{l=0,\dots,d-1}.$$ The 2-dimensional components of $I\PP$ are isomorphic to $\PP$  and they are in bijection with the elements of $D$. The eigenvalue of $\O_\PP(1)$ on $\PP_{l/d}$ is $e^{2\pi \sqrt{-1} l/d}$. For $(i,j)=(1,2),(1,3)$ or $(2,3)$, let $$D_{ij}:=\{l/d_{ij}\}_{l=0,\dots, d_{ij}-1} \setminus D.$$ The 1-dimensional components of $I\PP$ are all isomorphic to one of $\PP(a,b)$, $\PP(a,c)$ or $\PP(b,c)$. The components isomorphic to $\PP(\i,\j)$ are in bijection with the elements of $D_{ij}$. The eigenvalue of $\O_\PP(1)$ on $\PP(\i,\j)_{l/d_{ij}}$ is $e^{2\pi \sqrt{-1} l/d_{ij}}$. We find the contribution of each component separately. \\

\noindent \emph{Contribution of $\PP_{l/d}$}: The integrand in the TRR formula over $\PP_{l/d}$ is $$e^{2\pi \sqrt{-1} l(r+mt)/d}\cdot e^{(r+mt)x}\cdot  \frac{ax}{1-e^{-ax}} \cdot \frac{bx}{1-e^{-bx}} \cdot \frac{cx}{1-e^{-cx}},$$ where $x$ is the first Chern class of the pull back of $\O_\PP(1)$ to $I\PP$ and restricted to $\PP_{l/d}$. The integral over $\PP_{l/d}$ modulo the $t$-constant term is $$\frac{e^{2\pi \sqrt{-1} lr/d}}{abc}\left((m^2/2)t^2+(r+a/2+b/2+c/2)mt \right).$$ 

\noindent \emph{Contribution of $\PP(\i,\j)_{l/d_{ij}}$}: The integrand in the TRR formula over $\PP(\i,\j)_{l/d_{ij}}$ is equal to $$e^{2\pi \sqrt{-1}l(r+mt)/d_{ij}}\cdot e^{(r+mt)x}\cdot  \frac{\i x}{1-e^{-\i x}} \cdot \frac{\j x}{1-e^{-\j x}} \cdot \frac{1}{1-e^{2\pi \sqrt{-1}l \k/d_{ij}}\cdot e^{-\k x}},$$ where $x$ is the first Chern class of the pull back of $\O_\PP(1)$ to $\PP(\i,\j)_{l/d_{ij}}$ and $k\in\{1,2,3\}\setminus\{i,j\}$. The integral over $\PP(\i,\j)_{l/d_{ij}}$ modulo the $t$-constant term is equal to $$\frac{me^{2\pi \sqrt{-1}l r/d_{ij}}}{\i \j(1-e^{2\pi \sqrt{-1}l \k/d_{ij}})}t.$$ 
The proposition follows by adding the contributions of all the components above. \end{proof}

\begin{remark}
Since $\O_\PP(m)$ is the pull back of $\O_{\bf{P}}(1)$ from the coarse moduli space, $P(\F,t)$ is equal to the Hilbert polynomial of the push-forward of $\F$ to $\bf{P}$ with respect to $\O_{\bf{P}}(1)$. If $d \nmid r$ then the push forward of $\O_\PP(r)$ to $\bf{P}$ is zero, which gives another proof of the first part of Proposition \ref{prop:hilb poly}.
\end{remark}

\begin{definition} \label{indexsets}
In the proof of Proposition \ref{prop:hilb poly}, we defined the sets $$D:=\{l/d\}_{l=0,\ldots,d-1}, \ D_{ij}:=\{l/d_{ij}\}_{l=0, \ldots, d_{ij}-1} \setminus D,$$ for all $i<j$. For convenience we define $D_{ji}:=D_{ij}$. In addition, we define $$D_i:=\{l/\i\}_{l=0,\dots,\i-1} \setminus (D \cup D_{ij} \cup D_{ik}),$$ for all $\{i,j,k\} = \{1,2,3\}$. The 2-dimensional components of $I\PP$ are all isomorphic to $\PP$ and they are indexed by $D$. The 1-dimensional components of $I\PP$ are isomorphic to $\PP(\i,\j)$ and are index by $D_{ij}$. Finally, the 0-dimensional components of $I\PP$ are isomorphic to $\PP(\i)$ and are indexed by $D_i$. The sets $D$, $D_i$'s and $D_{ij}$'s give a partition of the set of connected components $F$ of $I\PP$. The eigenvalue of $\O_\PP(1)$ pulled back to $I\PP$ and restricted to the component corresponding to $f \in F$ is $e^{2 \pi \sqrt{-1} f}$. \hfill $\oslash$
\end{definition}

\begin{corollary} \label{Plinebundle}
Let $\displaystyle \cE=\oplus_{n=0}^{E-1}\O_\PP(n)$ and assume $d_{ij} \mid E$ for all $1\le i<j \le 3$. Then for any $r \in \Z$, the Hilbert polynomial $P(\O_\PP(r)\otimes \cE,t)$ modulo $t$-constant term is given by $$P(\O_\PP(r)\otimes \cE,t)=\frac{E m^2}{2abc}t^2+\sum_{i=1}^e\frac{2r+2n_i+a+b+c}{2abc}mdt,$$ where\footnote{In particular, if $d=1$ then $e=E=\rk \cE$.} $n_1,\dots, n_e$ are those $n \in \{0,\dots,E-1\}$ for which $d \mid r+n$. 
\end{corollary}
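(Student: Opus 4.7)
The plan is to reduce to Proposition \ref{prop:hilb poly} by additivity of Hilbert polynomials: decompose
\[
P(\O_\PP(r)\otimes \cE,t) = \sum_{u=0}^{E-1} P(\O_\PP(r+u),t),
\]
and discard the vanishing summands. By part (i) of Proposition \ref{prop:hilb poly}, only the values $u=u_1,\dots,u_e$ with $d\mid r+u$ contribute. Since $d\mid d_{ij}\mid E$ by hypothesis, these $u_\alpha$ form an arithmetic progression with common difference $d$ inside $\{0,\dots,E-1\}$, and $e=E/d$.

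For the $t^2$-coefficient, each surviving summand contributes $\tfrac{dm^2}{2abc}$, so the total is $e\cdot \tfrac{dm^2}{2abc}=\tfrac{Em^2}{2abc}$, matching the claimed coefficient. For the $t$-coefficient, the ``polynomial'' contributions $m\cdot\tfrac{2(r+u_\alpha)+a+b+c}{2abc}\,d$ add up directly to the claimed expression $\sum_{\alpha=1}^{e}\tfrac{2r+2u_\alpha+a+b+c}{2abc}md$, so I just have to verify that the ``exponential'' contributions in the formula of Proposition \ref{prop:hilb poly}(ii), namely
\[
\sum_{\alpha=1}^{e}\;\sum_{1\le i<j\le 3}\frac{1}{\i\j}\sum_{\substack{h\in\{1,\dots,d_{ij}-1\}\\ d_{ij}/d\,\nmid\, h}}\frac{\omega_{ij}^{h(r+u_\alpha)}}{1-\omega_{ij}^{h\k}},
\]
cancel out after summing over $\alpha$. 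This is the only real step.

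To prove this vanishing, I interchange the order of summation and fix $(i,j)$ and $h$ with $d_{ij}/d\nmid h$. Writing $u_\alpha = u_1+(\alpha-1)d$, the inner sum becomes
\[
\omega_{ij}^{h(r+u_1)}\sum_{\alpha=1}^{e}\bigl(\omega_{ij}^{hd}\bigr)^{\alpha-1}.
\]
The ratio $\omega_{ij}^{hd}=e^{2\pi\sqrt{-1}\,hd/d_{ij}}$ is not equal to $1$ precisely because $d_{ij}/d\nmid h$, so the geometric sum evaluates to $\bigl(1-\omega_{ij}^{hde}\bigr)/\bigl(1-\omega_{ij}^{hd}\bigr)$. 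Since $de=E$ and $d_{ij}\mid E$, the numerator is $1-\omega_{ij}^{hE}=0$, and this geometric sum vanishes. Assembling the contributions yields the formula in the statement.

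The main obstacle is the vanishing of the exponential correction terms in the third paragraph; the rest is bookkeeping that follows by additivity and the fact that $d\mid E$. The hypothesis $d_{ij}\mid E$ for all $i<j$ is used precisely to make every one of these geometric sums vanish, regardless of which pair $(i,j)$ and which allowed $h$ one considers.
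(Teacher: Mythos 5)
Your proof is correct and takes the route the paper intends: the corollary is stated without proof as an immediate consequence of Proposition \ref{prop:hilb poly}, obtained by decomposing $P(\O_\PP(r)\otimes\cE,t)=\sum_{u}P(\O_\PP(r+u),t)$, keeping only the $u$ with $d\mid r+u$, and checking that the exponential correction terms cancel. Your verification of that cancellation via the geometric series $\sum_\alpha(\omega_{ij}^{hd})^{\alpha-1}$, which vanishes because $\omega_{ij}^{hd}\neq 1$ when $d_{ij}/d\nmid h$ while $\omega_{ij}^{hE}=1$ by the hypothesis $d_{ij}\mid E$, is exactly the bookkeeping the paper omits.
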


\section{Toric sheaves on weighted projective planes}  

The toric DM stack $\PP := \PP(a,b,c)$ has an action of an algebraic torus $\T = \C^{*2}$ as described in Section 4.1. In this section, we describe toric sheaves on $\PP$. The approach is as follows. We cover $\PP$ by the $\T$-invariant open substacks $\U_i \cong [\C^2/\bmu_{\i}]$ 
The action of $\T$ and $\bmu_{\i}$ on $\C^2$ are described explicitly in Section 4.1. A toric sheaf $\F$ on $\PP$ restricts to a toric sheaf $\F_i$ on $\U_i$ which is (up to equivariant isomorphism) determined by its stacky $S$-family $\hat{F}_i$ (Proposition \ref{stackyfam}). Conversely, given toric sheaves $\F_i$ on $\U_i$, we want to find gluing conditions on the stacky $S$-families $\hat{F}_i$ allowing us to glue the $\F_i$ to a toric sheaf $\F$ on $\PP$. 

Our approach is as follows. Let $\F_i|_{ij}$ be the pull-back of $\F_i$ along the inclusion $\U_{ij} = \U_i \times_{\PP} \U_j \hookrightarrow \U_i$. In Section 4.1, we described these inclusions explicitly at the groupoid level and we noted $\U_{ij} \cong [\C^* \times \C / \bmu_{\i} \times \bmu_{\j}]$. This will allows us to compute the stacky $S$-family $\hat{F}_{i,ij}$ of $\F_i|_{ij}$. Matching $\hat{F}_{i,ij}$ and $\hat{F}_{j,ij}$ for all $i,j$ allows us to glue sheaves (Proposition \ref{glue}).

\subsection{Graded tensor products}

Given a morphism of affine schemes $$f : \Spec S \longrightarrow \Spec R$$ and a quasi-coherent sheaf $\F$ corresponding to an $R$-module $H^0(\F)$ on $\Spec R$, the pull-back $f^* \F$ corresponds to the $S$-module \cite[Prop.~5.2]{Har} 
\begin{equation} \label{pullback}
H^0(f^*\F) \cong H^0(\F) \otimes_{R} S.
\end{equation}
Suppose the affine schemes have an action of an algebraic torus $\T$, $f$ is $\T$-equivariant and $\F$ is a toric sheaf. Then $f^*\F$ is a toric sheaf. The $\T$-equivariant structures of $\F$ and $f^* \F$ correspond to $X(\T)$-gradings on $H^0(\F)$ and $H^0(f^*\F)$. How can the latter grading be described in terms of the gradings of $H^0(\F)$, $R$ and $S$? We follow Perling's discussion \cite[Sect.~3]{Per1}.

Suppose we are given, in general, an abelian group $A$, an $A$-graded ring $R$ and $A$-graded $R$-modules $M$ and $N$. Then $M \otimes_R N$ has a natural $A$-grading as follows \cite[Sect.~3]{Per1}. The degree $a$ part $(M \otimes_R N)_a$ is the abelian subgroup generated by all elements
\[
m \otimes n, \ \mathrm{with \ } m \in M_{a'} \ \mathrm{and \ }  n \in N_{a''} \ \mathrm{such \ that \ } a'+a''=a. 
\]
Next, suppose we are given an $A$-graded ring $R$ and a $B$-graded ring $S$. A morphism of graded rings is a pair $(\phi, \psi)$ with $\phi : R \rightarrow S$ a ring homomorphism and $\psi : A \rightarrow B$ a group homomorphism satisfying $\phi(R_{a}) \subset S_{\psi(a)}$. Given such a morphism of graded rings, $R$ can be given a $B$-grading by 
\[
R_b := \bigoplus_{a \in \psi^{-1}(b)} R_a.
\]
Likewise, any $A$-graded $R$-module $M$ can be a seen as a $B$-graded $R$-module.

Back to the geometric setting. The $\T$-equivariant morphism $f $ induces a morphism of graded rings $$(\phi, \chi) : (R,X(\T)) \longrightarrow (S,X(\T)).$$ Using $\chi$ one can change the $X(\T)$-grading on the $R$-modules $H^0(\F)$ and $R$ as described above. Then $S$ becomes an $X(\T)$-graded $R$-module as well and $H^0(f^*\F)$ inherits a natural $X(\T)$-grading by \eqref{pullback}. \emph{Comment}: in our applications we are always in the easier situation where $\chi$ is the identity map.

\subsection{Gluing conditions}

We are now ready to describe toric sheaves on $\PP$. A toric sheaf $\F_i$ on $\U_i \cong [\mathbb{C}^2/\bmu_{\i}]$ is determined (up to equivariant isomorphism) by its stacky $S$-family $\hat{F}_i$ (Proposition \ref{stackyfam}). We want to compute the stacky $S$-family $\hat{F}_{i,ij}$ of the toric sheaf $\F_i|_{ij}$ on $\U_{ij} \cong [\C^* \times \C / \bmu_{\i} \times \bmu_{\j}]$ for all $i,j$. Matching $\hat{F}_{i,ij}$ and $\hat{F}_{j,ij}$ for all $i,j$ allows us to glue the $\F_i$ to a global toric sheaf $\F$ by Proposition \ref{glue}.

The torus $\T$ has character group $X(\T) = \Z^2$. This torus acts on each $\U_i$ and $\U_{ij}$. We denote the elements of $\T = \C^{*2}$ by $(\kappa, \lambda) \in \T$. For each $\U_i \cong [\C^2/\bmu_{\i}]$, there are coordinates $(x,y)$ on $\C^2$ such that $$(\kappa, \lambda)\cdot (x,y) = (\chi(m_1)(\kappa, \lambda)x,\chi(m_2)(\kappa, \lambda)y),$$ for independent $m_1, m_2 \in X(\T) $. The characters $m_1, m_2$ are unique up to \emph{order} and we choose the following orders\footnote{Compared to Table \ref{table} in Section 4.1, we have changed the order for $i=2$ because of symmetry reasons.} (see Table \ref{table} in Section 4.1)
\begin{align*}
i=1: \ (m_1,m_2) &= ((b,0),(0,c)), \\
i=2: \ (m_1,m_2) &= ((-c,c),(-a,0)), \\
i=3: \ (m_1,m_2) &= ((0,-a),(b,-b)).
\end{align*}
These vectors span the box $\cB_\T$ of the action of $\T$ on $\U_i$. Likewise, for each $\U_{ij} \cong [\C^* \times \C / \bmu_{\i} \times \bmu_{\j}]$ there are coordinates $(\gamma,z)$ on $\C^*\times \C$ such that $$(\kappa,\lambda)\cdot (\gamma,z) = (\chi(m_1)(\kappa,\lambda)\gamma,\chi(m_2)(\kappa,\lambda)z),$$ for independent $m_1, m_2 \in X(\T)$. 

We compute $\hat{F}_{1,12}$ in terms of $\hat{F}_1$. Recall that we denote the various vector spaces corresponding to $\hat{F}_1$ by (see Section 3) $${}_{(i/b,j/c)}F_1(\l_1,\l_2)_l.$$ Here the box $\cB_T = [0,b-1] \times [0,c-1] \subset X(\T)$ and its elements are written by $(i/b,j/c) \in \cB_T$. The inclusion $$\U_{12} \cong [\C^* \times \C / \bmu_a \times \bmu_b] \hookrightarrow \U_1 \cong [\C^2/\bmu_a]$$ corresponds to the \'etale morphism (Section 4.1)
\begin{equation} \label{map}
\C^* \times \C \longrightarrow \C^2, \ (\gamma,z) \mapsto (\gamma^{-b},z) = (x,y),
\end{equation}
which intertwines the $\bmu_a \times \bmu_b$- and $\bmu_a$-actions and the $\T$-actions. These actions are given by
\begin{align*}
(\kappa,\lambda)\cdot(\gamma,z) &= (\kappa^{-1}\gamma,\lambda^c z), \\ 
(\mu,\nu)\cdot(\gamma,z) &= (\mu^{-1}\nu \gamma, \mu^c z), \\ 
(\kappa,\lambda)\cdot(x,y) &= (\kappa^bx,\lambda^cy), \\
\mu\cdot(x,y) &= (\mu^b x, \mu^c y). 
\end{align*}
We denote $a$-th roots of unity by $\mu \in \bmu_a$, $b$-th roots of unite by $\nu \in \bmu_b$ and $c$-th roots of unity by $\xi \in \bmu_c$. 

We first restrict $\F_1$ to $\C^* \times \C \subset \C^2$. For a fixed $\l_2$, the vector spaces ${}_{(i/b,j/c)}F_1(\l_1,\l_2)$ stabilize for $\l_1 \gg 0$. This means they are all isomorphic for $\l_1 \gg 0$. Up to isomorphism of $\hat{F}_1$, we can take all the isomorphisms between these limiting spaces to be the identity map (though these maps in general do swap the fine-grading!). We denote the limit by $${}_{(i/b,j/c)}F_1(\infty,\l_2).$$ The sheaf $\F_1|_{\C^* \times \C}$ has an $S$-family $\hat{G}_1$ which is easily computed using the description of grading on a tensor product of Section 5.1
\[
{}_{(i/b,j/c)}G_1(\l_1,\l_2) = {}_{(i/b,j/c)}F_1(\infty,\l_2).
\]
The maps between these vector spaces are the identity in the $\l_1$-direction and the induced maps in the $\l_2$-direction. This is because multiplication by $x$ is an isomorphism. The fine-grading does depend on $\l_1$ albeit in a trivial way. This means that for $\l_1 \gg 0$ such that ${}_{(i/b,j/c)}F_1(\l_1,\l_2)={}_{(i/b,j/c)}F_1(\infty,\l_2)$, we have $${}_{(i/b,j/c)}G_1(\l_1,\l_2)_l = {}_{(i/b,j/c)}F_1(\l_1,\l_2)_l.$$ The fine-grading on ${}_{(i/b,j/c)}G_1(\l_1,\l_2)$ for any fixed $\l_1$ determines the fine-grading on ${}_{(i/b,j/c)}G_1(\l_1,\l_2)$ for all $\l_1$
\[
{}_{(i/b,j/c)}G_1(\l_1,\l_2)_l = {}_{(i/b,j/c)}G_1(0,\l_2)_{l-\l_1 b} \otimes \mu^{\l_1 b}.
\]
Here $- \otimes \mu^{l}$ stands for tensoring by the 1-dimensional representation of $\bmu_a$ of weight $l \in \Z_a$. 

Next we \emph{define} the fine-grading on the limit vector space ${}_{(i/b,j/c)}F_1(\infty,\l_2)$ by
\begin{equation} \label{finegrad}
{}_{(i/b,j/c)}F_1(\infty,\l_2)_l := {}_{(i/b,j/c)}G_1(0,\l_2)_l.
\end{equation}
The $S$-family $\hat{F}_{1,12}$ at the point $$(i/b + \l_1) m_1 + (j/c + \l_2) m_2 \in X(\T)$$
is equal to
\[
\bigoplus_{0 \leq i' \leq b-1} {}_{(i'/b,j/c)}F_1(\infty,\l_2).
\]
The maps between these vector spaces are the identity in the $\l_1$-direction and the induced maps in the $\l_2$-direction. This expression can be derived as follows. Using the description of grading on a tensor product of Section 5.1 and the explicit formula for the map (\ref{map}), we see that an element of $\hat{F}_{1,12}$ at $$(i/b + \l_1) m_1 + (j/c + \l_2) m_2 \in X(\T)$$
can be uniquely written in the form
\[
\bigoplus_{0 \leq i' \leq b-1} v_{i'} \otimes \gamma^{i'-i},
\]
where
\[
v_{i'} \in {}_{(i'/b,j/c)}G_1(\l_1,\l_2).
\]
Here the notation $- \otimes \gamma$ comes from the following. The module $F_1 = H^0(\F_1)$ is a finitely generated $\C[x,y]$-module. First, it gets restricted to $\C^* \times \C$, which makes it into a $\C[x^\pm,y]$-module $G_1$. Then it gets pulled-back along the \'etale morphism (\ref{map}), which gives a $\C[\gamma^\pm,z]$-module $G_1 \otimes \C[\gamma^\pm,z]$. The homogeneous elements $v_{i'} \otimes \gamma^{{i'}-i}$ are elements of this module.

Next we determine the fine-grading. Assuming $v_{i'}$ is homogenous of weight $l+i'-i \in \Z_a$ with respect to the action of $\mu_a$, the weight of $v_{i'} \otimes \gamma^{i'-i}$ with respect to the action of $\bmu_a \times \bmu_b$ is $$(l,i'-i) \in \Z_a \oplus \Z_b.$$ Therefore, the fine-grading on the weight space of $\hat{F}_{1,12}$ at the point $$(i/b + \l_1) m_1 + (j/c + \l_2) m_2 \in X(\T)$$
is equal to
\[
\bigoplus_{\tiny{\begin{array}{c} 0 \leq i' \leq b-1 \\ l \in \Z_a \end{array}}} {}_{(i'/b,j/c)} G_1(\l_1,\l_2)_{l+i'-i} \otimes \mu^{i-i'} \otimes \nu^{i'-i},
\]
or, using the definition of the fine-grading \eqref{finegrad},
\[
\bigoplus_{\tiny{\begin{array}{c} 0 \leq i' \leq b-1 \\ l \in \Z_a \end{array}}} {}_{(i'/b,j/c)} F_1(\infty,\l_2)_{l+i'-i} \otimes \mu^{i-i'+\l_1 b} \otimes \nu^{i'-i}.
\]
Here $- \otimes \mu^{l}$ stands for tensoring with the one-dimensional representation of $\bmu_a$ of weight $l \in \Z_a$ and $- \otimes \nu^{l}$ stands for tensoring with the one-dimensional representation of $\bmu_b$ of weight $l \in \Z_b$. 
\begin{theorem}[Gluing conditions] \label{main}
The category of toric sheaves on $\PP$ is equivalent to the category of triples $\{\hat{F}_i\}_{i=1,2,3}$ of stacky finite $S$-families on $\U_i \cong[\C^2/\bmu_{\i}]$ satisfying the following gluing conditions: 
\begin{align*}
\bigoplus_{\tiny{\begin{array}{c} 0 \leq i \leq b-1 \\ l \in \Z_a \end{array}}} \!\!\! {}_{(i/b,j/c)}F_1(\infty,\l)_{l} \otimes \mu^{-i} \otimes \nu^{i}  &= \!\!\! \bigoplus_{\tiny{\begin{array}{c} 0 \leq i \leq a-1 \\ l \in \Z_b \end{array}}} \!\!\! {}_{(j/c,i/a)}F_2(\l, \infty)_{l} \otimes \nu^{-i-j-\l c} \otimes \mu^{i+j+\l c}, \\
\bigoplus_{\tiny{\begin{array}{c} 0 \leq i \leq c-1 \\ l \in \Z_b \end{array}}} \!\!\! {}_{(i/c,j/a)}F_2(\infty,\l)_{l} \otimes \nu^{-i} \otimes \xi^{i}  &= \!\!\! \bigoplus_{\tiny{\begin{array}{c} 0 \leq i \leq b-1 \\ l \in \Z_c \end{array}}} \!\!\! {}_{(j/a,i/b)}F_3(\l, \infty)_{l} \otimes \xi^{-i-j-\l a} \otimes \nu^{i+j+\l a}, \\
\bigoplus_{\tiny{\begin{array}{c} 0 \leq i \leq a-1 \\ l \in \Z_c \end{array}}} \!\!\! {}_{(i/a,j/b)}F_3(\infty,\l)_{l} \otimes \xi^{-i} \otimes \mu^{i}  &= \!\!\! \bigoplus_{\tiny{\begin{array}{c} 0 \leq i \leq c-1 \\ l \in \Z_a \end{array}}} \!\!\! {}_{(j/b,i/c)}F_1(\l, \infty)_{l} \otimes \mu^{-i-j-\l b} \otimes \xi^{i+j+\l b}, 
\end{align*}
for all $\l \in \Z$ and similar gluing conditions between the linear maps.
\end{theorem}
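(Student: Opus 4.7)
The strategy is to apply Proposition \ref{glue} to the $\T$-invariant open cover $\{\U_i\}_{i=1,2,3}$ of $\PP$: a toric sheaf on $\PP$ is equivalent data to a triple of toric sheaves $\F_i$ on $\U_i$ together with $\T$-equivariant isomorphisms $\phi_{ij}\colon \F_i|_{ij}\to \F_j|_{ij}$ on each $\U_{ij}$ satisfying the cocycle identity on $\U_{123}$. By Proposition \ref{stackyfam}, each $\F_i$ is determined up to equivariant isomorphism by its stacky $S$-family $\hat{F}_i$, and each restriction $\F_i|_{ij}$ by a stacky $S$-family $\hat{F}_{i,ij}$ on $\U_{ij}$. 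The problem thus reduces to (a) computing $\hat{F}_{i,ij}$ in terms of $\hat{F}_i$, and (b) writing down the condition that $\hat{F}_{i,ij}\cong \hat{F}_{j,ij}$ as stacky $S$-families together with the corresponding compatibility for the multiplication maps.

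Step (a) for the pair $(1,2)$ is already carried out in the paragraphs immediately preceding the theorem statement: restrict the $\C[x,y]$-module $H^0(\F_1)$ to $\C^*\times\C$, obtaining the $S$-family ${}_{(i/b,j/c)}G_1(\l_1,\l_2)={}_{(i/b,j/c)}F_1(\infty,\l_2)$, then pull back along the étale map $(\gamma,z)\mapsto(\gamma^{-b},z)$ using the graded-tensor-product formalism of Section 5.1, giving the canonical decomposition $\bigoplus_{0\le i\le b-1} v_i\otimes\gamma^{i-i_0}$, and finally track the $\bmu_a\times\bmu_b$-action on each summand to read off the fine-grading. Applying the identical procedure to $\F_2|_{12}$, now using the coordinate ordering for $\U_2$ fixed at the start of Section 5.2 and the explicit étale immersion $\U_{12}\hookrightarrow\U_2$ from Section \ref{sec:open cover}, produces the right-hand side of the first displayed equality. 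The remaining two equalities follow from the analogous computations for $(2,3)$ and $(1,3)$, which by construction repeat the same pattern under the cyclic relabellings $(a,b,c,\mu,\nu,\xi)\mapsto(b,c,a,\nu,\xi,\mu)$ and $(a,b,c,\mu,\nu,\xi)\mapsto(c,a,b,\xi,\mu,\nu)$.

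The main obstacle is the careful bookkeeping of the fine grading under the étale pullback: one must separately track the shift in the $\bmu_{\i}$-weight caused by tensoring with the monomial $\gamma^{i-i_0}$ and the new $\bmu_{\j}$-weight that $\gamma$ itself carries, and the character orderings chosen for each $\U_i$ at the start of Section 5.2 must be used in order for the three conditions to assume the clean cyclic form displayed. The phrase "similar gluing conditions between the linear maps" refers to the requirement that the identifications above intertwine the multiplication-by-coordinate maps $\chi_i(m)$, which is a direct consequence of the tensor-product formula of Section 5.1 applied to the coordinate-multiplication morphisms together with the fact that $\phi_{ij}$ is a morphism of $\T$-equivariant sheaves. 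Finally, the cocycle identity on the triple intersection $\U_{123}$ imposes no additional constraint: on $\U_{123}$ all three pairwise identifications reduce to identity maps between the same limiting weight spaces $_{b}F_i(\infty,\infty)$, so the cocycle closes automatically, and conversely any triple satisfying the three displayed pairwise conditions glues by Proposition \ref{glue} to a toric sheaf on $\PP$.
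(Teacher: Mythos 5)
Your proposal follows essentially the same route as the paper: reduce to Proposition \ref{glue} and Proposition \ref{stackyfam}, use the \'etale-pullback computation of $\hat{F}_{1,12}$ carried out just before the theorem, repeat it cyclically for the other charts, and observe that up to equivalence the gluing isomorphisms may be taken to be identities so the cocycle condition is automatic. The only detail the paper makes explicit that you gloss over is that $\hat{F}_{1,12}$ and $\hat{F}_{2,12}$ are naturally supported on different lines in $X(\T)=\Z^2$ (namely $(0,j+c\l)$ versus $(-j-\l c,j+\l c)$) and must be translated to a common line before being equated, which is where the shifts $i+j+\l c$ in the fine-grading of the right-hand sides come from.
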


\begin{proof}
The category of toric sheaves on $\PP$ is equivalent to the category of triples $\{\F_i\}_{i=1,2,3}$ of toric sheaves on $\U_i \cong [\C^2/\bmu_{\i}]$ satisfying the gluing conditions of Proposition \ref{glue}. In turn, a toric sheaf $\F_i$ is described (up to equivariant isomorphism) by its stacky $S$-family $\hat{F}_i$ (Proposition \ref{stackyfam}). Given toric sheaves $\{\F_i\}_{i=1,2,3}$ on $\U_i \cong[\C^2/\bmu_{\i}]$, consider the toric sheaves $\F_i|_{ij}$ on $$\U_{ij} \cong [\C^* \times \C / \bmu_{\i} \times \bmu_{\j}]$$ and compute the stacky $S$-family $\hat{F}_{i,ij}$ corresponding to $\F_i|_{ij}$. 

The case $\hat{F}_{1,12}$ was worked out above. At a point $(0,j+c\l) \in X(\T) = \Z^2$ we found its stacky $S$-family is given by
\[
\bigoplus_{\tiny{\begin{array}{c} 0 \leq i \leq b-1 \\ l \in \Z_a \end{array}}} {}_{(i/b,j/c)} F_1(\infty,\l)_{l} \otimes \mu^{-i} \otimes \nu^{i}.
\]
Similarly, the stacky $S$-family of $\hat{F}_{2,12}$ at $(-j-\l c,j+\l c) \in X(\T) = \Z^2$ is given by 
\[
\bigoplus_{\tiny{\begin{array}{c} 0 \leq i \leq a-1 \\ l \in \Z_b \end{array}}} {}_{(j/c,i/a)} F_2(\l, \infty)_{l} \otimes \nu^{-i} \otimes \mu^{i}.
\]
We cannot directly ``equate'' these two answers. This is because the first vector spaces live on the line $(0,j+c\l)$ and the second vector spaces live on the line $(-j-\l c,j+\l c) \in X(\T) = \Z^2$. Translating the last vector spaces to the line $(0,j+\l c) \in X(\T) = \Z^2$ gives the first equation. The other equations go similar. 

A priori the equalities in the theorem should be isomorphisms of graded vector spaces satisfying the cocycle identity. However, up to isomorphism (and hence up to equivalence of categories), we can choose them to be identities. \end{proof}

\section{Examples of toric sheaves on weighted projective planes}

In this section, we give some examples of toric sheaves on $\PP := \PP(a,b,c)$ as described by Theorem \ref{main}. The focus will be on rank 1 and 2 toric torsion free and locally free sheaves. \\

\noindent \textbf{Example 1}. Let $\F$ be a rank $1$ toric torsion free sheaf on $\PP^2$. In each chart $\U_i \cong \C^2$, $\F$ is described by its $S$-family $\hat{F}_i$ (Section 3)
\begin{displaymath}
\xy
(0,0)*{} ; (30,0)*{} **\dir{} ; (0,5)*{} ; (30,5)*{} **\dir{.} ; (0,10)*{} ; (30,10)*{} **\dir{.} ; (0,15)*{} ; (30,15)*{} **\dir{.} ; (0,20)*{} ; (30,20)*{} **\dir{.} ; (0,25)*{} ; (30,25)*{} **\dir{.} ; (0,30)*{} ; (30,30)*{} **\dir{} ;                   (0,0)*{} ; (0,30)*{} **\dir{} ; (5,0)*{} ; (5,30)*{} **\dir{.} ; (10,0)*{} ; (10,30)*{} **\dir{.} ; (15,0)*{} ; (15,30)*{} **\dir{.} ; (20,0)*{} ; (20,30)*{} **\dir{.} ; (25,0)*{} ; (25,30)*{} **\dir{.} ; (30,0)*{} ; (30,30)*{} **\dir{} ; (5,30)*{} ; (5,25)*{} **\dir{-} ;  (5,25)*{} ; (10,25)*{} **\dir{-} ; (10,25)*{} ; (10,20)*{} **\dir{-} ; (10,20)*{} ; (15,20)*{} **\dir{-} ; (15,20)*{} ; (15,15)*{} **\dir{-} ; (15,15)*{} ; (20,15)*{} **\dir{-} ; (20,15)*{} ; (20,5)*{} **\dir{-} ; (20,5)*{} ; (30,5)*{} **\dir{-} ;  (-5,15)*{\hat{F}_1} ; (5,5)*{\bullet} ; (0,0)*{(u_1,u_2)} ;  (25,25)*{\mathbb{C}} ;                                                                                                        ; (50,0)*{} ; (80,0)*{} **\dir{} ; (50,5)*{} ; (80,5)*{} **\dir{.} ; (50,10)*{} ; (80,10)*{} **\dir{.} ; (50,15)*{} ; (80,15)*{} **\dir{.} ; (50,20)*{} ; (80,20)*{} **\dir{.} ; (50,25)*{} ; (80,25)*{} **\dir{.} ; (50,30)*{} ; (80,30)*{} **\dir{} ;                   (50,0)*{} ; (50,30)*{} **\dir{} ; (55,0)*{} ; (55,30)*{} **\dir{.} ; (60,0)*{} ; (60,30)*{} **\dir{.} ; (65,0)*{} ; (65,30)*{} **\dir{.} ; (70,0)*{} ; (70,30)*{} **\dir{.} ; (75,0)*{} ; (75,30)*{} **\dir{.} ; (80,0)*{} ; (80,30)*{} **\dir{} ;  (55,30)*{} ; (55,15)*{} **\dir{-} ;  (55,15)*{} ; (65,15)*{} **\dir{-} ; (65,15)*{} ; (65,10)*{} **\dir{-} ; (65,10)*{} ; (70,10)*{} **\dir{-} ; (70,10)*{} ; (70,5)*{} **\dir{-} ; (70,5)*{} ; (80,5)*{} **\dir{-} ;  (45,15)*{\hat{F}_2} ; (55,5)*{\bullet} ; (50,0)*{(u_3,u_4)} ;  (75,25)*{\mathbb{C}} ;                                                                                                                                                                                                                                   ; (100,0)*{} ; (130,0)*{} **\dir{} ; (100,5)*{} ; (130,5)*{} **\dir{.} ; (100,10)*{} ; (130,10)*{} **\dir{.} ; (100,15)*{} ; (130,15)*{} **\dir{.} ; (100,20)*{} ; (130,20)*{} **\dir{.} ; (100,25)*{} ; (130,25)*{} **\dir{.} ; (100,30)*{} ; (130,30)*{} **\dir{} ;                   (100,0)*{} ; (100,30)*{} **\dir{} ; (105,0)*{} ; (105,30)*{} **\dir{.} ; (110,0)*{} ; (110,30)*{} **\dir{.} ; (115,0)*{} ; (115,30)*{} **\dir{.} ; (120,0)*{} ; (120,30)*{} **\dir{.} ; (125,0)*{} ; (125,30)*{} **\dir{.} ; (130,0)*{} ; (130,30)*{} **\dir{} ;  (105,30)*{} ; (105,10)*{} **\dir{-} ; (105,10)*{} ; (120,10)*{} **\dir{-} ;  (120,10)*{} ; (120,5)*{} **\dir{-} ; (120,5)*{} ; (130,5)*{} **\dir{-} ;              (95,15)*{\hat{F}_3} ; (105,5)*{\bullet} ; (100,0)*{(u_5,u_6)} ;  (125,25)*{\mathbb{C}}                                                                                                                                 
\endxy 
\end{displaymath}
where on and above the solid line all weight spaces are $\C$ and all maps are the identity and below the solid line all weight spaces and maps are $0$. The solid dot indicates an element $(u_j, u_k) \in \Z^2$ and determines the position of the partition. According to Theorem \ref{main}, this data to a toric sheaf if and only if 
\[
u_2 = u_3, \ u_4 = u_5, \ u_6 = u_1.  
\]

\noindent \textbf{Example 2}. Let $\F$ be a rank 1 toric torsion free sheaf on $\PP(1,1,2)$. The charts $\U_1$ and $\U_2$ have a box of size $2$, whereas $\U_3$ has a trivial box. Since the rank is 1, for each $i=1,2,3$ there is only one $b_i \in \cB_\T$ for which the stacky $S$-family ${}_{b_i}\hat{F}_i$ is non-zero (Section 3). A priori this gives $2\cdot 2 = 4$ choices of $b_i$'s. The gluing conditions of Theorem \ref{main} reduce this to two choices
\[
b_1 = (0,i/2), \ b_2 = (i/2,0), \ b_3 = (0,0), \ i \in \{0,1\}.
\]
Fix one of these two choices. On each chart $\U_i$, the $S$-family ${}_{b_i}\hat{F}_i$ is given by the same data as in the previous example. Here we temporarily ignore the fact that the weight spaces in $\U_3$ carry a $\bmu_2$-action which we address in a moment. This local data is described by three partitions and six integers $u_1, \ldots, u_6 \in \Z$. The gluing conditions of Theorem \ref{main} impose $u_2 = u_3, \ u_4 = u_5, \ u_6 = u_1$ as in the previous example. On the chart $\U_3$, each weight space carries a $\bmu_2$-action. Fixing the weight of one non-zero weight space determines the weights of all other weight spaces. Since $\bmu_2$ acts by $(-1,-1)$, the weights alternate as in Example 1 of Section 3. For fixed partitions and integers $u_i$ as above, this a priori gives two choices  of $\bmu_2$-actions. However, the gluing conditions of Theorem \ref{main} allow precisely one choice! \\

\noindent \textbf{Example 3}. Let $\F$ be a rank 1 toric torsion free sheaf on $\PP(2,2,2)$. This time, each chart $\U_i$ has a box of size $2\cdot 2=4$. As before, for each $i=1,2,3$ there is only one $b_i \in \cB_\T$ for which ${}_{b_i}\hat{F}_i$ is non-zero (Section 3). A priori this gives $4\cdot 4\cdot 4 = 64$ choices of $b_i$'s. The gluing conditions of Theorem \ref{main} reduce this to eight choices
\[
b_1 = (i/2,j/2), \ b_2 = (j/2,k/2), \ b_3 = (k/2,i/2), \ i,j,k \in \{0,1\}.
\]
Fix any of these eight choices. On each chart $\U_i$, the $S$-family ${}_{b_i}\hat{F}_i$ is given by the same data as in the previous two examples (again, first ignoring the $\bmu_2$-actions). This local data  is described by three partitions and six integers $u_1, \ldots, u_6 \in \Z$. The gluing conditions of Theorem \ref{main} impose $u_2 = u_3, \ u_4 = u_5, \ u_6 = u_1$ as before. On each chart $\U_i$, the weight spaces carry a $\bmu_2$-action. Since the finite groups $\bmu_2$ act trivially, all weight spaces on a fixed chart $\U_i$ have the same character. For fixed partitions and integers $u_i$, this a priori gives $2\cdot2\cdot2=8$ choices  of $\bmu_2$-actions. However, the gluing conditions of Theorem \ref{main} again allow precisely one choice! \\

From Examples 1,2 and 3 the general picture of rank 1 toric torsion free sheaves on any $\PP$ is easily obtained. We start by describing toric line bundles. A toric line bundle $L$ is locally described by three stacky $S$-families $\hat{L}_i$ and for each $i=1,2,3$ there is only one $b_i \in \cB_\T$ for which ${}_{b_i}\hat{L}_{i}$ is non-zero. A priori this gives $bc+ac+ba$ choices, but the gluing conditions of Theorem \ref{main} reduce this to one of the following $abc$ choices
\begin{align*}
&b_1=(i/b,j/c), \ b_2 = (j/c,k/a), \ b_3=(k/a,i/b), \\ 
&0 \leq i \leq b-1, \ 0 \leq j \leq c-1, \ 0 \leq k \leq a-1. 
\end{align*}
Fix any such choice $i,j,k$. On each chart $\U_i$, the $S$-family ${}_{b_i}\hat{L}_i$ is fully determined by the weight of its unique homogeneous generator, i.e.~by two integers. Here we ignore the $\bmu_{\hat{i}}$-actions for the moment. A priori this gives six integers to choose, but the gluing conditions of Theorem \ref{main} reduce this to three. In particular, there are unique integers\footnote{In Examples 1--3 of this section $I,J,K$ are respectively denoted by $u_1,u_3,u_5$.} $I,J,K$ such that the homogeneous generators have torus weight
\begin{align*}
(i/b+I) m_1 + (j/c+J)m_2 &\in X(\T), \ \mathrm{where} \ (m_1,m_2) := ((b,0),(0,c)) \\
(j/c+J)m_1 + (k/a+K)m_2 &\in X(\T), \ \mathrm{where} \ (m_1,m_2) := ((-c,c),(-a,0)) \\
(k/a+K)m_1 + (i/b+I)m_2 &\in X(\T), \ \mathrm{where} \ (m_1,m_2) := ((0,-a),(b,-b)).
\end{align*}
Recall that in each chart $\U_i$ we use the ``basis'' $(m_1,m_2)$ indicated above (Section 5.2).
Defining 
\begin{equation} \label{ui}
u_1 = i+Ib, \ u_2 = j+Jc, \ u_3 = k+Ka,
\end{equation}
we see that the data so far is uniquely determined by the specification of three integers $u_1,u_2,u_3$. In other words, specifying $u_1,u_2,u_3 \in \Z$ determines the positions of the homogeneous generators in the charts $\U_1, \U_2,\U_3$ according to the above recipe. So far we have ignored the $\bmu_{\hat{i}}$-actions. For fixed $u_1,u_2,u_3 \in \Z$, there are unique finite group weights of the homogeneous generators for which the data glues to a toric line bundle according to Theorem \ref{main}, namely
 \begin{align*}
u_1+u_2+u_3 &\in \Z_a \ \mathrm{on} \ \U_1, \\
u_1+u_2+u_3 &\in \Z_b \ \mathrm{on} \ \U_2, \\
u_1+u_2+u_3 &\in \Z_c \ \mathrm{on} \ \U_3.
 \end{align*}
We denote the toric line bundle corresponding to $(u_1,u_2,u_3) \in \Z^3$ by $L_{(u_1,u_2,u_3)}$ and its stacky $S$-families by $\hat{L}_{(u_1,u_2,u_3),i}$. From the description of the grading on a tensor product (Section 5.1), it is easy to see that
\[
L_{(u_1,u_2,u_3)} \otimes L_{(u_{1}',u_{2}',u_{3}')} \cong L_{(u_1+u_{1}',u_2+u_{2}',u_3+u_{3}')}.
\]
\begin{proposition} \label{eqpic}
Let $\Pic^\T(\PP)$ be the equivariant Picard group of $\PP$. Then 
\[
(u_1,u_2,u_3) \in \Z^3 \mapsto L_{(u_1,u_2,u_3)} \in \Pic^\T(\PP)
\]
is a group isomorphism.
\end{proposition}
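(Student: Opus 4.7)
The plan is to verify that the map $\Phi:(A,B,C)\mapsto L_{(A,B,C)}$ is a well-defined group homomorphism which is both surjective and injective.

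First I would argue that $\Phi$ is a group homomorphism. This is immediate from the tensor product identity
\[
L_{(A,B,C)} \otimes L_{(A',B',C')} \cong L_{(A+A',B+B',C+C')}
\]
recorded just before the statement, which in turn follows from the description of grading on a tensor product in Section 5.1 together with the formula for the stacky $S$-families $\hat{L}_{(A,B,C),i}$ constructed above the proposition. The identity element $L_{(0,0,0)}$ is the structure sheaf $\O_\PP$ with its canonical $\T$-equivariant structure, so the map is compatible with inverses as well.

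Next I would establish surjectivity. The paragraphs preceding the proposition already give a classification: any toric line bundle $L$ restricts on each chart $\U_i$ to a stacky $S$-family $\hat{L}_i$ supported in exactly one box element $b_i$, and the gluing conditions of Theorem \ref{main} force $b_1,b_2,b_3$ to fit together in the $abc$-parameter family listed. Once the box elements are fixed, the weights of the (unique up to scalar) homogeneous generators on the three charts are determined by three integers $I,J,K$, and these combine with the box data into the triple $A=i+Ib$, $B=j+Jc$, $C=k+Ka$. Finally the additional $X(\bmu_{\hat i})$ fine-grading of each generator is forced by the gluing conditions to equal the residues of $A+B+C$ modulo $a,b,c$ respectively. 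Thus $L\cong L_{(A,B,C)}$ as a $\T$-equivariant line bundle, giving surjectivity of $\Phi$.

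For injectivity, I would show that the triple $(A,B,C)$ is read off intrinsically from the equivariant data of $L_{(A,B,C)}$: on each chart the homogeneous generator is unique up to a nonzero scalar, so its $X(\T)$-weight is canonically determined. An equivariant isomorphism $L_{(A,B,C)}\cong L_{(A',B',C')}$ must therefore match these weights on all three charts, and inspection of the weight formulas above shows $(A,B,C)=(A',B',C')$. I expect the only mildly subtle point — and the main obstacle — to be checking that the fine-grading congruences modulo $a,b,c$ do not cause collapsing in the isomorphism class; this is however resolved by the observation that the three weights on $\U_1,\U_2,\U_3$ together lie in the integer lattice $X(\T)=\Z^2$ and not merely in $X(\T)\otimes\Q$, so no extra identifications occur. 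Combining these three steps yields the desired group isomorphism $\Z^3\cong \Pic^\T(\PP)$.
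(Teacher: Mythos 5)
Your proposal is correct and follows essentially the same route as the paper, which in fact gives no separate proof: the proposition is stated as an immediate consequence of the classification of toric line bundles by triples $(A,B,C)\in\Z^3$ (together with the uniquely determined fine-gradings) and the tensor product identity $L_{(A,B,C)}\otimes L_{(A',B',C')}\cong L_{(A+A',B+B',C+C')}$ established in the paragraphs just before the statement. Your write-up merely makes the surjectivity and injectivity steps explicit (e.g.\ reading off $(A,B)$ and $-B-C$ from the generator weights on $\U_1$ and $\U_2$), which is a faithful expansion of the paper's implicit argument.
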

Recall that the (non-equivariant) Picard group of $\PP$ is $\Z$ \cite[Ex.~7.27]{FMN}. A rank 1 toric torsion free sheaf on $\PP$ is described by the same data as a toric line bundle on $\PP$ except that on each chart $\U_i$ a partition is cut out in the corner as in Examples 1,2 and 3. We now turn to rank 2. \\

\noindent \textbf{Example 4}. Let $\F$ be a rank 2 toric torsion free sheaf on $\PP^2$  (see also \cite{Kly2} or \cite{Koo2}). In each chart $\U_i \cong \C^2$, $\F$ is described by a double filtration of $\C^2$ (Section 3): 
\begin{displaymath}
\xy
(0,0)*{} ; (30,0)*{} **\dir{} ; (0,5)*{} ; (30,5)*{} **\dir{.} ; (0,10)*{} ; (30,10)*{} **\dir{.} ; (0,15)*{} ; (30,15)*{} **\dir{.} ; (0,20)*{} ; (30,20)*{} **\dir{.} ; (0,25)*{} ; (30,25)*{} **\dir{.} ; (0,30)*{} ; (30,30)*{} **\dir{} ;                   (0,0)*{} ; (0,30)*{} **\dir{} ; (5,0)*{} ; (5,30)*{} **\dir{.} ; (10,0)*{} ; (10,30)*{} **\dir{.} ; (15,0)*{} ; (15,30)*{} **\dir{.} ; (20,0)*{} ; (20,30)*{} **\dir{.} ; (25,0)*{} ; (25,30)*{} **\dir{.} ; (30,0)*{} ; (30,30)*{} **\dir{} ; (5,30)*{} ; (5,25)*{} **\dir{-} ;  (5,25)*{} ; (10,25)*{} **\dir{-} ; (10,25)*{} ; (10,20)*{} **\dir{-} ; (10,20)*{} ; (15,20)*{} **\dir{-} ; (15,20)*{} ; (15,15)*{} **\dir{-} ; (15,15)*{} ; (20,15)*{} **\dir{-} ; (20,15)*{} ; (20,5)*{} **\dir{-} ; (20,5)*{} ; (30,5)*{} **\dir{-} ;  (-5,15)*{\hat{F}_1} ; (5,5)*{\bullet} ; (0,0)*{(u_1,u_2)} ;   (10,30)*{} ; (10,25)*{} **\dir{=} ; (10,25)*{} ; (15,25)*{} **\dir{=} ; (15,25)*{} ; (15,20)*{} **\dir{=} ; (15,20)*{} ; (25,20)*{} **\dir{=} ;     (25,20)*{} ; (25,15)*{} **\dir{=} ; (25,15)*{} ; (30,15)*{} **\dir{=}  ; (7.5,27.5)*{p_1} ;  (25,10)*{p_2} ;  (7.5,35)*{v_1} ; (35,10)*{v_2} ; (12.5,22.5)*{q_1} ;  (25,25)*{\mathbb{C}^2} ; (5,32)*{} ; (10,32)*{} **\dir{-} ; (32,15)*{} ; (32,5)*{} **\dir{-} ;                                                                                            ; (45,0)*{} ; (75,0)*{} **\dir{} ; (45,5)*{} ; (75,5)*{} **\dir{.} ; (45,10)*{} ; (75,10)*{} **\dir{.} ; (45,15)*{} ; (75,15)*{} **\dir{.} ; (45,20)*{} ; (75,20)*{} **\dir{.} ; (45,25)*{} ; (75,25)*{} **\dir{.} ; (45,30)*{} ; (75,30)*{} **\dir{} ;                   (45,0)*{} ; (45,30)*{} **\dir{} ; (50,0)*{} ; (50,30)*{} **\dir{.} ; (55,0)*{} ; (55,30)*{} **\dir{.} ; (60,0)*{} ; (60,30)*{} **\dir{.} ; (65,0)*{} ; (65,30)*{} **\dir{.} ; (70,0)*{} ; (70,30)*{} **\dir{.} ; (75,0)*{} ; (75,30)*{} **\dir{} ;  (50,30)*{} ; (50,20)*{} **\dir{-} ; (50,20)*{} ; (55,20)*{} **\dir{-} ; (55,20)*{} ; (55,15)*{} **\dir{-} ; (55,15)*{} ; (60,15)*{} **\dir{-} ; (60,15)*{} ; (60,10)*{} **\dir{-} ; (60,10)*{} ; (65,10)*{} **\dir{-} ; (65,10)*{} ; (65,5)*{} **\dir{-} ; (65,5)*{} ; (75,5)*{} **\dir{-} ;  (40,15)*{\hat{F}_2} ; (50,5)*{\bullet} ; (45,0)*{(u_3,u_4)} ;  (60,30)*{} ; (60,15)*{} **\dir{=} ;  (60,15)*{} ; (65,15)*{} **\dir{=} ; (65,15)*{} ; (65,10)*{} **\dir{=} ; (65,10)*{}; (75,10)*{} **\dir{=} ;  (55,25)*{p_3} ;  (62.5,12.5)*{q_2} ; (70,7.5)*{p_4};  (55,35)*{v_3} ; (80,7.5)*{v_4} ;  (70,25)*{\mathbb{C}^2} ;  (50,32)*{} ; (60,32)*{} **\dir{-} ; (77,10)*{} ; (77,5)*{} **\dir{-} ;                                                                                                                                                                                                                        ; (95,0)*{} ; (125,0)*{} **\dir{} ; (95,5)*{} ; (125,5)*{} **\dir{.} ; (95,10)*{} ; (125,10)*{} **\dir{.} ; (95,15)*{} ; (125,15)*{} **\dir{.} ; (95,20)*{} ; (125,20)*{} **\dir{.} ; (95,25)*{} ; (125,25)*{} **\dir{.} ; (95,30)*{} ; (125,30)*{} **\dir{} ;                   (95,0)*{} ; (95,30)*{} **\dir{} ; (100,0)*{} ; (100,30)*{} **\dir{.} ; (105,0)*{} ; (105,30)*{} **\dir{.} ; (110,0)*{} ; (110,30)*{} **\dir{.} ; (115,0)*{} ; (115,30)*{} **\dir{.} ; (120,0)*{} ; (120,30)*{} **\dir{.} ; (125,0)*{} ; (125,30)*{} **\dir{} ;  (100,30)*{} ; (100,10)*{} **\dir{-} ; (100,10)*{} ; (110,10)*{} **\dir{-} ;  (115,10)*{} ; (115,5)*{} **\dir{-} ; (115,5)*{} ; (125,5)*{} **\dir{-} ;              (90,15)*{\hat{F}_3} ; (100,5)*{\bullet} ; (95,0)*{(u_5,u_6)} ; (105,30)*{} ; (105,20)*{} **\dir{=} ;  (105,20)*{} ; (110,20)*{} **\dir{=} ; (110,20)*{} ; (110,10)*{} **\dir{=} ;  (110,10)*{} ; (125,10)*{} **\dir{=} ; (105,15)*{p_5} ; (120,7.5)*{p_6} ;  (102.5,35)*{v_5} ; (130,7.5)*{v_6} ; (120,25)*{\mathbb{C}^{2}} ; (100,32)*{} ; (105,32)*{} **\dir{-} ; (127,10)*{} ; (127,5)*{} **\dir{-} ;                                                                                                                          
\endxy 
\end{displaymath}
Here $u_1, \ldots, u_6 \in \Z$ fix the position of the filtrations and $v_1, \ldots, v_6 \in \Z_{\geq 0}$ are the widths of the indicated regions. Furthermore, $p_1, \ldots, p_6, q_1, q_2 \subset \C^2$ are $1$-dimensional linear subspaces. In each region, the vector spaces on the associated lattice points are as indicated and all maps are inclusions. Also, the lattice points below the single solid line have 0 as their associated vector space. According to Theorem \ref{main}, this glues to a toric sheaf if and only if
\begin{align*}
&u_2 = u_3, \ u_4 = u_5, \ u_6 = u_1, \ v_2 = v_3, \ v_4 = v_5, \ v_6 = v_1, \\
&p_2 = p_3 \subset \C^2, \ p_4 = p_5 \subset \C^2, \ p_6 = p_1 \subset \C^2.
\end{align*}
Note that the vector spaces $q_1, q_2$ are not involved in the gluing conditions. A general rank 2 toric torsion free sheaf can have many more (or none) $q_i$'s sitting in the corners. Also, the widths $v_i$ are allowed to be 0 in which case the corresponding $p_i$'s do not appear in the gluing conditions. Rank 2 toric locally free sheaves are fully determined by integers $u_1,u_2,u_3 \in \Z$, $v_1,v_2,v_3 \in \Z_{\geq 0}$ and a point $(p_1,p_2,p_3) \in (\PP^1)^3$ (Section 3, Equation (\ref{reflexive})). Abbreviating $p_{ij}:=p_i \cap p_j \subset \C^2$, a typical rank 2 toric locally free sheaf looks like:
\begin{displaymath}
\xy
(0,0)*{} ; (30,0)*{} **\dir{} ; (0,5)*{} ; (30,5)*{} **\dir{.} ; (0,10)*{} ; (30,10)*{} **\dir{.} ; (0,15)*{} ; (30,15)*{} **\dir{.} ; (0,20)*{} ; (30,20)*{} **\dir{.} ; (0,25)*{} ; (30,25)*{} **\dir{.} ; (0,30)*{} ; (30,30)*{} **\dir{} ;                   (0,0)*{} ; (0,30)*{} **\dir{} ; (5,0)*{} ; (5,30)*{} **\dir{.} ; (10,0)*{} ; (10,30)*{} **\dir{.} ; (15,0)*{} ; (15,30)*{} **\dir{.} ; (20,0)*{} ; (20,30)*{} **\dir{.} ; (25,0)*{} ; (25,30)*{} **\dir{.} ; (30,0)*{} ; (30,30)*{} **\dir{} ;  (-5,15)*{\hat{F}_1} ; (5,5)*{\bullet} ; (0,0)*{(u_1,u_2)} ;  (5,30)*{} ; (5,15)*{} **\dir{-} ; (5,15)*{} ; (5,5)*{} **\dir{~} ;  (5,5)*{} ; (10,5)*{} **\dir{~} ; (10,5)*{} ; (30,5)*{} **\dir{-} ; (10,30)*{} ; (10,15)*{} **\dir{=} ; (10,15)*{} ; (10,5)*{} **\dir{-} ; (5,15)*{} ; (10,15)*{} **\dir{-} ; (10,15)*{} ; (30,15)*{} **\dir{=} ;  (7.5,25)*{p_1} ; (7.5,10)*{p_{12}} ;  (25,10)*{p_2} ;  (7.5,35)*{v_1} ;  (35,10)*{v_2} ;  (25,25)*{\mathbb{C}^{2}} ; (5,32)*{} ; (10,32)*{} **\dir{-} ; (32,15)*{} ; (32,5)*{} **\dir{-} ;                                                                                         (45,0)*{} ; (75,0)*{} **\dir{} ; (45,5)*{} ; (75,5)*{} **\dir{.} ; (45,10)*{} ; (75,10)*{} **\dir{.} ; (45,15)*{} ; (75,15)*{} **\dir{.} ; (45,20)*{} ; (75,20)*{} **\dir{.} ; (45,25)*{} ; (75,25)*{} **\dir{.} ; (45,30)*{} ; (75,30)*{} **\dir{} ;  (45,0)*{} ; (45,30)*{} **\dir{} ; (50,0)*{} ; (50,30)*{} **\dir{.} ; (55,0)*{} ; (55,30)*{} **\dir{.} ; (60,0)*{} ; (60,30)*{} **\dir{.} ; (65,0)*{} ; (65,30)*{} **\dir{.} ; (70,0)*{} ; (70,30)*{} **\dir{.} ; (75,0)*{} ; (75,30)*{} **\dir{} ;  (40,15)*{\hat{F}_2} ; (50,5)*{\bullet} ; (45,0)*{(u_2,u_3)} ;   (50,30)*{} ; (50,20)*{} **\dir{-} ; (50,20)*{} ; (50,5)*{} **\dir{~} ; (50,5)*{} ; (60,5)*{} **\dir{~} ; (60,5)*{} ; (75,5)*{} **\dir{-} ; (60,30)*{} ; (60,20)*{} **\dir{=} ; (60,20)*{} ; (60,5)*{} **\dir{-} ; (50,20)*{} ; (60,20)*{} **\dir{-} ; (60,20)*{} ; (75,20)*{} **\dir{=} ;  (55,25)*{p_2} ; (55,12.5)*{p_{23}} ;  (70,12.5)*{p_3} ;  (55,35)*{v_2} ;  (80,12.5)*{v_3}  ; (70,25)*{\mathbb{C}^{2}} ; (50,32)*{} ; (60,32)*{} **\dir{-} ; (77,20)*{} ; (77,5)*{} **\dir{-} ;                                                                                                                                                                                                                                  ; (95,0)*{} ; (125,0)*{} **\dir{} ; (95,5)*{} ; (125,5)*{} **\dir{.} ; (95,10)*{} ; (125,10)*{} **\dir{.} ; (95,15)*{} ; (125,15)*{} **\dir{.} ; (95,20)*{} ; (125,20)*{} **\dir{.} ; (95,25)*{} ; (125,25)*{} **\dir{.} ; (95,30)*{} ; (125,30)*{} **\dir{} ;                   (95,0)*{} ; (95,30)*{} **\dir{} ; (100,0)*{} ; (100,30)*{} **\dir{.} ; (105,0)*{} ; (105,30)*{} **\dir{.} ; (110,0)*{} ; (110,30)*{} **\dir{.} ; (115,0)*{} ; (115,30)*{} **\dir{.} ; (120,0)*{} ; (120,30)*{} **\dir{.} ; (125,0)*{} ; (125,30)*{} **\dir{} ;   (90,15)*{\hat{F}_3} ; (95,5)*{\bullet} ; (95,0)*{(u_3,u_1)} ;    (100,30)*{} ; (100,10)*{} **\dir{-} ; (100,10)*{} ; (100,5)*{} **\dir{~} ; (100,5)*{} ; (115,5)*{} **\dir{~} ; (115,5)*{} ; (125,5)*{} **\dir{-} ; (115,30)*{} ; (115,10)*{} **\dir{=} ; (115,10)*{} ; (115,5)*{} **\dir{-} ;  (100,10)*{} ; (115,10)*{} **\dir{-} ; (115,10)*{} ; (125,10)*{} **\dir{=} ;  (107.5,25)*{p_3} ; (107.5,7.5)*{p_{13}} ;  (120,7.5)*{p_1} ;  (107.5,35)*{v_3} ;  (130,7.5)*{v_1} ; (120,25)*{\mathbb{C}^{2}} ; (100,32)*{} ; (115,32)*{} **\dir{-} ; (127,10)*{} ; (127,5)*{} **\dir{-} ;                                                                                                                               
\endxy 
\end{displaymath}
Here the regions labelled by $p_{ij}$ either have associated vector space $0$ (if $p_i \neq p_j$) or $p_i=p_j \subset \C^2$ on the lattice points. (So the wiggly line is either not present or a single solid line.) Moreover, the widths $v_i$ are allowed to be 0, in which case the corresponding $p_i$'s do not appear. \\

\noindent \textbf{Example 5}. Let $\F$ be a rank 2 toric torsion free sheaf on $\PP(1,1,2)$. We focus on the case $\F$ is locally free. The description of rank 2 toric torsion free sheaves can be deduced by the reader. The stacky $S$-families decompose according to the box elements $$\hat{F}_1 = {}_{(0,0)} \hat{F}_1 \oplus {}_{(0,1/2)} \hat{F}_1, \ \hat{F}_2 = {}_{(0,0)} \hat{F}_2 \oplus {}_{(1/2,0)} \hat{F}_2, \ \hat{F}_3 = {}_{(0,0)} \hat{F}_3.$$ Since the rank is 2, either each summand is a rank 1 toric line bundle or one summand is zero and the other is a rank 2 toric locally free sheaf. We distinguish the following cases: 
 \begin{enumerate}
 \item [(I)] One box summand of $\hat{F}_i$ is zero for both $i=1,2$. Let ${}_{b_i}\hat{F}_i \neq 0$ then
 \[
 b_1 = (0,i/2), \ b_2=(i/2,0), \ b_3=(0,0), \ i \in\{0,1\},
 \] 
so there are two choices. Fix the $b_i$'s. Disregarding the $\bmu_2$-actions for the moment, the $S$-families $\hat{F}_i$ are exactly as in the case of toric locally free sheaves in the previous example, i.e.~determined by integers $u_1,u_2,u_3 \in \Z$, $v_1,v_2,v_3 \in \Z_{\geq 0}$ and a point $(p_1,p_2,p_3) \in (\PP^1)^3$. Again, the widths $v_i$ are allowed to be 0, in which case the corresponding $p_i$'s do not appear. For any such fixed choice, there is a unique choice of $\bmu_2$-actions on the weight spaces of $\hat{F}_3$ making this into a stacky $S$-family satisfying the gluing conditions of Theorem \ref{main}. For this choice, the limiting vector spaces $\C^2$ do \emph{not} decompose as the sum of 1-dimensional spaces with different weights. 
 \item [(II)] None of the ${}_{b_i}\hat{F}_i$ is zero. Start by ignoring the $\bmu_2$-actions. The $S$-families ${}_{(0,0)}\hat{F}_{1}$ and ${}_{(0,0)}\hat{F}_{2}$ look like
 \begin{displaymath}
\xy
(0,0)*{} ; (30,0)*{} **\dir{} ; (0,5)*{} ; (30,5)*{} **\dir{.} ; (0,10)*{} ; (30,10)*{} **\dir{.} ; (0,15)*{} ; (30,15)*{} **\dir{.} ; (0,20)*{} ; (30,20)*{} **\dir{.} ; (0,25)*{} ; (30,25)*{} **\dir{.} ; (0,30)*{} ; (30,30)*{} **\dir{} ;                   (0,0)*{} ; (0,30)*{} **\dir{} ; (5,0)*{} ; (5,30)*{} **\dir{.} ; (10,0)*{} ; (10,30)*{} **\dir{.} ; (15,0)*{} ; (15,30)*{} **\dir{.} ; (20,0)*{} ; (20,30)*{} **\dir{.} ; (25,0)*{} ; (25,30)*{} **\dir{.} ; (30,0)*{} ; (30,30)*{} **\dir{} ;  (-5,15)*{{}_{(0,0)}\hat{F}_1} ; (5,5)*{\bullet} ; (0,0)*{(u_1,u_2)} ;  (5,30)*{} ; (5,5)*{} **\dir{-} ;  (5,5)*{} ; (30,5)*{} **\dir{-} ;  (25,25)*{p} ;                                                                                         (50,0)*{} ; (80,0)*{} **\dir{} ; (50,5)*{} ; (80,5)*{} **\dir{.} ; (50,10)*{} ; (80,10)*{} **\dir{.} ; (50,15)*{} ; (80,15)*{} **\dir{.} ; (50,20)*{} ; (80,20)*{} **\dir{.} ; (50,25)*{} ; (80,25)*{} **\dir{.} ; (50,30)*{} ; (80,30)*{} **\dir{} ;  (50,0)*{} ; (50,30)*{} **\dir{} ; (55,0)*{} ; (55,30)*{} **\dir{.} ; (60,0)*{} ; (60,30)*{} **\dir{.} ; (65,0)*{} ; (65,30)*{} **\dir{.} ; (70,0)*{} ; (70,30)*{} **\dir{.} ; (75,0)*{} ; (75,30)*{} **\dir{.} ; (80,0)*{} ; (80,30)*{} **\dir{} ;  (45,15)*{{}_{(0,0)}\hat{F}_2} ; (55,5)*{\bullet} ; (50,0)*{(u_2,u_3)} ;   (55,30)*{} ; (55,5)*{} **\dir{-} ;  (55,5)*{} ; (80,5)*{} **\dir{-} ;  (75,25)*{p}           
\endxy 
\end{displaymath}
where $p$ is a 1-dimensional vector space and $u_1,u_2,u_3 \in \Z$. Here we have already glued ${}_{(0,0)}\hat{F}_{1}$ and ${}_{(0,0)}\hat{F}_{2}$  along $\U_{12}$. A similar picture holds for ${}_{(0,1/2)}\hat{F}_1$ and ${}_{(1/2,0)}\hat{F}_2$ replacing $p,u_i$ by $p', u_{i}'$. It is easy to work out what the rest of the gluing conditions for this locally free sheaf are. We do one case. Assume $u_1 < u_{1}'$, $u_2 > u_{2}'$ and $u_3>u_{3}'$. In order to satisfy the gluing conditions (still ignoring the $\bmu_2$-actions), the vector spaces $p,p'$ must be distinct linear subspaces of $\C^2$ (so $\C^2 = p \oplus p'$) and the $S$-family $\hat{F}_3$ must look as follows:  
\begin{displaymath}
\xy
(100,0)*{} ; (130,0)*{} **\dir{} ; (100,5)*{} ; (130,5)*{} **\dir{.} ; (100,10)*{} ; (130,10)*{} **\dir{.} ; (100,15)*{} ; (130,15)*{} **\dir{.} ; (100,20)*{} ; (130,20)*{} **\dir{.} ; (100,25)*{} ; (130,25)*{} **\dir{.} ; (100,30)*{} ; (130,30)*{} **\dir{} ;                   (100,0)*{} ; (100,30)*{} **\dir{} ; (105,0)*{} ; (105,30)*{} **\dir{.} ; (110,0)*{} ; (110,30)*{} **\dir{.} ; (115,0)*{} ; (115,30)*{} **\dir{.} ; (120,0)*{} ; (120,30)*{} **\dir{.} ; (125,0)*{} ; (125,30)*{} **\dir{.} ; (130,0)*{} ; (130,30)*{} **\dir{} ;   (95,15)*{\hat{F}_3} ; (105,5)*{\bullet} ; (100,0)*{(u_{3}',u_1)} ;    (105,30)*{} ; (105,10)*{} **\dir{-} ;  (105,5)*{} ; (130,5)*{} **\dir{} ; (115,30)*{} ; (115,10)*{} **\dir{=} ; (115,10)*{} ; (130,10)*{} **\dir{=} ; (115,10)*{} ; (115,5)*{} **\dir{-} ;  (105,10)*{} ; (115,10)*{} **\dir{-} ;   (110,35)*{u_3-u_{3}'} ;  (140,7.5)*{u_{1}'-u_1} ; (115,5)*{} ; (130,5)*{} **\dir{-} ;  (125,7.5)*{p} ; (110,25)*{p'} ;(125,25)*{\C^2}  ; (105,32)*{} ; (115,32)*{} **\dir{-} ; (132,10)*{} ; (132,5)*{} **\dir{-} ;                                                                                                       
\endxy 
\end{displaymath}
From Theorem \ref{main} one can see that there is a unique choice of $\bmu_2$-actions on the weight spaces which makes this into a rank 2 toric locally free sheaf on $\PP(1,1,2)$. Such toric locally free sheaves are always equivariantly \emph{decomposable} as a sum of two toric line bundles. Even when ignoring the $\bmu_2$-actions, there is no analog to this kind of toric locally free sheaf on $\PP^2$. All other type II cases go similar and are also equivariantly decomposable as a direct sum of two toric line bundles. \\
\end{enumerate}

\noindent \textbf{Example 6}. Let $\F$ be a rank 2 toric torsion free sheaf on $\PP(2,2,2)$. Again, we focus on the case $\F$ is locally free and leave the torsion free case to the reader. Let $\hat{F}_i$ be the stacky $S$-families. There are three cases:
\begin{enumerate}
\item[(I)] Exactly one box summand of $\hat{F}_i$ is non-zero for each $i=1,2,3$. A priori there are $4\cdot4\cdot4=64$ choices of $b_i$'s, but only $4\cdot2\cdot1=8$ of them satisfy the gluing conditions of Theorem \ref{main}. The rest of this case is exactly as in Example 5.I. In particular, the limiting vector spaces $\C^2$ do not decompose into 1-dimensional spaces with different $\bmu_2$-weights. 
\item[(II)] There is only one $i=1,2,3$ for which only one box summand of $\hat{F}_i$ is non-zero. Say without loss of generality $i=3$.  A priori there are $6\cdot6\cdot4=144$ choices of $b_i$'s, but only four of them satisfy the gluing conditions of Theorem \ref{main}. The rest is as in Example 5.II. In particular, $\F$ is globally the direct sum of two toric line bundles. 
\item[(III)] For each $i=1,2,3$ two box summands of $\hat{F}_i$ are non-zero. Again, $\F$ is globally the direct sum of two toric line bundles. \\
\end{enumerate}

\begin{remark} Example 6 readily generalizes to a classification of rank 2 toric locally free sheaves on any $\PP$. In particular, we can always distinguish types I, II, III. Types II and III are always equivariantly decomposable as a direct sum of two toric line bundles whereas type I could be equivariantly indecomposable. Rank 2 toric torsion free sheaves are obtained from this description by putting 1-dimensional vector spaces $q_i$ ``in the corners'' much like in Example 4. 

The reader might have noticed that for all rank 1 and 2 toric torsion free sheaves on $\PP$, we first glued sheaves according to Theorem \ref{main} whilst completely ignoring the finite group actions. This is much simpler. Then we noted there exist \emph{unique} finite group actions on the weight spaces which glue this into a toric sheaf. We expect this to be true for toric torsion free sheaves of any rank $r>0$. However, this is not true for general toric coherent sheaves. For example, take $\PP(1,1,2)$ and take the zero sheaf on charts $\U_1, \U_2$ and put a single $\C$ on an arbitrary lattice point of $X(\T)$ associated to $\U_3$ and zeros elsewhere. In this case there are no gluing conditions and we can endow $\C$ with either weight $0$ or $1$ with respect to the $\bmu_2$-action. 
\end{remark}

\section{Rank 1 and 2 torsion free sheaves on weighted projective planes} \label{sec:rank 1}

Let $\X$ be a smooth proper DM stack. Let $\pi : \X \rightarrow X$ be the map to the coarse moduli scheme and assume $X$ is projective. Fix a polarization $\O_X(1)$ on $X$ and a \emph{generating sheaf} $\cE$ on $\X$. A generating sheaf on $\X$ is a $\pi$-very ample locally free sheaf on $\X$. This notion was introduced by M.~Olsson and J.~Starr \cite{OS} (see also \cite[Sect.~2]{Nir} for a discussion). In \cite{Nir}, Nironi constructs the moduli space $M$ of Gieseker stable (w.r.t.~$\O_X(1)$ and $\cE$) sheaves on $\X$ with fixed \emph{modified} Hilbert polynomial. Here the modified Hilbert polynomial of a coherent sheaf $\F$ on $\X$ is the polynomial
\[
P_{\O_X(1),\cE}(\F,t) = \chi(\X, \pi^* \O_X(t) \otimes \cE^* \otimes \F).
\]
The modified Hilbert polynomial is used to define Gieseker stability in the usual way. The reason $\cE$ is used is because otherwise $$\chi(\X,\pi^*\O_X(t) \otimes \F) = \chi(X,\O_X(t) \otimes \pi_*\F)$$ only ``sees'' the coarse moduli scheme $X$. The moduli space $M$ is in fact constructed using GIT and it is a quasi-projective $\C$-scheme \cite[Sect.~6]{Nir}. In the case there are no strictly Gieseker semistable sheaves $M$ is projective.

Let $\PP := \PP(a,b,c)$ as before, let $\O_\mathbf{P}(1)$ be any polarization on the coarse moduli scheme $\mathbf{P}$ and let $\cE$ be any generating sheaf on $\PP$. For any class $\rc \in K_0(\PP)_\Q$ of a torsion free sheaf on $\PP$, define $M_\cE(\rc)$ to be the moduli scheme of $\mu$-stable sheaves on $\PP$ with class $c$. Note that there are two differences with Nironi \cite{Nir}. Firstly, we fix $K$-group class (Section \ref{sec:K}) instead of modified Hilbert polynomial. This is a more refined topological invariant (see also the discussion in the introduction of \cite{Nir}). Secondly, we use $\mu$- instead of Gieseker stability. Here we mean $\mu$-stability defined using the modified Hilbert polynomial, i.e.~the linear term of the modified Hilbert polynomial divided by the quadratic term of the modified Hilbert polynomial. The moduli scheme $M_\cE(\rc)$ does not depend on the choice of $\O_\mathbf{P}(1)$ but does (in general) depend on the choice of generating sheaf $\cE$. (For polarizations on $\mathbf{P}$ and a general survey on weighted projective spaces viewed as toric varieties, see \cite{RT}.)

Note that we consider moduli spaces of $\mu$-stable sheaves \emph{only} and we do not consider strictly $\mu$-semistable sheaves. For $K$-group classes $\rc$ for which the associated modified Hilbert polynomial satisfies certain numerical constraints on the coefficients, one can ensure there are no strictly $\mu$-semistables around. E.g.~see examples 1--4 at the end of Section 7.2. \\

\noindent \textbf{Convention.} Throughout this section, we fix the standard polarization $\O_\mathbf{P}(1)$ on $\mathbf{P}$ and we choose a generating sheaf $\cE := \bigoplus_{n=0}^{E-1} \O_\PP(-n)$, where $E$ is any positive integer such that the least common multiple $m$ of $a,b,c$ divides $E$. Note that this $\cE$ admits equivariant structures. This is not the most general choice of generating sheaf on $\PP$, but we are getting the nicest formulae in this case. Most of the time, we suppress the dependence on  $\O_\mathbf{P}(1)$ and $\cE$.

\subsection{Rank 1}

In this section, $\rc$ denotes the class of a rank 1 torsion free sheaf on $\PP$ and we denote the corresponding moduli space by $M(\rc)$. In the rank 1 case $M(\rc)$ is independent of the choice of polarization as well as generating sheaf because $\mu$-stability is automatic. We are interested in computing the topological Euler characteristics $e(M(\rc))$, where we let $\rc \in K_0(\PP)_\Q$ run over all classes of rank 1 torsion free sheaves on $\PP$ with fixed first Chern class $\beta \in A^1(\PP)$
$$\rG_\beta(q)=\sum_{c_1(\rc)=\beta}e(M(\rc))q^\rc.$$ Here $q$ is a formal variable keeping track of the class $\rc$ and $A^*(\PP)$ denotes the Chow ring of $\PP$ as defined by A.~Vistoli \cite{Vis}.

In practice, $q$ stands for the following variables. We introduce the formal variables $p_0,\dots,p_{a-1}$, $q_0,\dots,q_{b-1}$ and $r_0,\dots,r_{c-1}$ respectively keeping tracks of the classes  $$[\O_{P_1}],\dots, [\O_{P_1}]g^{a-1}, [\O_{P_2}],\dots, [\O_{P_2}]g^{b-1} \;\text{and}\; [\O_{P_3}],\dots, [\O_{P_3}]g^{c-1}$$ in $K_0(\PP)_\Q$ defined in Section \ref{sec:K}. Referring to (\ref{equ:relations}), we impose the following relations among these variables:
\begin{align} \label{equ:varrel} p_0 p_d \cdots p_{a-d}&=q_0 q_d \cdots q_{b-d}=r_0 r_d \cdots r_{c-d},\\ \nonumber
p_1 p_{d+1} \cdots p_{a-d+1}&=q_1 q_{d+1} \cdots q_{b-d+1}=r_1 r_{d+1} \cdots r_{c-d+1}, \\ \nonumber
&\dots \\ \nonumber
p_{d-1} p_{2d-1} \cdots p_{a-1}&=q_{d-1} q_{2d-1} \cdots q_{b-1}=r_{d-1} r_{2d-1} \cdots r_{c-1},\\ \nonumber
p_0 p_{d_{12}} \cdots p_{a-d_{12}}&=q_0 q_{d_{12}} \cdots q_{b-d_{12}}, \dots \\ \nonumber
p_0 p_{d_{13}} \cdots p_{a-d_{13}}&=r_0 r_{d_{13}} \cdots r_{c-d_{13}}, \dots\\\nonumber
q_0q_{d_{23}} \cdots q_{b-d_{23}}&=r_{0} r_{d_{23}} \cdots r_{c-d_{23}}, \dots \nonumber \end{align}

As a toric sheaf, $\O_{P_1}$ is described by stacky $S$-families $\hat{F}_1$, $\hat{F}_2$, $\hat{F}_3$, where $\hat{F}_2=\hat{F}_3=0$ and $\hat{F}_1$ consists of a single vector space $\C$ with trivial $\bmu_a$-action at $(0,0) \in X(\T) = \Z^2$ and zeros everywhere else. The toric sheaves $\O_{P_2}$, $\O_{P_3}$ have similar descriptions replacing $1$ by $2$ and $1$ by $3$ respectively.
Denote by $$[\O_{P_1} \otimes \mu], \ [\O_{P_2} \otimes \nu], \ [\O_{P_3} \otimes \xi]$$ the generators of the Grothendieck groups of the residue gerbes at the points $P_1$, $P_2$ and $P_3$, i.e.~$K(B\bmu_{\hat{i}})\cong \operatorname{Rep}(\bmu_{\hat{i}})$ for $i=1,2,3$. We use the same notation for the push-forwards of these classes to $K_0(\PP)_\Q$.
\begin{proposition} \label{point}
In $K_0(\PP)_\Q$ we have equalities
\begin{align*}
[\O_{P_1} \otimes \mu^{i}] &= (1-g^b)(1-g^c) g^i, \\
[\O_{P_2} \otimes \nu^{j}] &= (1-g^a)(1-g^c) g^j, \\
[\O_{P_3} \otimes \xi^{k}] &= (1-g^a)(1-g^b)g^k,
\end{align*}
where $g:=[\O_{\PP}(-1)]$.
\end{proposition}
\begin{proof}
We compute $[\O_{P_1} \otimes \mu^{i}]$ and leave the other cases to the reader. Recall the characterization of toric line bundles $L_{(u_1,u_2,u_3)}$, $u_1,u_2,u_3 \in \Z$ of Section 6. Note that $L_{(1,0,0)}$, $L_{(0,1,0)}$, $L_{(0,0,1)}$ generate the equivariant Picard group (Proposition \ref{eqpic}) and their $K$-group classes are $g = [\O_\PP(-1)]$. 
Using the description of toric line bundles and $\O_{P_1} \otimes \mu^i$ in terms of stacky $S$-families (Section 6), one readily constructs a $\T$-equivariant short exact sequence 
\[
0 \longrightarrow L_{(b,c,0)} \longrightarrow L_{(b,0,0)} \oplus L_{(0,c,0)} \longrightarrow L_{(0,0,0)} \longrightarrow \O_{P_1} \longrightarrow 0,
\]
where the first map is $v \mapsto (v,v)$ and the second map is $(v,w) \mapsto v-w$. Since $\O_{P_1} \otimes \mu^{i} \cong \O_{P_1 }\otimes L_{(0,0,i)}$ (Section 6), we obtain 
\[
[\O_{P_1} \otimes \mu^{i}] = L_{(0,0,i)} - L_{(b,0,i)} - L_{(0,c,i)} + L_{(b,c,i)} = g^i - g^{b+i} - g^{c+i} + g^{b+c+i}.
\]
\end{proof} 

Slightly more general than Proposition \ref{point}, one can consider a stacky $S$-family with $\hat{F}_2=\hat{F}_3=0$ and $\hat{F}_1$ of the following form: put a single vector space $\C$ with representation $\mu^\alpha$ on the lattice point $$(i/b+I)m_1 + (j/c+J)m_2 \in X(\T),$$
where we recall that on chart $\U_1$ we use ``basis'' $(m_1,m_2)=((b,0),(0,c))$ (Section 5.2). The corresponding toric sheaf is easily seen to be
\[
\O_{P_1} \otimes L_{(i+Ib,j+Jc,\alpha-i-Ib-j-Jc)},
\]
so its $K$-group class is $(1-g^b)(1-g^c)g^{\alpha}$. Similarly on the other charts $\U_2$, $\U_3$.

Let $\F$ be any rank 1 toric torsion free sheaf on $\PP$. The reflexive hull of $\F$ is a toric line bundle $L_{(u_1,u_2,u_3)}$. This is clear from the toric picture of Section 6. The cokernel
\[
0 \longrightarrow \F \longrightarrow L_{(u_1,u_2,u_3)} \longrightarrow \cQ \longrightarrow 0
\]
is a $0$-dimensional toric sheaf which can be described by three coloured (2-dimensional) partitions as follows. In each chart $\U_i$, the stacky $S$-family of $\cQ$ has only one non-zero component $\hat{Q}_i$ in its box decomposition. The set
\[
\{(\l_1,\l_2) \in \Z^2 \ | \ Q_i(\l_1,\l_2) \neq 0\}
\]
defines a partition. Suppose we are on $\U_1$ and write uniquely $$u_1 = i + Ib, \ u_2 = j + Jc, \ u_3 = k + Ka$$ as in equations \eqref{ui}. The partition associated to $\cQ|_{\U_1}$ is nonempty if and only if $Q_1(I,J) \neq 0$ in which case $Q_1(I,J)$ has $\bmu_a$-weight 
\[
u_1+u_2+u_3 \ \mathrm{mod} \ a.
\]
Since any non-zero element of $Q_1(I,J)$ generates the whole module $\cQ|_{\U_1}$, we know the $\bmu_a$-representation of any non-zero $Q_1(\l_1,\l_2)$, i.e.~
\[
u_1+u_2+u_3+ \l_1 b + \l_2 c \ \mathrm{mod} \ a.
\] 
This describes a $\Z_a$-coloured partition $\lambda_1(\F)$. Similarly, coloured partitions $\lambda_2(\F)$, $\lambda_3(\F)$ arise from the charts $\U_2$, $\U_3$. For $i=1,2,3$, we denote by $$\Pi_{(u_1,u_2,u_3)}(i)$$ the collection of all partitions with $\Z_{\i}$-colouring arising from some $\lambda_i(\F)$, where $\F$ is a rank 1 toric torsion free sheaf on $\PP$ with reflexive hull $L_{(u_1,u_2,u_3)}$. Depending on the choice of $a,b,c$, not all $\i$ colours might appear. \\

\noindent \textbf{Example 1}. Suppose $(a,b,c) = (1,1,2)$ and let $u_1,u_2,u_3 \in \Z$. The charts $\U_1$ and $\U_2$ give rise to the collections $\Pi_{(u_1,u_2,u_3)}(1)$, $\Pi_{(u_1,u_2,u_3)}(2)$ of (uncoloured) partitions.  Moreover, chart $\U_3$ gives $\Pi_{(u_1,u_2,u_3)}(3)$, i.e.~the collection of $\Z_2$-coloured partitions where the first block is $0$ if $u_1+u_2+u_3$ is even and $1$ if $u_1+u_2+u_3$ is odd:
\begin{displaymath}
\xy
(-20,25)*{(a,b,c)=(1,1,2)} ; (-20,15)*{u_1+u_2+u_3 \ \mathrm{even}} ; (50,25)*{(a,b,c)=(1,1,2)} ; (50,15)*{u_1+u_2+u_3 \ \mathrm{odd}} ; (0,0)*{} ; (30,0)*{} **\dir{} ; (0,5)*{} ; (30,5)*{} **\dir{.} ; (0,10)*{} ; (30,10)*{} **\dir{.} ; (0,15)*{} ; (30,15)*{} **\dir{.} ; (0,20)*{} ; (30,20)*{} **\dir{.} ; (0,25)*{} ; (30,25)*{} **\dir{.} ; (0,30)*{} ; (30,30)*{} **\dir{} ;                   (0,0)*{} ; (0,30)*{} **\dir{} ; (5,0)*{} ; (5,30)*{} **\dir{.} ; (10,0)*{} ; (10,30)*{} **\dir{.} ; (15,0)*{} ; (15,30)*{} **\dir{.} ; (20,0)*{} ; (20,30)*{} **\dir{.} ; (25,0)*{} ; (25,30)*{} **\dir{.} ; (30,0)*{} ; (30,30)*{} **\dir{} ; (5,5)*{} ; (25,5)*{} **\dir{-} ;  (25,5)*{} ; (25,10)*{} **\dir{-} ; (25,10)*{} ; (20,10)*{} **\dir{-} ; (20,10)*{} ; (20,15)*{} **\dir{-} ; (20,15)*{} ; (10,15)*{} **\dir{-} ; (10,15)*{} ; (10,25)*{} **\dir{-} ; (10,25)*{} ; (5,25)*{} **\dir{-} ; (5,25)*{} ; (5,5)*{} **\dir{-} ;                                    (5,5)*!<-7pt,-7pt>{0} ; (10,5)*!<-7pt,-7pt>{1} ; (15,5)*!<-7pt,-7pt>{0} ; (20,5)*!<-7pt,-7pt>{1} ; (5,10)*!<-7pt,-7pt>{1} ; (10,10)*!<-7pt,-7pt>{0} ; (15,10)*!<-7pt,-7pt>{1} ; (5,15)*!<-7pt,-7pt>{0} ; (5,20)*!<-7pt,-7pt>{1} ;                                                                      (70,0)*{} ; (100,0)*{} **\dir{} ; (70,5)*{} ; (100,5)*{} **\dir{.} ; (70,10)*{} ; (100,10)*{} **\dir{.} ; (70,15)*{} ; (100,15)*{} **\dir{.} ; (70,20)*{} ; (100,20)*{} **\dir{.} ; (70,25)*{} ; (100,25)*{} **\dir{.} ; (70,30)*{} ; (100,30)*{} **\dir{} ;                   (70,0)*{} ; (70,30)*{} **\dir{} ; (75,0)*{} ; (75,30)*{} **\dir{.} ; (80,0)*{} ; (80,30)*{} **\dir{.} ; (85,0)*{} ; (85,30)*{} **\dir{.} ; (90,0)*{} ; (90,30)*{} **\dir{.} ; (95,0)*{} ; (95,30)*{} **\dir{.} ; (100,0)*{} ; (100,30)*{} **\dir{} ; (75,5)*{} ; (95,5)*{} **\dir{-} ;  (95,5)*{} ; (95,10)*{} **\dir{-} ; (95,10)*{} ; (90,10)*{} **\dir{-} ; (90,10)*{} ; (90,15)*{} **\dir{-} ; (90,15)*{} ; (80,15)*{} **\dir{-} ; (80,15)*{} ; (80,25)*{} **\dir{-} ; (80,25)*{} ; (75,25)*{} **\dir{-} ; (75,25)*{} ; (75,5)*{} **\dir{-} ;                                    (75,5)*!<-7pt,-7pt>{1} ; (80,5)*!<-7pt,-7pt>{0} ; (85,5)*!<-7pt,-7pt>{1} ; (90,5)*!<-7pt,-7pt>{0} ; (75,10)*!<-7pt,-7pt>{0} ; (80,10)*!<-7pt,-7pt>{1} ; (85,10)*!<-7pt,-7pt>{0} ; (75,15)*!<-7pt,-7pt>{1} ; (75,20)*!<-7pt,-7pt>{0} ;
\endxy 
\end{displaymath}

\noindent \textbf{Example 2}. Suppose $(a,b,c)=(2,2,2)$ and let $u_1,u_2,u_3 \in \Z$. Then $$\Pi_{(u_1,u_2,u_3)}(1), \ \Pi_{(u_1,u_2,u_3)}(2), \ \Pi_{(u_1,u_2,u_3)}(3)$$ are collections of $\Z_2$-coloured partitions, but only one colour actually appears. This colour is 0 when $u_1+u_2+u_3$ is even and 1 when $u_1+u_2+u_3$ is odd
\begin{displaymath}
\xy
(-20,25)*{(a,b,c)=(2,2,2)} ; (-20,15)*{u_1+u_2+u_3 \ \mathrm{even}} ; (50,25)*{(a,b,c)=(2,2,2)} ; (50,15)*{u_1+u_2+u_3 \ \mathrm{odd}} ; (0,0)*{} ; (30,0)*{} **\dir{} ; (0,5)*{} ; (30,5)*{} **\dir{.} ; (0,10)*{} ; (30,10)*{} **\dir{.} ; (0,15)*{} ; (30,15)*{} **\dir{.} ; (0,20)*{} ; (30,20)*{} **\dir{.} ; (0,25)*{} ; (30,25)*{} **\dir{.} ; (0,30)*{} ; (30,30)*{} **\dir{} ;                   (0,0)*{} ; (0,30)*{} **\dir{} ; (5,0)*{} ; (5,30)*{} **\dir{.} ; (10,0)*{} ; (10,30)*{} **\dir{.} ; (15,0)*{} ; (15,30)*{} **\dir{.} ; (20,0)*{} ; (20,30)*{} **\dir{.} ; (25,0)*{} ; (25,30)*{} **\dir{.} ; (30,0)*{} ; (30,30)*{} **\dir{} ; (5,5)*{} ; (25,5)*{} **\dir{-} ;  (25,5)*{} ; (25,10)*{} **\dir{-} ; (25,10)*{} ; (20,10)*{} **\dir{-} ; (20,10)*{} ; (20,15)*{} **\dir{-} ; (20,15)*{} ; (10,15)*{} **\dir{-} ; (10,15)*{} ; (10,25)*{} **\dir{-} ; (10,25)*{} ; (5,25)*{} **\dir{-} ; (5,25)*{} ; (5,5)*{} **\dir{-} ;                                    (5,5)*!<-7pt,-7pt>{0} ; (10,5)*!<-7pt,-7pt>{0} ; (15,5)*!<-7pt,-7pt>{0} ; (20,5)*!<-7pt,-7pt>{0} ; (5,10)*!<-7pt,-7pt>{0} ; (10,10)*!<-7pt,-7pt>{0} ; (15,10)*!<-7pt,-7pt>{0} ; (5,15)*!<-7pt,-7pt>{0} ; (5,20)*!<-7pt,-7pt>{0} ;                                                                      (70,0)*{} ; (100,0)*{} **\dir{} ; (70,5)*{} ; (100,5)*{} **\dir{.} ; (70,10)*{} ; (100,10)*{} **\dir{.} ; (70,15)*{} ; (100,15)*{} **\dir{.} ; (70,20)*{} ; (100,20)*{} **\dir{.} ; (70,25)*{} ; (100,25)*{} **\dir{.} ; (70,30)*{} ; (100,30)*{} **\dir{} ;                   (70,0)*{} ; (70,30)*{} **\dir{} ; (75,0)*{} ; (75,30)*{} **\dir{.} ; (80,0)*{} ; (80,30)*{} **\dir{.} ; (85,0)*{} ; (85,30)*{} **\dir{.} ; (90,0)*{} ; (90,30)*{} **\dir{.} ; (95,0)*{} ; (95,30)*{} **\dir{.} ; (100,0)*{} ; (100,30)*{} **\dir{} ; (75,5)*{} ; (95,5)*{} **\dir{-} ;  (95,5)*{} ; (95,10)*{} **\dir{-} ; (95,10)*{} ; (90,10)*{} **\dir{-} ; (90,10)*{} ; (90,15)*{} **\dir{-} ; (90,15)*{} ; (80,15)*{} **\dir{-} ; (80,15)*{} ; (80,25)*{} **\dir{-} ; (80,25)*{} ; (75,25)*{} **\dir{-} ; (75,25)*{} ; (75,5)*{} **\dir{-} ;                                    (75,5)*!<-7pt,-7pt>{1} ; (80,5)*!<-7pt,-7pt>{1} ; (85,5)*!<-7pt,-7pt>{1} ; (90,5)*!<-7pt,-7pt>{1} ; (75,10)*!<-7pt,-7pt>{1} ; (80,10)*!<-7pt,-7pt>{1} ; (85,10)*!<-7pt,-7pt>{1} ; (75,15)*!<-7pt,-7pt>{1} ; (75,20)*!<-7pt,-7pt>{1} ;
\endxy 
\end{displaymath}

Back to the general situation. For $i=1,2,3$, the partitions of $\Pi_{(u_1,u_2,u_3)}(i)$ are coloured by $\Z_{\i}$, though not all colours actually have to appear (e.g.~Example 2). For any $l \in \Z_{\i}$ and $\lambda \in \Pi_{(A,B,C)}(i)$, we denote by $\#_l \lambda$ the number of boxes of $\lambda$ with colour $l$. The previous discussion together with Propositions \ref{eqpic}, \ref{point} gives:
\begin{proposition} \label{ideal}
Let $\F$ be a rank 1 toric torsion free sheaf on $\PP$. Let $L_{(u_1,u_2,u_3)}$ be the reflexive hull of $\F$ and let $\cQ$ be the cokernel of the inclusion $\F \subset L_{(u_1,u_2,u_3)}$. Then $\cQ$ is described by three coloured partitions $\lambda_i(\F) \in \Pi_{(u_1,u_2,u_3)}(i)$, $i=1,2,3$ and the $K$-group class of $\F$ is 
\begin{align*}
\Bigg(1 - \sum_{i=1}^3\sum_{l=0}^{\i-1} \#_l \lambda_i(\F) \cdot g^l(1-g^a)(1-g^b)(1-g^c)/(1-g^{\i})\Bigg)g^{u_1+u_2+u_3},
\end{align*}
where $g:=[\O_{\PP}(-1)]$.
\end{proposition}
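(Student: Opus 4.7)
The plan is to exploit the short exact sequence of toric sheaves
\[
0 \longrightarrow \F \longrightarrow L_{(A,B,C)} \longrightarrow \cQ \longrightarrow 0
\]
coming from the inclusion of $\F$ into its reflexive hull. Passing to $K$-classes gives $[\F] = [L_{(A,B,C)}] - [\cQ]$ in $K(\PP)_\Q$, and by Proposition \ref{eqpic} combined with the fact that each of the three generating toric line bundles has underlying $K$-class $g = [\O_\PP(-1)]$, we immediately get $[L_{(A,B,C)}] = g^{A+B+C}$. So the work is in computing $[\cQ]$.

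First I would make $\cQ$ concrete via the toric picture of Section 6. The stacky $S$-family of $\F$ on each chart $\U_i$ agrees with that of $L_{(A,B,C)}|_{\U_i}$ except for a partition-shaped set of lattice points removed from the corner, so the stacky $S$-family of $\cQ|_{\U_i}$ is supported on these finitely many boxes. Each such box carries one copy of $\C$, whose $\bmu_{\i}$-weight is inherited from $L_{(A,B,C)}|_{\U_i}$. This is precisely the data of the colored 2D partition $\lambda_i(\F) \in \Pi_{(A,B,C)}(i)$, and in particular shows that the three partitions of the proposition are well-defined.

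Since $\cQ$ is $0$-dimensional and its toric support is contained in the three disjoint $\T$-fixed points $\{P_1,P_2,P_3\}$, it splits canonically as $\cQ = \cQ_1 \oplus \cQ_2 \oplus \cQ_3$ with $\cQ_i$ supported at $P_i$, giving $[\cQ] = \sum_i [\cQ_i]$. I would compute each $[\cQ_i]$ by summing over boxes of $\lambda_i(\F)$ using the refinement of Proposition \ref{point} noted immediately after its proof: a single $\C$ with $\bmu_a$-representation $\mu^{\alpha}$ placed at any lattice point of $\U_1$ has $K$-class $(1-g^b)(1-g^c) g^{\alpha}$ in $K(\PP)_\Q$, with analogous formulas on $\U_2$ and $\U_3$. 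Collecting $g$-exponents modulo $\i$ via the relation $g^{\i}[\O_{P_i}] = [\O_{P_i}]$ (itself a trivial consequence of Proposition \ref{point}) and grouping terms by their color $l$ gives
\[
[\cQ_i] \;=\; g^{A+B+C}\sum_{l=0}^{\i-1} \#_l\lambda_i(\F)\, g^l\, [\O_{P_i}],
\]
and substituting $[\O_{P_i}] = (1-g^a)(1-g^b)(1-g^c)/(1-g^{\i})$ and combining with $[L_{(A,B,C)}] = g^{A+B+C}$ recovers the claimed expression.

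The main obstacle will be the bookkeeping of weights and colors. The $\bmu_a$-weight of a box at relative position $(\l_1,\l_2)$ in $\U_1$ is $A+B+C+\l_1 b + \l_2 c \mod a$, which depends on the particular integer representatives $(I,J,K)$ chosen to encode $(A,B,C)$ in Section 6. One must verify that the ambiguity this introduces in the $g$-exponents is absorbed by the relation $g^{\i}[\O_{P_i}] = [\O_{P_i}]$, so that the pairing of box counts $\#_l$ with $g^l$ is well-defined in $K(\PP)_\Q$. Once this is checked, the rest is a direct computation, cycling the roles of the three charts.
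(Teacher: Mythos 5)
Your proposal is correct and follows essentially the same route as the paper: the paper states this proposition as an immediate consequence of the preceding discussion (the exact sequence $0 \to \F \to L_{(A,B,C)} \to \cQ \to 0$, the identification of $\cQ$ with three colored partitions supported at $P_1,P_2,P_3$, and the single-box $K$-class computation in the remark following Proposition \ref{point}), which is precisely what you make explicit. Your closing remark about checking that the relation $g^{\i}[\O_{P_i}]=[\O_{P_i}]$ absorbs the dependence on the representatives $(I,J,K)$ is exactly the right point to be careful about, and is the only bookkeeping the paper leaves implicit.
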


Suppose $\F$ is a rank 1 toric torsion free sheaf on $\PP$ with reflexive hull $L_{(u_1,u_2,u_3)}$. When counting $\T$-fixed points of moduli spaces $M(\rc)$ below, we are only interested in the coherent sheaf $\F$ and not its equivariant structure. A toric line bundle $L_{(u_{1}',u_{2}',u_{3}')}$ has trivial underlying line bundle if and only if $$u_{1}'+u_{2}'+u_{3}' = 0.$$ By tensoring $\F$ with any $L_{(-u_1,-u_2,u_1+u_2)}$, we change its equivariant structure, but not the underlying coherent sheaf. Therefore, it suffices to only consider rank 1 toric torsion free sheaves $\F$ on $\PP$ with reflexive hulls of the form $L_{(0,0,u)}$. In this case $c_1(\F) = -u$. For any $\beta \in A^1(\PP) \cong \Z$ and $i=1,2,3$, we define $$\Pi_\beta(i) := \Pi_{(0,0,-\beta)}(i).$$ 
\begin{theorem} \label{thm:generating function}
For any $\beta \in A^1(\PP)$, we have
\[
\rG_\beta = \left( \sum_{\lambda \in \Pi_\beta(1)} \prod_{l=0}^{a-1} p_{l}^{\#_l \lambda} \right) \left( \sum_{\lambda \in \Pi_\beta(2)} \prod_{l=0}^{b-1} q_{l}^{\#_l \lambda} \right) \left( \sum_{\lambda \in \Pi_\beta(3)} \prod_{l=0}^{c-1} r_{l}^{\#_l \lambda} \right),
\]
where the variable $p_l, q_l, r_l$ satisfy relations (\ref{equ:varrel}).
\end{theorem}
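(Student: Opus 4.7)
The plan is to compute $\rG_\beta$ by $\T$-equivariant localization of topological Euler characteristics. Since $\T=\C^{*2}$ is connected, for every $\rc$ we have $e(M(\rc))=e(M(\rc)^\T)$, so I reduce to enumerating $\T$-fixed points in $M(\rc)$ for all $\rc$ with $c_1(\rc)=\beta$ and recording for each fixed point its monomial $q^\rc$.

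First I would identify the $\T$-fixed locus with toric torsion free sheaves of a normalized form. A rank $1$ torsion free sheaf $\F$ on $\PP$ is simple, and it is $\T$-fixed precisely when it admits a $\T$-equivariant lift. Any two such lifts differ by twisting by a character of $\T$, which in the toric picture of Section~6 corresponds to tensoring by one of the line bundles $L_{(A,B,C)}$ with $A+B+C=0$ (these are exactly the toric line bundles whose underlying coherent sheaf is trivial, using Proposition~\ref{eqpic} and the fact that $\Pic(\PP)\cong\Z$ is the quotient of $\Pic^\T(\PP)\cong\Z^3$ by this sublattice). Since $c_1(\F)=\beta$ fixes the underlying line bundle of the reflexive hull as $\O_\PP(-\beta)$, there is a canonical equivariant lift for which the toric reflexive hull is $L_{(0,0,-\beta)}$. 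This gives a bijection between $\T$-fixed points in $\bigsqcup_{c_1(\rc)=\beta}M(\rc)$ and toric torsion free rank $1$ sheaves with reflexive hull $L_{(0,0,-\beta)}$.

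Next I would invoke Proposition~\ref{ideal} with $(A,B,C)=(0,0,-\beta)$. Such a sheaf $\F$ is uniquely described by a triple of colored partitions $(\lambda_1,\lambda_2,\lambda_3)\in\Pi_\beta(1)\times\Pi_\beta(2)\times\Pi_\beta(3)$, and its $K$-class equals
\[
[\F]\;=\;\Bigl(1-\sum_{i=1}^3\sum_{l=0}^{\i-1}\#_l\lambda_i\,[\O_{P_i}]\,g^l\Bigr)g^{-\beta}.
\]
Under the identification $[\O_{P_1}]g^l\leftrightarrow p_l$, $[\O_{P_2}]g^l\leftrightarrow q_l$, $[\O_{P_3}]g^l\leftrightarrow r_l$, each colored box of $\lambda_i$ contributes independently a factor $p_l$ (resp.~$q_l$, $r_l$) according to its color. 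The relations (\ref{equ:varrel}) among the formal variables are precisely the $K(\PP)_\Q$-relations derived in Section~\ref{sec:K}, so $q^\rc$ is well-defined up to the common overall factor $g^{-\beta}$ arising from the reflexive hull and absorbed into the notation. Since the three colored partitions vary independently, the sum of the monomials over all $\T$-fixed points factorizes as the product of three independent sums over $\Pi_\beta(1)$, $\Pi_\beta(2)$, $\Pi_\beta(3)$, yielding the stated formula.

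The main obstacle is the bijection in step one: showing that every $\T$-invariant point of $M(\rc)$ admits a unique toric structure once the reflexive hull is normalized. The existence of an equivariant lift for simple $\T$-invariant sheaves is standard (the lifting obstruction for a simple sheaf on a stack with a connected torus action is trivial), and uniqueness of the normalization uses the explicit computation of $\Pic^\T(\PP)$ in Proposition~\ref{eqpic} together with the identification of the character twist ambiguity with the kernel $\{A+B+C=0\}$. Once this is in hand, the remaining steps are bookkeeping with the $K$-class formula of Proposition~\ref{ideal}.
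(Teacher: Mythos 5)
Your proposal is correct and follows essentially the same route as the paper: localization to the $\T$-fixed locus, the fact that a simple $\T$-invariant sheaf admits an equivariant structure unique up to twisting by a character (the paper cites \cite[Prop.~4.4, 4.5]{Koo1} for this), normalization of the toric reflexive hull to $L_{(0,0,-\beta)}$, and then Proposition \ref{ideal} to translate fixed points into independent triples of colored partitions. The only cosmetic difference is that you spell out the identification of the character-twist ambiguity with the sublattice $\{A+B+C=0\}$ via Proposition \ref{eqpic}, which the paper states more briefly.
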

\begin{proof}
Fix a class $\rc \in K_0(\PP)_\Q$ with $c_1(\rc) = \beta$. The moduli scheme $M(\rc)$ has a torus action, which at the level of closed points is given by\footnote{To construct the action of $\T$ on $M(\rc)$ as a morphism, one extends the argument of \cite[Prop.~4.2]{Koo1} to Nironi's setting \cite{Nir}.}
\[
t\cdot [\F] := [t^* \F], \ t \in \T, \ \F \in M(\rc).
\]
By localization $e(M(\rc)) = e(M(\rc)^\T)$. The closed points of $M(\rc)^\T$ are exactly the isomorphism classes of rank 1 torsion free sheaves with class $\rc$ satisfying $t^* \F \cong \F$ for all $t \in \T$. Since any such $\F$ is simple, it admits a $\T$-equivariant structure \cite[Prop.~4.4]{Koo1} and this $\T$-equivariant structure is unique up to tensoring by a character of $\T$ \cite[Prop.~4.5]{Koo1}. Consequently, $M(\rc)^\T$ is equal to the set\footnote{Note that this only describes a bijection of finite sets. We are actually dealing with an isomorphism between two finite collections of reduced points by \cite[Thm.~4.9]{Koo1} (extended to this setting).} of \emph{equivariant} isomorphism classes of rank 1 toric torsion free sheaves with class $\rc$, where two such classes $[\F], [\F']$ are identified whenever $\F$ and $\F'$ differ by a character, i.e.~whenever $\F \cong \F' \otimes L_{(u_1,u_2,u_3)}$ for some $u_1+u_2+u_3 = 0$.

Hence any element of $M(\rc)^\T$ is \emph{uniquely} represented by stacky $S$-families $\{\hat{F}_i\}_{i=1,2,3}$ of a rank 1 toric torsion free sheaf with reflexive hull $L_{(0,0,-\beta)}$. By Proposition \ref{ideal}, the elements of $M(\rc)^\T$ are in bijective correspondence with triples of partitions $(\lambda_1,\lambda_2,\lambda_3) \in \Pi_\beta(1) \times \Pi_\beta(2) \times \Pi_\beta(3)$ satisfying
\begin{equation*}
\Bigg(1 - \sum_{i=1}^3\sum_{l=0}^{\i-1} \#_l \lambda_i \cdot g^l(1-g^a)(1-g^b)(1-g^c)/(1-g^{\i})\Bigg)g^{-\beta} =\rc. \qedhere
\end{equation*}
\end{proof}

By Theorem \ref{thm:generating function}, the problem of computing Euler characteristics of moduli spaces of the rank 1 torsion free sheaves on $\PP$ is reduced to counting coloured partitions. Counting coloured partitions is a delicate topic. To the authors knowledge, no closed formula exists for the general case. When the action of the cyclic group $\bmu_k$ on $\C^2$ is \emph{balanced}, i.e.~is of the form $\mu \cdot (x,y)=(\mu x, \mu^{k-1}y)$, there is an elegant formula appearing in the physics literature \cite{DS}. The formula in this case is\footnote{Assuming all coloured partitions have colour 0 at the origin.}
\begin{align} \label{balanced refined formula}
&\sum_{\text{coloured partitions } \lambda}q_0^{\#_0 \lambda}\cdots q_{k-1}^{\#_{k-1} \lambda}=\\ \nonumber &
\frac{1}{\prod_{j > 0}(1-(q_0\cdots q_{k-1})^j)^k}\sum_{n_1,\dots, n_{k-1}\in \mathbb{Z}}(q_0\cdots q_{k-1})^{\sum_{i=0}^{k-2} n_i^2-n_in_{i+1}}\prod_{r=1}^{k-1}q_{k-r}^{r^2/2+n_1r-r/2}.
\end{align}

One can count coloured partitions keeping track of the number of boxes with colour 0 only by setting $q_0=q$ and $q_1=\cdots =q_{k-1}=1$. Then formula (\ref{balanced refined formula}) is related to the character formula of the affine Lie algebra $\widehat{su}(k)$ (Equation (4.5) in [DS]) 
\begin{equation} \label{balanced formula}
\sum_{\text{coloured partitions } \lambda}q^{\#_0 \lambda}=\frac{q^{k/24}}{\eta(q)}\chi^{\widehat{su}(k)}(0),
\end{equation}
where $\eta(q)$ is the Dedekind eta function. \\

\noindent \textbf{Example 1}. 
When $(a,b,c)=(1,1,1),$ $(1,1,2),$ and $(1,2,3)$ (or permutations thereof), we only have balanced actions on all three charts of $\PP$. From Theorem \ref{thm:generating function} and formulae (\ref{balanced refined formula}), (\ref{balanced formula}), we get closed expressions for the generating functions.

\begin{enumerate} [i)] 
\item Let $(a,b,c)=(1,1,1)$. In this case $p_0=q_0=r_0$ by (\ref{equ:varrel}) and
\[
\rG_0 = \frac{1}{\prod_{k>0} (1-r_{0}^{k})^3}.
\]

\item Let $(a,b,c)=(1,1,2)$. In this case we have $p_0=q_0=r_0r_1$ by (\ref{equ:varrel}) and  
\[
\rG_0 = \frac{1}{\prod_{k>0} (1-(r_{0}r_{1})^{k})^4} \sum_{k \in \Z} r_{0}^{k^2} r_{1}^{k^2+k}.
\]
Putting $r_1=1$ and $r_0=q$ in $\rG_0$, we get  
$$\rG_0(q)=\frac{q^{1/6}}{\eta(q)^4}\chi^{\widehat{su}(2)}(0) = \frac{q^{1/6}}{\eta(q)^4} \theta_3(q),$$
where $\theta_3(q)$ is a Jacobi theta function. Geometrically, the power of $q$ keeps track of the Euler characteristic $\chi(\F^{**}/\F)$ of our rank 1 torsion free sheaves $\F$, where $\F^{**}$ is the reflexive hull of $\F$. This can be seen by noting that $$\chi(\O_{P_1} \otimes \mu^i) = \delta_{0i}, \ \chi(\O_{P_2} \otimes \nu^j) = \delta_{0j}, \ \chi(\O_{P_3} \otimes \xi^k) = \delta_{0k},$$ where $\delta_{ij}$ is the Kronecker delta.

\item  Let $(a,b,c)=(1,2,3)$. In this case $p_0=q_0q_1=r_0r_1r_2$ by  (\ref{equ:varrel}) and 
\[
\rG_0 = \frac{1}{\prod_{k>0} (1-(r_{0}r_{1}r_{2})^{k})^6} \sum_{k \in \Z} (r_0r_1r_2)^{k^2} q_{1}^{k} \sum_{k,l \in \Z} r_{0}^{k^2-kl+l^2} r_{1}^{k^2+2k+1-kl+l^2} r_{2}^{k^2+k-kl+l^2}.
\]
Setting $q_1=r_1=r_2=1$ and $p_0=q_0=r_0=q$ in $\rG_0$ as in the previous case, one gets  
$$\rG_0(q)=\frac{q^{1/4}}{\eta(q)^6}\chi^{\widehat{su}(2)}(0)\chi^{\widehat{su}(3)}(0) = \frac{q^{1/4}}{\eta(q)^6} \theta_3(q)(\theta_3(q) \theta_3(q^3) + \theta_2(q) \theta_2(q^3)),$$
where $\theta_2(q)$, $\theta_3(q)$ are Jacobi theta functions. 
\end{enumerate}

\noindent \textbf{Example 2}. Let $(a,b,c)=(1,c,c)$ with $c \geq 2$. In this case there is also a nice formula. Relations (\ref{equ:varrel}) give $p_0=q_0\cdots q_{c-1}$ and $q_i=r_i$. The following formula can be proved easily 
\[
\rG_0 = \frac{1}{\prod_{k>0} (1-(r_0 \cdots r_{c-1})^{k})} \frac{1}{(\prod_{k>0}\prod_{i=0}^{c-2} (1-r_{0} \cdots r_i (r_{0}\cdots r_{c-1})^{k-1}))^2}.
\]

\noindent \textbf{Example 3}. For $\PP(1,1,3)$, we encounter a non-balanced $\bmu_3$-action in the chart $\U_3$. We do not know a closed formula for $\rG_0$ in this example. Relations (\ref{equ:varrel}) give $p_0=q_0=r_0r_1r_2$. The first few terms of the expansion are 
$$\rG_0=\frac{(1+r_0+2r_0r_1+2r_0r_1r_2+r_0r_1^2+2r_0^2r_1r_2+3r_0r_1^2r_3+O(5))}{\prod_{k>0} (1-(r_{0}r_{1}r_2)^{k})^2}.$$

\subsection{Rank 2}

In this section, $\rc \in K_0(\PP)_\Q$ denotes the class of a rank 2 torsion free sheaf on $\PP:=\PP(a,b,c)$. We fix $\O_\mathbf{P}(1)$ and $\cE := \bigoplus_{n=0}^{E-1} \O_\PP(-n)$ as in the beginning of this section. The moduli scheme of $\mu$-stable sheaves on $\PP$ with class $\rc$ is denoted by  $M_\cE(\rc)$ as before. Unlike the rank 1 case, not all rank 2 torsion free sheaves on $\PP$ are $\mu$-stable. We denote by $$N_\cE(\rc) \subset M_\cE(\rc)$$ the open subset of the $\mu$-stable locally free sheaves.

Let $I\PP$ be the inertia stack. Using our explicit calculations of Section \ref{sec:K} together with the results of \cite{BCS, BH}, one can see that the Chern character map 
\[
\tch : K_0(\PP)_\Q \longrightarrow A^*(I\PP)_{\bmu_\infty}
\]
is a ring isomorphism. Therefore, fixing a class in $\rc \in K_0(\PP)_\Q$ is equivalent to fixing an element $\tch \in A^*(I\PP)_{\bmu_\infty}$. Recall that the set $$F = D \sqcup \coprod_{i,j} D_{ij} \sqcup \coprod_{i} D_i$$ indexes the components of the inertia stack $I\PP$ (Section 4.2). For a fixed $f \in F$ corresponding to component $Z$, let $\tch_f$ denote the part of $\tch$ taking values in $A^*(Z)_{\bmu_\infty}$. For notational convenience, we define $$(\tch_f)^k := (\tch_f)_{\dim Z-k} \in A^{\dim Z - k}(Z)_{\bmu_\infty}$$ to be its dimension $k$ part. In this context, we refer to $k$ as the codegree. The reason for this strange notational convention is that we are dealing with Chern characters of sheaves on components of \emph{different dimension} of the inertia stack $I\PP$ so it is more natural to keep track of dimension than codimension. Fix  $\alpha \in A^0(I\PP)_{\bmu_{\infty}}$ and $\beta \in A^1(I\PP)_{\bmu_{\infty}}$ to be (parts of) the Chern character of a rank 2 torsion free sheaf on $\PP$. Define the generating functions 
$$\rH_{\alpha,\beta}(q):=\sum_{\tiny \begin{array}{c} \tch^2(\rc)=\alpha \\ \tch^1(\rc)=\beta \end{array}}e(M_\cE(\rc))q^\rc, \quad \quad \rH^{\vb}_{\alpha,\beta}(q):=\sum_{\tiny \begin{array}{c} \tch^2(\rc)=\alpha \\ \tch^1(\rc)=\beta\end{array}}e(N_\cE(\rc))q^\rc.$$ 
So in terms of $\tch$, these generating functions sum over all 0-dimensional (i.e.~codegree 0) parts $(\tch_f)^0$. Here $\vb$ stands for vector bundle.

Our goal is to determine the generating function for the Euler characteristics $M_\cE(\rc)$. By the following lemma, this can be achieved by determining the generating functions for the Euler characteristics of $N_\cE(\rc)$ and of the moduli spaces of rank 1 torsion free sheaves on $\U_1, \U_2, \U_3$. The latter generating functions, which we denote by $\rG_{\U_i}(q)$, are known by the result of Section 7.1. The lemma is a small variation on a result by L.~G\"ottsche and K.~Yoshioka \cite[Prop.~3.1]{Got}.

\begin{lemma} The following holds: $\rH_{\alpha,\beta}(q)=\rH^{\vb}_{\alpha,\beta}(q)\prod_{i=1}^3\rG_{\U_i}(q)^2$. 
\end{lemma}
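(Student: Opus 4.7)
The plan is to mimic the rank $1$ localization argument of Theorem \ref{thm:generating function} on both sides. By $\T$-localization we have $e(M_\cE(\rc))=e(M_\cE(\rc)^\T)$ and $e(N_\cE(\rc))=e(N_\cE(\rc)^\T)$. Exactly as in Theorem \ref{thm:generating function} (using that $\mu$-stable sheaves are simple and thus admit a $\T$-equivariant structure unique up to tensoring by a character), the closed points of $M_\cE(\rc)^\T$ are in bijection with equivariant isomorphism classes of $\mu$-stable toric rank $2$ torsion free sheaves of class $\rc$, modulo twisting by equivariant line bundles of trivial underlying class; analogously for $N_\cE(\rc)^\T$ with ``torsion free'' replaced by ``locally free''.

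Next, to any toric $\mu$-stable torsion free $\F$ on $\PP$ associate the short exact sequence $0\to\F\to\F^{**}\to\cQ\to 0$. Since $\PP$ is a smooth $2$-dimensional DM stack, $\F^{**}$ is a toric rank $2$ vector bundle and $\cQ$ is a toric $0$-dimensional sheaf; moreover $\F$ and $\F^{**}$ have the same slope, so saturation shows $\F$ is $\mu$-stable iff $\F^{**}$ is. The support of $\cQ$ is $\T$-invariant and zero-dimensional, hence is contained in the three $\T$-fixed points $P_1,P_2,P_3$, and $\cQ$ splits accordingly as $\cQ_1\oplus\cQ_2\oplus\cQ_3$. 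On each chart $\U_i\cong [\C^2/\bmu_{\i}]$, the description of rank $2$ toric vector bundles in Section 6 (together with the standard Klyachko-type argument that on an affine toric chart every toric vector bundle is a direct sum of toric line bundles) shows $\F^{**}|_{\U_i}\cong L_{i,1}\oplus L_{i,2}$ equivariantly. The inclusion $\F|_{\U_i}\hookrightarrow \F^{**}|_{\U_i}$ with $0$-dimensional cokernel is then forced to split as $\F_{i,1}\oplus\F_{i,2}$, where each $\F_{i,j}\subset L_{i,j}$ is a rank $1$ toric torsion free sheaf on $\U_i$ with reflexive hull $L_{i,j}$.

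This sets up a bijection between $M_\cE(\rc)^\T$ and tuples $(\F^{**},\{\F_{i,j}\}_{i=1,2,3,\,j=1,2})$ satisfying the obvious class-matching. In $K(\PP)_\Q$ the additivity of Chern character on the short exact sequence gives
\[
\tch(\F)=\tch(\F^{**})-\sum_{i=1}^3\sum_{j=1}^2\bigl(\tch(L_{i,j})-\tch(\F_{i,j})\bigr),
\]
where each correction is supported at $P_i$ and hence contributes only to the codegree $0$ part $(\tch_f)^0$; the codegree $\geq 1$ parts $\alpha,\beta$ of $\rc$ coincide with those of $\F^{**}$. Summing $e(M_\cE(\rc)^\T)q^\rc$ over all $\rc$ with fixed $(\alpha,\beta)$, the bijection factors the sum as a product of the analogous sum for $\F^{**}$ (which is $\rH^{vb}_{\alpha,\beta}(q)$) times, for each $i=1,2,3$, two independent copies of the rank $1$ generating function on $\U_i$, namely $\rG_{\U_i}(q)^2$. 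The announced identity follows.

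The main obstacle is verifying the local splitting step cleanly: one must check that every toric rank $2$ vector bundle on $\U_i=[\C^2/\bmu_{\i}]$ is equivariantly a direct sum of toric line bundles (so that the subsheaf $\F|_{\U_i}$ automatically splits) and that a toric $0$-dimensional subsheaf of $L_{i,1}\oplus L_{i,2}$ decomposes as a direct sum of its projections, both of which follow from the filtration description of Section 3 (Equation~(\ref{reflexive})) and the explicit stacky $S$-family bookkeeping of Section 6. The remaining bookkeeping --- matching the decomposition of $\tch(\cQ)$ with the individual rank $1$ contributions at each fixed point, and reconciling the quotient by ``trivial equivariant line bundles'' on both sides --- is straightforward given the rank $1$ setup already established in Theorem \ref{thm:generating function}.
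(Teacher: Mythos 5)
There is a genuine gap at the local splitting step. Your claim that a toric inclusion $\F|_{\U_i}\hookrightarrow L_{i,1}\oplus L_{i,2}$ with $0$-dimensional cokernel is ``forced to split'' as $\F_{i,1}\oplus\F_{i,2}$ with $\F_{i,j}\subset L_{i,j}$ is false. Already on $\PP^2$ (so $\bmu_{\i}$ trivial and no fine grading to help you), take $\F|_{\U_i}$ to be the kernel of the equivariant surjection $\O_{\U_i}^{\oplus 2}\to\O_{P_i}$, $(f,g)\mapsto f(P_i)-g(P_i)$: at the relevant weight the subspace of $\C\oplus\C$ is the diagonal, which is not a direct sum of subspaces of the two factors, so this toric subsheaf does not decompose. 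Varying the line in $\C^2$ gives a whole $\PP^1$ of $\T$-fixed points of $M_\cE(\rc)$, so the fixed locus is positive-dimensional and your claimed bijection between $M_\cE(\rc)^\T$ and tuples $(\F^{**},\{\F_{i,j}\})$ fails; the identity $e(M_\cE(\rc))=e(M_\cE(\rc)^\T)$ is still true, but you would then have to compute the Euler characteristic of these positive-dimensional components rather than count points. (Numerically nothing is lost --- each such $\PP^1$ contributes $e(\PP^1)=2$, matching the two split configurations --- but your argument as written does not prove this.)

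The paper's proof is organized precisely to avoid this issue, and differs from yours in two ways. First, it does not $\T$-localize $M_\cE(\rc)$ at all: it stratifies the entire moduli space (not just the fixed locus) by the reflexive hull, writing $M_\cE(\rc)$ at the level of closed points as a union of Quot schemes $\operatorname{Quot}_{\PP}(\V,\rc')$ over $[\V]\in N_\cE(\rc'')$, and uses multiplicativity of the topological Euler characteristic over this stratification. Second, after $\T$-localizing the Quot scheme to reduce to $\operatorname{Quot}_{\U_i}(\O_{\U_i}^{\oplus 2},\rc_i)$ on each chart, it introduces an \emph{auxiliary} $\C^{*2}$-action scaling the two summands of $\O_{\U_i}^{\oplus 2}$; only the fixed points of this second action are the split subsheaves, and this is what produces the factor $\rG_{\U_i}(q)^2$. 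If you want to salvage your fixed-point-first approach, you must insert this second localization (or an equivalent direct computation of the Euler characteristic of the families of non-split toric subsheaves) in place of the false splitting claim.
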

\begin{proof}\newcommand{\V}{\mathcal{V}}
\newcommand{\mQ}{\mathcal{Q}}
Embedding into the reflexive hull of a rank 2 torsion free sheaf $\F$ on $\PP$ gives the short exact sequence 
\begin{equation} \label{ses}
0\longrightarrow \F \longrightarrow \F^{**} \longrightarrow \mQ\to 0, 
\end{equation}
where $\mQ$ is a 0-dimensional sheaf on $\PP$. Since $\PP$ is a smooth DM stack of dimension two $\F^{**}$ is a rank 2 locally free sheaf. 

We denote by $\operatorname{Quot}_{\PP}(\V,\rc)$ the projective scheme of quotients of a locally free sheaf $\V$ on $\PP$ with class $\rc\in K_0(\PP)_\Q$. See \cite{OS} for the construction of these Quot schemes. 

Let $\rc \in K_0(\PP)_\Q$ be the class of a rank 2 torsion free sheaf on $\PP$. By short exact sequence \eqref{ses} $M_\cE(\rc)$ can be stratified (at the level of closed points) by $$\operatorname{Quot}_{\PP}(\V,\rc'), \ [\V] \in N_\cE(\rc'') \ \mathrm{with} \ \rc'' - \rc' = \rc.$$  Hence at the level of closed points, $M_\cE(\rc)$ is in bijective correspondence with the disjoint union
\[
\coprod_{\rc'' - \rc'  = \rc} \coprod_{[\V] \in N_\cE(\rc'')} \operatorname{Quot}_{\PP}(\V,\rc'). 
\]

Let $\V$ be a rank 2 $\T$-equivariant locally free sheaf on $\PP$. In order to determine $e(\operatorname{Quot}_{\PP}(\V,\rc'))$ for any 0-dimensional $\rc'$, we count $\T$-fixed points using the $\T$-action on $\PP$. This $\T$-action lifts to $\operatorname{Quot}_{\PP}(\V,\rc')$ (see proof of \cite[Prop.~4.1]{Koo1}). Since $\mQ$ is 0-dimensional with support at $P_1,P_2,P_3 \in \PP$, giving a $\T$-fixed quotient $$\V \longrightarrow \mQ \longrightarrow 0$$ is equivalent to giving quotients $$\O_{\U_i}^{\oplus 2} \longrightarrow \mQ |_{\U_i} \longrightarrow 0,$$ for all $i=1,2,3$. Therefore at the level of closed points, $\operatorname{Quot}_{\PP}^\T(\V,\rc')$ is in bijective correspondence with the disjoint union $$\coprod_{\rc_1+\rc_2+\rc_3=\rc'} \prod_{i=1}^3\operatorname{Quot}_{\U_i}^\T(\O_{\U_i}^{\oplus 2},\rc_i).$$

We are reduced to determine $e(\operatorname{Quot}_{\U_i}(\O_{\U_i}^{\oplus 2},\rc_i))$. 
For this we use the $\C^{*2}$-action on $\operatorname{Quot}_{\U_i}(\O_{\U_i}^{\oplus 2},\rc_i)$, which scales the factors of $\O_{\U_i}^{\oplus 2}$. At the level of closed points, the $\C^{*2}$-fixed locus of $\operatorname{Quot}_{\U_i}(\O_{\U_i}^{\oplus 2},\rc_i)$ is in bijective correspondence with the disjoint union $$\coprod_{\rc'_i+\rc''_i=\rc_i}\operatorname{Quot}_{\U_i}(\O_{\U_i},\rc'_i)\times \operatorname{Quot}_{\U_i}(\O_{\U_i},\rc''_i).$$ 
But  $\operatorname{Quot}_{\U_i}(\O_{\U_i},\rc''_i)$ is isomorphic to the moduli space of rank 1 torsion free sheaves on $\U_i$ with class $-\rc''_i$. This proves $$\sum_{\rc_i}e(\operatorname{Quot}_{\U_i}(\O_{\U_i}^{\oplus 2},\rc_i))q^{\rc_i}=\rG_{\U_i}(q)^2,$$ which finishes the proof of the lemma.
\end{proof}

The rest of the paper is devoted to the study of rank 2 toric locally free sheaves on $\PP$ and the generating function $\rH^{\vb}_{\alpha, \beta}(q)$. In Section 6, we classified three types of toric locally free sheaves on $\PP$: type I, II and III. Since types II and III are always decomposable, they are never $\mu$-stable. Hence we only need to consider type I rank 2 toric locally free sheaves $\F$ on $\PP$. 

\begin{definition}
The stacky $S$-families of a type I locally free sheaf $\F$ on $\PP$ are entirely determined by integers $u_1,u_2,u_3$ and $v_1,v_2,v_3 \geq 0$, 
and an element $(p_1,p_2,p_3) \in (\PP^1)^3$ (see Section 6). Recall that if $v_i=0$ then the corresponding $p_i$ does not occur. Our formulae can be more succinctly expressed if we use 
\begin{flalign*}
\qquad\qquad\qquad\qquad\qquad\qquad w_1 := bv_1, \ w_2:=cv_2, \ w_3:=av_3.  && \oslash 
\end{flalign*}  
\end{definition}

We want to express the $K$-group class of $[\F]$ in terms of $(u_1,u_2,u_3)$, $(w_1,w_2,w_3)$, $(p_1,p_2,p_3)$ only. 

\begin{proposition} \label{rank2}
Let $\F$ be a type I rank 2 toric vector locally free sheaf on $\PP$. Let its stacky $S$-families be described by $u_1,u_2,u_3 \in \Z$, $w_1,w_2,w_3 \in \Z_{\geq 0}$ satisfying $b \mid w_1$, $c \mid w_2$, $a \mid w_3$ and $(p_1,p_2,p_3) \in (\PP^1)^3$. Then the $K$-group class of $\F$ is
\begin{align*}
\Big(1+g^{w_1+w_2+w_3}&-(1-g^{w_1})(1-g^{w_2})(1-\delta_{p_1 p_2})-(1-g^{w_2})(1-g^{w_3})(1-\delta_{p_2 p_3}) \\
&-(1-g^{w_3})(1-g^{w_1})(1-\delta_{p_3 p_1})\Big) g^{u_1+u_2+u_3},
\end{align*}
where $g:=[\O_{\PP}(-1)]$ and 
the Kronecker delta $\delta_{pq}$ is 1 if $p=q$ and 0 if $p \neq q$.
\end{proposition}
\begin{proof}
We make use of the following observation. Let $\G$ be another toric torsion free sheaf with stacky $S$-families $\hat{G}_1$, $\hat{G}_2$, $\hat{G}_3$. Assume 
\begin{equation} \label{G}
\mathrm{dim}({}_{b} G_i(\l_1,\l_2)_l) = \mathrm{dim}({}_{b} F_i(\l_1,\l_2)_l),
\end{equation}
 for all $i=1,2,3$, $b \in \cB_\T$, $\l_1,\l_2 \in \Z$ and $l \in \Z_{\i}$. Then Lemma \ref{devissage} below shows $$[\F] = [\G] \in K_0(\PP)_\Q.$$ Assume without lost of generality $u_1=u_2=u_3=0$. The general case follows from tensoring by the line bundle $L_{(u_1,u_2,u_3)}$ introduced in Section 6. We construct a \emph{simpler} toric sheaf $\G$ satisfying Equation (\ref{G}). We take $\G$ to be an equivariant subsheaf of $L_{(0,0,0)} \oplus L_{(w_1,w_2,w_3)}$. In each chart, such a sheaf only has a non-zero box summand over $(0,0)$. For each $i=1,2,3$, define\footnote{Here $v_4:=v_1$, $w_4:=w_1$ and $p_4:=p_1$.}
\[
{}_{(0,0)}G_i(\l_1,\l_2) := {}_{(0,0)}L_{(0,0,0),i}(\l_1,\l_2) \oplus {}_{(0,0)}L_{(w_1,w_2,w_3),i}(\l_1,\l_2), \\
\]
in the following regions
\begin{align*}
&\{(\l_1,\l_2) \ | \ \l_1 \geq v_i \ \mathrm{or} \ \l_2 \geq v_{i+1}\}, \\ 
&\{(\l_1,\l_2) \ | \ \l_1 < v_i \ \mathrm{and} \ \l_2 < v_{i+1} \}, \ \mathrm{if} \ p_i = p_{i+1}
\end{align*}
and  ${}_{(0,0)}G_i(\l_1,\l_2)  = 0$ for all remaining $(\l_1,\l_2)$. From this description one can show that Equation (\ref{G}) is satisfied and see that $[\G]$ is equal to
\begin{align*}
1+g^{w_1+w_2+w_3}&- (1- \delta_{p_1 p_2})\sum_{I=0}^{w_1/b - 1} \sum_{J=0}^{w_2/c-1} [\O_{P_1} \otimes \mu^{Ib+Jc}] \\
&- (1- \delta_{p_2 p_3})\sum_{J=0}^{w_2/c-1} \sum_{K=0}^{w_3/a-1} [\O_{P_2} \otimes \nu^{Jc+Ka}] \\
&- (1- \delta_{p_3 p_1}) \sum_{K=0}^{w_3/a-1} \sum_{I=0}^{w_1/b-1} [\O_{P_3} \otimes \xi^{Ka+Ib}].
\end{align*}
The sums can be computed using Proposition \ref{point}. 
\end{proof}

\begin{lemma} \label{devissage}
Let $\F$, $\G$ be toric torsion free sheaves on $\PP$ and assume their stacky $S$-families only possibly differ in the maps between their fine-graded weight spaces. Then $[\F] = [\G] \in K_0(\PP)_\Q$.
\end{lemma}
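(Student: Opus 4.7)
The plan is to show that the $K$-class of a toric torsion-free sheaf on $\PP$ depends only on the fine-graded weight-space dimensions of its stacky $S$-families, via equivariant $K$-theoretic localization at the three $\T$-fixed points $P_1, P_2, P_3$.

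Concretely, Thomason's equivariant localization theorem applies to $\PP$ as a complete toric DM stack with isolated fixed points: after inverting the equivariant Euler classes of the tangent spaces at the $P_i$, the natural restriction map
\[
K^\T(\PP)_\Q \otimes_{R(\T)} \operatorname{Frac} R(\T) \ \hookrightarrow \ \bigoplus_{i=1}^3 R(\T \times \bmu_{\hat{i}})_\Q \otimes_{R(\T)} \operatorname{Frac} R(\T)
\]
becomes injective. The image of $[\F]^\T$ at the factor corresponding to $P_i$ is the local character of the formal stalk of $\F$ at $P_i$, which for a toric sheaf is
\[
\sum_{b \in \cB_\T} \sum_{\vec{\l}, n} \dim\,{}_b F_i(\vec{\l})_n \cdot \chi\!\left(b+\sum_j \l_j m_j\right)\chi(n),
\]
interpreted in $\operatorname{Frac} R(\T)$ and divided by the tangent-space Euler class at $P_i$. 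This expression depends only on the fine-graded dimensions, so it agrees for $\F$ and $\G$ by hypothesis. Injectivity of the localization map forces $[\F]^\T = [\G]^\T$ in $K^\T(\PP)_\Q$, and the forgetful map $K^\T(\PP)_\Q \to K(\PP)_\Q$ then gives $[\F] = [\G]$.

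The main obstacle is the technical justification of Thomason's theorem in the DM-stack setting and the formal-series interpretation of the local-character generating series (since the formal stalks are infinite-dimensional, the characters must be interpreted as elements of a suitable Laurent completion of $R(\T)$). As a more hands-on alternative, one can proceed by equivariant d\'evissage: build a finite equivariant filtration of $\F$ and $\G$ (after tensoring by a sufficiently positive equivariant line bundle to guarantee exhaustion in finitely many steps) whose successive graded pieces are either toric line bundles (with $K$-class given by Proposition \ref{eqpic}) or $0$-dimensional toric sheaves supported at the $P_i$ (with $K$-class given by Proposition \ref{point} together with the observation immediately following it). In both cases the graded-piece $K$-class depends only on the fine-graded dimensions, so the alternating sums agree and $[\F] = [\G]$.
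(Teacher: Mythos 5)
Your ``hands-on alternative'' is in fact the route the paper takes---its proof is labelled equivariant d\'evissage---but the filtration you propose does not work as stated, and the step you skip is exactly the content of the lemma. You want a filtration of $\F$ whose graded pieces are toric line bundles or $0$-dimensional sheaves; for rank $\geq 2$ the natural way to produce such pieces is to peel off a saturated rank $1$ toric subsheaf, but which rank $1$ toric subsheaves exist, and what their multifiltrations (hence their classes) are, depends on the \emph{maps} in the $S$-family and not only on the dimensions: for a type I rank $2$ bundle the sub-line-bundles are the ones generated by the lines $p_1,p_2,p_3 \subset \C^2$, and the quotient by one of them changes according to whether $p_i = p_j$. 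So you cannot assert that the graded pieces of your filtrations of $\F$ and $\G$ match without already knowing the conclusion (and indeed Proposition \ref{rank2}, whose proof \emph{uses} this lemma, would be circular here). The paper's d\'evissage avoids this by filtering by \emph{lattice regions} rather than by subsheaves of lower rank: working on a chart, one first peels off the quadrant $\{\l_1 \geq A,\ \l_2 \geq B\}$ lying above all homogeneous generators, where the module is free and the peeled piece is $\O^{\oplus r}$; then the columns $\{\l_1 = A-1,\ \l_2 \geq B\}$, etc., each contributing $\O_{\C}^{\oplus s}$ supported on a toric divisor; and finally individual lattice points, each a skyscraper. Every such region is an up-set, hence defines a subfamily no matter what the maps are, and the isomorphism class of each peeled piece is read off from the weight-space dimensions alone. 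Note in particular that $1$-dimensional pieces supported on the toric divisors are unavoidable, so ``line bundles or $0$-dimensional sheaves'' is not the right list of graded pieces.

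Your primary (localization) branch is a genuinely different and in principle viable route: $\iota_i^*[\F]$ is the alternating sum of Tor's against $\O_{P_i}$, which via a graded free resolution is the regularized Poincar\'e series $\sum \dim {}_bF_i(\vec{\l})_n\,\chi(\cdot)$ multiplied by the K-theoretic Euler class $\prod_j(1-\chi(m_j)^{-1})$, and this manifestly depends only on the fine-graded dimensions. But, as you concede, the two inputs that carry all the weight are left unproved: a concentration theorem for $K^\T$ of the toric DM stack $\PP$ giving injectivity of restriction to $P_1 \sqcup P_2 \sqcup P_3$ after inverting the tangent weights (plus freeness of $K^\T(\PP)_\Q$ over $R(\T)_\Q$ so that nothing is lost before localizing), and the identification of $\iota_i^*[\F]$ with the regularized character in a completed representation ring. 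Until those are supplied---or replaced by the elementary region-by-region d\'evissage above---the proof is not complete.
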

\begin{proof}
We give the argument for toric torsion free sheaves on $\C^2$ with standard $\T$-action. The reader can easily generalize the argument to $\PP$. Take $(1,0)$, $(0,1)$ as a basis for the character group $M = X(\T)$. Let $\F$, $\G$ be toric torsion free sheaves on $\C^2$ and denote their $S$-families by $\hat{F}$, $\hat{G}$. Assume $\F$, $\G$ only possibly differ in the maps between their weight spaces. We apply \emph{equivariant d\'evissage}. Since both sheaves are finitely generated, there are integers $A,B$ such that all homogeneous generators of $H^0(\F)$ and $H^0(\G)$ lie in $(-\infty,u_1) \times (-\infty,u_2)$. In particular, for all $\l_1 \geq u_1$ and $\l_2 \geq u_2$ we have $F_1(\l_1,\l_2) = G_1(\l_1,\l_2) = \C^2$ and the maps between the weight spaces are the identity. Define the $S$-family
\begin{align*}
&S_1(\l_1,\l_2) := F(\l_1,\l_2), \ \mathrm{for} \ \l_1 \geq u_1 \ \mathrm{and} \ \l_2 \geq u_2, \\
&S_1(\l_1,\l_2) := 0, \ \mathrm{otherwise}.
\end{align*}
At the level of $S$-families, $S_1 \subset F$ and we denote the quotient by $Q_1$. This corresponds to a toric subsheaf $\S_1 \subset \F$ with quotient sheaf $\cQ_1$ and we get $$[\F] = [\S_1]+[\cQ_1].$$ Repeating the same argument for $\G$, we obtain $$[\G] = [\tilde{\S}_1]+[\tilde{\cQ}_1] = [\S_1]+[\tilde{\cQ}_1],$$ since $\tilde{\S}_1 = \S_1$. (Aside: $\S_1 \cong \O_{\C^2}^{\oplus r}$, where $r$ is the rank.)

We shift focus to $\cQ_1$, $\tilde{\cQ}_1$. Define the $S$-family
\begin{align*}
&S_2(\l_1,\l_2) := F(\l_1,\l_2), \ \mathrm{for} \ \l_1= u_1-1 \ \mathrm{and} \ \l_2 \geq u_2, \\
&S_2(\l_1,\l_2) := 0, \ \mathrm{otherwise}.
\end{align*}
This gives a toric sheaf $\S_2 \subset \cQ_1$ with quotient $\cQ_2$ and $$[\cQ_1] = [\S_2]+[\cQ_2].$$ Repeating the same argument for $\tilde{\cQ}_1$, we obtain $$[\tilde{\cQ}_1] = [\tilde{\S}_2]+[\tilde{\cQ}_2] = [\S_2]+[\tilde{\cQ}_2],$$ since $\tilde{\S}_2 = \S_2$. (Aside: $\S_2 \cong \O_{\C}^{\oplus s}$ for some $s$.) Repeating the argument a finite number of times, we ``peel off'' the region where $\l_1 \geq u_1$ or $\l_2 \geq u_2$. What is left are toric sheaves $\cQ_n$, $\tilde{\cQ}_n$ defined by the $S$-families
\begin{align*}
&Q_n(\l_1,\l_2) := F(\l_1, \l_2), \ \tilde{Q}_n(\l_1,\l_2) := G(\l_1, \l_2),  \ \mathrm{for} \ \l_1 \leq u_1-1 \ \mathrm{and} \ \l_2 \leq u_2-1, \\
&Q_n(\l_1,\l_2) := 0, \ \tilde{Q}_n(\l_1,\l_2) := 0, \ \mathrm{otherwise}.
\end{align*}
It suffices to show $[\cQ_n] = [\tilde{\cQ}_n]$.

For this, define the $S$-family
\begin{align*}
&S_{n+1}(\l_1,\l_2) := F(\l_1,\l_2), \ \mathrm{for} \ (\l_1,\l_2) = (u_1-1,u_2-1), \\
&S_{n+1}(\l_1,\l_2) := 0, \ \mathrm{otherwise}.
\end{align*}
This gives a toric sheaf $\S_{n+1} \subset \cQ_n$ with quotient $\cQ_{n+1}$ and $$[\cQ_n] = [\S_{n+1}]+[\cQ_{n+1}].$$ Repeating the same argument for $\tilde{\cQ}_{n+1}$, we obtain $$[\tilde{\cQ}_n] = [\tilde{\S}_{n+1}]+[\tilde{\cQ}_{n+1}] = [\S_{n+1}]+[\tilde{\cQ}_{n+1}],$$ since $\tilde{\S}_{n+1} = \S_{n+1}$. (Aside: $\S_{n+1} \cong \O_0^{\oplus t}$ for some $t$.) Repeating the final part a finite number of times, the lemma is proved. 
\end{proof}

We can now classify all rank 2 $\mu$-stable toric locally free sheaves on $\PP$. Let $P_\cE(\cdot,t)$ be the modified Hilbert polynomial with respect to $\O_\mathbf{P}(1)$ and $\cE$ as introduced at the beginning of this section. Note that $\O_\mathbf{P}(1)$ pulls back to $\O_\PP(m)$ on $\PP$. (Recall the definition of the common divisors $d$, $d_{ij}$ and the common multiple $m$ from Section 4.2.) The modified slope $\mu$ of any torsion free sheaf on $\PP$ is defined to be the ratio of the linear and quadratic terms of its modified Hilbert polynomial. 

\begin{proposition} \label{rank2classify} 
Let $\F$ be a type I rank 2 toric locally free sheaf on $\PP$ and let its stacky $S$-families be described by $u_1,u_2,u_3 \in \Z$, $w_1,w_2,w_3 \in \Z_{\geq 0}$ satisfying $b \mid w_1$, $c \mid w_2$, $a \mid w_3$ and $(p_1,p_2,p_3) \in (\PP^1)^3$. Then $\F$ is $\mu$-stable if and only if $w_1, w_2, w_3 > 0$, $p_1, p_2, p_3$ are mutually distinct and
$$w_1 < w_2+w_3, \ w_2 < w_1+w_3, \ w_3 < w_1+w_2.$$
\end{proposition}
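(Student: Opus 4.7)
The plan is a slope-comparison argument combined with a case analysis on the coincidences among $p_1, p_2, p_3$. First I would reduce to equivariant destabilizers: since $\F$ is $\T$-equivariant, any maximal $\mu$-destabilizing subsheaf is unique, hence inherits the $\T$-action, and its saturation is an equivariant rank $1$ subbundle. By Proposition~\ref{eqpic}, such a subbundle is a toric line bundle $L_{(B_1,B_2,B_3)}$. Thus $\mu$-stability of $\F$ is equivalent to $\mu_\cE(L)<\mu_\cE(\F)$ for every equivariant embedding $L_{(B_1,B_2,B_3)}\hookrightarrow\F$.

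Next I would compare slopes. Set $A:=A_1+A_2+A_3$, $\Delta:=\Delta_1+\Delta_2+\Delta_3$, $B:=B_1+B_2+B_3$. By Proposition~\ref{rank2}, $[\F]=g^A+g^{A+\Delta}$ modulo a $0$-dimensional class that contributes only to the constant term of $P_\cE$. The divisibilities $b\mid\Delta_1,\ c\mid\Delta_2,\ a\mid\Delta_3$ give $d\mid\Delta$, and any equivariant embedding forces $b\mid B_1-A_1$, $c\mid B_2-A_2$, $a\mid B_3-A_3$, hence $d\mid B-A$. Applying Corollary~\ref{Plinebundle} to $[\O_\PP(-A)]+[\O_\PP(-A-\Delta)]$ and to $[L]=[\O_\PP(-B)]$, the residues modulo $d$ of $-A$, $-A-\Delta$ and $-B$ coincide, so the stacky corrections match and a direct computation gives
\[
\mu_\cE(L)<\mu_\cE(\F)\iff B-A>\tfrac{\Delta}{2}.
\]

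The third step parameterizes the embeddings. By Theorem~\ref{main} and the type~I description in Section~6, such an embedding is specified in each chart $\U_i$ by a direction $q_i\in\PP^1\cong\PP(\F^{**}|_{P_i})$; the stacky $S$-family of $\F|_{\U_i}$ has two $1$-dimensional strips of widths $\Delta_i,\Delta_{i+1}$ and directions $p_i,p_{i+1}$ (indices cyclic). Using the generator positions of $L_{(B_1,B_2,B_3)}$ recorded in Section~6 --- at weights $(B_1,B_2)$, $(-B_2-B_3,B_2)$ and $(B_1,-B_1-B_3)$ in $\U_1,\U_2,\U_3$ respectively --- the pointwise inclusion $L|_{\U_i}\hookrightarrow\F|_{\U_i}$ imposes: $B_i-A_i=0$ is achievable only if $q_i=p_i$ and $q_{i-1}=p_i$, otherwise $B_i-A_i\geq\Delta_i$.

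Finally I would run the case analysis on the $p_i$. If some $\Delta_i=0$ the corresponding strip collapses and one directly produces an $L$ with $B-A\leq\Delta/2$. If all three $p_i$ agree, taking every $q_i$ equal to this common value gives $B-A=0$, destabilizing. If exactly two coincide (say $p_1=p_2$), a finite enumeration yields $\min(B-A)=\min(\Delta_3,\Delta_1+\Delta_2)\leq\Delta/2$, again destabilizing. In the generic case of mutually distinct $p_i$, each $q_i$ can match at most one of the two $p$-directions in $\U_i$, so at most one of $B_1-A_1,B_2-A_2,B_3-A_3$ can vanish; this yields $\min(B-A)=\min_{i\neq j}(\Delta_i+\Delta_j)$. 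Stability then becomes ``smallest pair sum exceeds $\Delta/2$'', which is precisely the triangle inequality at the largest $\Delta_k$ and, combined with $\Delta_i>0$, yields all three triangle inequalities. The main obstacle is the combinatorial bookkeeping in this case analysis, as each $\Delta_i$ constrains $B-A$ through two different charts and one must carefully match the shifts; the slope computation itself is clean thanks to the hypothesis $m\mid E$ on the generating sheaf.
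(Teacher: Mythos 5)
Your proposal is correct and follows essentially the same route as the paper: reduce to toric rank-$1$ subsheaves, compute modified slopes from the $K$-classes via Corollary \ref{Plinebundle} (the criterion $B-A>\Delta/2$ matches the paper's comparison of $\mu_{\L_i}$ with $\mu_\F$), and observe that the extremal subsheaves are the line bundles generated by $p_1,p_2,p_3$ with $B-A=\Delta_j+\Delta_k$, giving the triangle inequalities. The only thin spot is your reduction to equivariant destabilizers: uniqueness of the maximal destabilizing subsheaf handles semistability, but the strict-stability direction (a non-toric subsheaf of \emph{equal} slope forces a toric one) is the ``more subtle'' point the paper settles by extending \cite[Prop.~4.13]{Koo1}; your handling of the degenerate cases ($\Delta_i=0$ or coincident $p_i$) by explicit destabilizers, versus the paper's appeal to decomposability, is a harmless variation.
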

\begin{proof}
We start by observing that $\F$ is $\mu$-semistable if and only if $\mu(\G) \leq \mu(\F)$ for any \emph{toric} subsheaf $\G$ with $0< \mathrm{rk}(\G) < \mathrm{rk}(\F)$. This is immediate by noting that the Harder-Narasimhan filtration of $\F$ is $\T$-equivariant. It is also true that $\F$ is $\mu$-stable if and only if $\mu(\G) < \mu(\F)$ for any \emph{toric} subsheaf $\G$ with $0< \mathrm{rk}(\G) < \mathrm{rk}(\F)$. This statement is more subtle and is proved in the case of smooth toric varieties in \cite[Prop.~4.13]{Koo1}. The proof extends to our current setting.

Note that $\F$ can only be indecomposable if $w_1, w_2, w_3 >0$ and $p_1, p_2, p_3$ are mutually distinct so we assume this from now on. From the toric description of $\F$ in terms of stacky $S$-families, it is clear that $\F$ has three toric line bundles $\L_1$, $\L_2$, $\L_3$ generated by $p_1,p_2,p_3$ as a subsheaf. Moreover, any rank 1 toric subsheaf is contained in one of them. Their stacky $S$-families and $K$-group classes are easily computed
\[
[\L_1] = g^{w_2 + w_3}g^{u_1+u_2+u_3}, \ [\L_2] = g^{w_1 + w_3}g^{u_1+u_2+u_3}, \ [\L_3] = g^{w_1 + w_2}g^{u_1+u_2+u_3}.
\]
The $K$-group class of $\F$ was computed in Proposition \ref{rank2}. Since all these $K$-group classes are expressed as sums of elements $g^r$, we can use Corollary \ref{Plinebundle} to compute the top two coefficients of the modified Hilbert polynomials and hence the modified slopes
\begin{align*}
\mu_{\F} &= \frac{1}{m}(a+b+c-(w_1+w_2+w_3) - 2(u_1+u_2+u_3)) + \frac{2}{em} \sum_{i=1}^e n_i, \\ 
\mu_{\L_i} &= \frac{1}{m} (a+b+c - 2(w_j+w_k + u_1+u_2+u_3)) + \frac{2}{em} \sum_{i=1}^e n_i,
\end{align*}
for any $\{i,j,k\}=\{1,2,3\}$. Since any other rank 1 toric subsheaf is contained in some $\L_i$ and has modified slope $\leq \mu_{\L_i}$, it suffices to test $\mu$-stability for $\L_1, \L_2, \L_3$ and the proposition follows. 
\end{proof}

From Proposition \ref{rank2}, it is easy to compute the Chern character of any type I rank 2 toric locally free sheaf on $\PP$. Since we are only interested in indecomposable locally free sheaves, we assume all $w_i > 0$ and $p_i$ are mutually distinct. Moreover, since we are only interested in $\T$-equivariant structures up to tensoring by a character, we suppose $u_1=u_2=0$. See Section 4.2 for a discussion of the Chern character map $\tch : K_0(\PP)_\Q \rightarrow A^*(I\PP)_{\bmu_\infty}$ and the indexing sets $D, D_{i}, D_{ij}$ (Definition \ref{indexsets}).

\begin{lemma} \label{rank2tch}
Let $\F$ be a type I rank 2 toric locally free sheaf on $\PP$. Let its stacky $S$-families be described by $u_1,u_2,u_3 \in \Z$, $w_1,w_2,w_3 \in \Z_{\geq 0}$ satisfying $b \mid w_1$, $c \mid w_2$, $a \mid w_3$ and $(p_1,p_2,p_3) \in (\PP^1)^3$. Assume all $w_i >0$, $p_i$ are mutually distinct and $u_1=u_2=0$, $u:=u_3$. Then $\tch(\F)$ is given by
\[
e^{-2 \pi \sqrt{-1} f u} \bigg\{ 2 - \big(2u + w_1+w_2+w_3\big) x + \big(u^{2} + (w_1+w_2+w_3)u + \frac{1}{2}(w_{1}^{2}+w_{2}^{2}+w_{3}^{2})\big)x^2 \bigg\},
\]for any $f \in D$;
$$
e^{-2 \pi \sqrt{-1} f u} \bigg\{(1+ e^{-2 \pi \sqrt{-1} f w_j})- \big((1+e^{-2 \pi \sqrt{-1} f w_j}) u + w_i + e^{-2 \pi \sqrt{-1} f w_j} w_ j + w_k\big)x \bigg\},
$$ for any $(i,j)=(1,2),(2,3),(3,1)$, $f \in D_{ij}$ and $k\in \{1,2,3\} \setminus \{i,j\}$;
$$
e^{-2 \pi \sqrt{-1} f u} \left\{e^{-2 \pi \sqrt{-1} f w_i}+e^{-2 \pi \sqrt{-1} f w_{i+1}}\right\},$$ for any $i=1,2,3$, $f \in D_i$ (and $w_4:=w_1$). Here $x$ always denotes the first Chern class of $\O_{\PP}(1)$ pulled-back along $\pi : I\PP \rightarrow \PP$ and restricted to the relevant component of $I\PP$.
\end{lemma}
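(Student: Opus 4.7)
The plan is to invoke Proposition \ref{rank2} and apply $\tch$ componentwise, using the decomposition $F=D\sqcup\bigsqcup D_{ij}\sqcup \bigsqcup D_i$ of Section 4.2. With $p_1,p_2,p_3$ mutually distinct all three Kronecker deltas vanish, and substituting $A_1=A_2=0$, $A_3=A$ simplifies $[\F]$ to
\[
g^A\Bigl(-2+2\sum_i g^{\Delta_i}-\sum_{i<j}g^{\Delta_i+\Delta_j}+g^{\Delta_1+\Delta_2+\Delta_3}\Bigr),
\]
which has rank $2$ (set $g=1$). Using $\tch(g^r)_f=e^{-2\pi\sqrt{-1}fr}\,e^{-rx_f}$ with $x_f$ the pull-back of $c_1(\O_\PP(1))$ to the $f$-component, the computation reduces to evaluating the universal symmetric function $P(\alpha_1,\alpha_2,\alpha_3):=-2+2\sum\alpha_i-\sum_{i<j}\alpha_i\alpha_j+\alpha_1\alpha_2\alpha_3$, with $\alpha_i:=e^{-2\pi\sqrt{-1}f\Delta_i}$, together with its formal $\Delta$-weighted derivative.

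The key combinatorial observation, organizing all three cases uniformly, is that the divisibility hypotheses $b\mid\Delta_1$, $c\mid\Delta_2$, $a\mid\Delta_3$ (cyclically $\hat{i+1}\mid\Delta_i$), combined with $d\mid\hat{i}$ and $d_{ij}\mid\hat{i},\hat{j}$, force enough of the $\alpha_i$ to equal $1$ to collapse $P$. Explicitly: all three $\alpha_i=1$ on $D$; on $D_{ij}$ (with $(i,j)\in\{(1,2),(2,3),(3,1)\}$ and $k$ the remaining index) the inclusions $d_{ij}\mid\hat{i}\mid\Delta_{i-1}$ and $d_{ij}\mid\hat{j}\mid\Delta_{j-1}$ give $\alpha_i=\alpha_k=1$ leaving only $\alpha_j$ nontrivial; and on $D_i$ the relation $\hat{i}\mid\Delta_{i-1}$ gives $\alpha_{i-1}=1$ while $\alpha_i,\alpha_{i+1}$ may remain nontrivial (indices mod $3$).

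Given this, each case is a short expansion. On $D$ the component $Z_f\cong\PP$ satisfies $x^3=0$; setting all $\alpha_i=1$ in $P$ gives constant $2$, the $x$-coefficient telescopes to $-(\Delta_1+\Delta_2+\Delta_3)$ via $2\sum\Delta_i-\sum(\Delta_i+\Delta_j)+\sum\Delta_i=-\sum\Delta_i$, and the $x^2$-coefficient evaluates to $\tfrac12(\Delta_1^2+\Delta_2^2+\Delta_3^2)$ using $\sum(\Delta_i+\Delta_j)^2=2\sum\Delta_i^2+2\sum_{i<j}\Delta_i\Delta_j$ together with $(\sum\Delta_i)^2=\sum\Delta_i^2+2\sum_{i<j}\Delta_i\Delta_j$; multiplying by $e^{-Ax}=1-Ax+\tfrac12A^2x^2$ and the equivariant prefactor $e^{-2\pi\sqrt{-1}fA}$ yields the stated quadratic. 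On $D_{ij}$ the component is $1$-dimensional so $x^2=0$; with $\alpha_i=\alpha_k=1$ and $\alpha_j=:\alpha$, $P$ collapses to $1+\alpha$ and the linear-in-$x$ coefficient collapses to $\Delta_i+\alpha\Delta_j+\Delta_k$, and multiplication by $(1-Ax)e^{-2\pi\sqrt{-1}fA}$ gives the claimed formula. On $D_i$ the component is $0$-dimensional so only $P$ contributes; with $\alpha_{i-1}=1$ a direct cancellation reduces $P$ to $\alpha_i+\alpha_{i+1}$, producing the stated expression.

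The main obstacle is purely bookkeeping: tracking precisely which $\alpha_i$ trivialize on each stratum and verifying the telescoping of $P$ and its linear perturbation at each order in $x$. Once the divisibility pattern is organized cyclically as above, everything else is routine algebraic expansion; no conceptual input beyond Proposition \ref{rank2}, the component decomposition of $I\PP$, and the To\"en--Riemann--Roch formalism of $\tch$ is required.
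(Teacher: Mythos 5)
Your proposal is correct and coincides with the paper's (essentially unwritten) argument: the paper derives Lemma \ref{rank2tch} directly from Proposition \ref{rank2} by applying $\tch$ componentwise over the decomposition $F=D\sqcup\bigsqcup D_{ij}\sqcup\bigsqcup D_i$, which is precisely the computation you carry out. Your identification of which $\alpha_m=e^{-2\pi\sqrt{-1}f\Delta_m}$ trivialize on each stratum (all on $D$; $\alpha_i,\alpha_k$ on $D_{ij}$; $\alpha_{i-1}$ on $D_i$) and the resulting collapses of $P$ to $2$, $1+\alpha_j$, and $\alpha_i+\alpha_{i+1}$ all check out, as do the $x$- and $x^2$-coefficients.
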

\begin{proof}
By direct computation from Proposition \ref{rank2}.
\end{proof}

Fix $\alpha \in A^0(I\PP)_{\bmu_{\infty}}$ and $\beta \in A^1(I\PP)_{\bmu_{\infty}}$ (parts of) the Chern character of a rank 2 locally free sheaf on $\PP$. Let $$\tch(u, w_1, w_2, w_3) \in A^*(I\PP)_{\bmu_\infty}$$ be the formula for the Chern character as appearing in the previous lemma. Consider the following system of equations in variables $u \in \Z$, $w_1,w_2,w_3 \in \Z_{>0}$ 
\begin{align*}
\tch^2(u, w_1, w_2, w_3) = \alpha, \ \tch^1(u, w_1, w_2, w_3) = \beta.
\end{align*}
Note that $\tch^2$ only has components in $D$ and therefore the $\tch^2$ equation does not depend on $w_1,w_2,w_3$. The $\tch^2$ equation is automatically satisfied in the case of a trivial gerbe structure, i.e.~when $d=1$. In the case of a general gerbe structure the $\tch^2$ equation becomes
\begin{equation}
\alpha = \left\{ \begin{array}{cc} 2d & \mathrm{if} \ u \equiv 0 \mod d \\ 0 & \mathrm{otherwise}. \end{array}\right..
\end{equation}
so imposes a divisorial condition on $u$.
Next, look at the $f = 0 \in D$ part of the $\tch^1$ equation and let $\beta_0$ be the $f=0 \in D$ part of $\beta$. This equation has a solution if and only if $$2 \ | \ \beta_0 + w_1 + w_2 + w_3,$$ in which case
\[
u = -\frac{1}{2} \left(\beta_0 + w_1 + w_2 + w_3 \right).
\]
What remains are the $\tch^1$ equations for $f \in D \setminus 0$, which we analyze further in examples.

For each $f \in D$ we introduce a formal variable $p_f$. Likewise, for each $f \in D_{ij}$ we introduce a formal variable $q_{ij,f}$. Finally, for each $f \in D_{i}$ we introduce a formal variable $r_{i,f}$. Note that by working with Chern character instead of $K$-group class, we know these variables are \emph{independent}. We can now explicitly compute the generating function $\rH^{\vb}_{\alpha,\beta}$.
\begin{theorem} \label{mainrank2}
For any $\alpha \in A^0(I\PP)_{\bmu_{\infty}}$ and $\beta \in A^1(I\PP)_{\bmu_{\infty}}$ (part of) the Chern character of a rank 2 locally free sheaf on $\PP$, the generating function $\rH^{\vb}_{\alpha,\beta}$ is given by
\[
\sum_{(u,w_1,w_2,w_3) \in C_{\alpha,\beta}} \prod_{f \in D} p_{f}^{\tch^0(u,w_1,w_2,w_3)_f} \prod_{{\scriptsize{\begin{array}{c} i<j \\f \in D_{ij}\end{array}}}} q_{ij,f}^{\tch^0(u,w_1,w_2,w_3)_f}  \prod_{i, f \in D_{i}} r_{i,f}^{\tch^0(u,w_1,w_2,w_3)_f},
\]
where
\begin{align*}
C_{\alpha,\beta} := \big\{ (u,w_1,w_2,w_3) &\in \Z \times \Z_{> 0}^{3} \ : \ b \mid w_1, \ c \mid w_2, \ a \mid w_3, \ \tch^2(u,w_1, w_2, w_3) = \alpha, \\
&\tch^1(u,w_1, w_2, w_3) = \beta, \ w_i < w_j + w_k \ \forall \{i,j,k\}=\{1,2,3\} \big\}.
\end{align*}
Here $\tch^k(u,w_1,w_2,w_3)$ is the explicit formula for the codegree $k$ part of the Chern character appearing in Lemma \ref{rank2tch} and $\tch^k(u,w_1,w_2,w_3)_f$, $f \in F$ are its various components.
\end{theorem}
\begin{proof}
We start as in the rank 1 case. Fix a Chern character $\tch \in A^*(I\PP)_{\bmu_{\infty}}$ of a rank 2 locally free sheaf on $\PP$. The moduli scheme $N(\tch)$ of $\mu$-stable locally free sheaves on $\PP$ with Chern character $\tch$ has a torus action, which at the level of closed points is given by
\[
t\cdot[\F] := [t^* \F], \ t \in \T, \ \F \in N(\tch).
\]
By localization, $e(N(\tch)) = e(N(\tch)^\T)$. As a set, $N(\tch)^\T$ is the collection of isomorphism classes of rank 2 $\mu$-stable locally free sheaves $\F$ on $\PP$ with Chern character $\tch$ satisfying $t^* \F \cong \F$ for all $t \in \T$. Since any such $\F$ is simple, it admits a $\T$-equivariant structure \cite[Prop.~4.4]{Koo1} and this $\T$-equivariant structure is unique up to tensoring by a character of $\T$ \cite[Prop.~4.5]{Koo1}. Consequently, as a set, $N(\tch)^\T$ is the collection of \emph{equivariant} isomorphism classes of $\mu$-stable rank 2 toric locally free sheaves on $\PP$ with class $\tch$, where two such classes $[\F], [\F']$ are identified whenever $\F$ and $\F'$ differ by a character, i.e. whenever $\F \cong \F' \otimes L_{(u_1,u_2,u_3)}$ for some $u_1+u_2+u_3 = 0$. 

Therefore, any element of $N(\tch)^\T$ can be \emph{uniquely} represented by stacky $S$-families $\{\hat{F}_i\}_{i=1,2,3}$ of a $\mu$-stable rank 2 toric locally free sheaf as in Proposition \ref{rank2} satisfying 
\begin{align*}
&\tch(\F) = \tch, \ u_2 = u_3 = 0, \ p_1 = (1:0), \ p_2 = (0:1), \ p_3 = (1:1), \\
&0< w_1 < w_2 + w_3, \ 0< w_2 < w_1 + w_3, \ 0< w_3 < w_1 + w_2.
\end{align*}
Here we have used Proposition \ref{rank2classify} to characterise $\mu$-stability. Defining $u:=u_3$, the result follows by summing over all $(u,w_1,w_2,w_3)$ subject to the above constraints.
\end{proof}

The generating function of the previous theorem is the most refined version possible. Easier formulae can be obtained by specializing. One such specialization is by grouping together moduli spaces with the same value of the ``modified holomorphic Euler characteristic'' $\chi_{\cE}(\cdot) = P_{\O_{\bf{P}}(1), \cE}(\cdot,0) \in \Z$. In this context, it is more natural to fix $$\alpha = \tch^2(\cdot \otimes \cE) \in A^0(I\PP)_{\bmu_{\infty}}, \ \beta = \tch^1(\cdot \otimes \cE) \in A^1(I\PP)_{\bmu_{\infty}}$$ as our topological invariants. In other words, we are interested in the Euler characteristic of the moduli space of $\mu$-stable rank 2 locally free sheaves $\F$ on $\PP$ with $\tch^2(\F \otimes \cE) = \alpha$, $\tch^1(\F \otimes \cE) = \beta$ and fixed $\chi_{\cE}(\F) \in \Z$. We denote by $q$ the formal variable corresponding to $\chi_{\cE}(\F)$ and denote the corresponding generating function by $$\rH^{\vb}_{\alpha,\beta}(q).$$ 

Let $\F$ be a $\mu$-stable rank 2 toric locally free sheaf on $\PP$ described, as above, by the data $u_1=u_2=0$, $u:=u_3 \in \Z$, $w_1, w_2, w_3 \in \Z_{>0}$ satisfying $$b \mid w_1, \ c \mid w_2, \ a \mid w_3.$$ Define the function $$\phi_E(t):= t+2t^2+3t^3+\cdots+(E-1)t^{E-1}.$$ Then
\begin{align*}
\tch^2(\F \otimes \cE)_0 &= 2E, \\
\tch^2(\F \otimes \cE)_f &= 0, \ \forall f \in D \setminus 0, \\
\tch^1(\F \otimes \cE)_0 &= -(2u+w_1+w_2+w_3 - (E-1))E, \\
\tch^1(\F \otimes \cE)_f &= 2 \phi_E(e^{2 \pi \sqrt{-1} f}) e^{-2 \pi \sqrt{-1} f u}, \ \forall f \in D \setminus 0, \\
\tch^1(\F \otimes \cE)_f &= 0, \ \forall f \in D_{12}, D_{13}, D_{23}.
\end{align*}
Here the vanishing in the second and last line are due to the presence of the generating sheaf $\cE$. From these equations, we see that the classes $\alpha$, $\beta$ must satisfy corresponding vanishing and divisorial properties in order for $\rH^{\vb}_{\alpha,\beta}(q)$ to be non-trivial. In particular $\beta_0$ has to be divisible by $E$.  Suppose this is the case and define $$c_1 := \frac{\beta_0}{E}-(E-1)E.$$ 
Then the above equations are equivalent to
\begin{align*} 
-2u-w_1-w_2-w_3 &= c_1, \\
2 \phi_E(e^{2 \pi \sqrt{-1} f}) e^{-2 \pi \sqrt{-1} f u} &= \beta_f \ \mathrm{for \ all \ } f \in D \setminus 0. 
\end{align*}
These equations should be thought of as fixing ``the first Chern class''. The second set of equations only depend on the value of $u$ modulo $d$ and can be thought of as fixing the $\bmu_d$-eigenvalue of the rank 2 locally free sheaves we are considering. Therefore instead of fixing $\alpha, \beta$ and imposing the equations above, we fix $c_1 \in \Z$, $\lambda \in \{0, \ldots, d-1\}$ and impose the equations 
\begin{align} 
\begin{split} \label{c1lambda}
-2u-w_1-w_2-w_3 &= c_1, \\
u &\equiv \lambda \mod d,
\end{split}
\end{align}
and refer to the resulting generating function as $\rH^{\vb}_{c_1, \lambda}(q)$.

We now compute $\chi_{\cE}(\F)$. First some necessary notation. For any integers $m$ and $n>0$, we denote by $[m]_n$ the unique value $k \in \{0, \ldots, n-1\}$ for which $m \equiv k \ \mathrm{mod} \ n$. Also, for integers $m_1,m_2,m_3$ and $n>0$ define $$\psi_E(m_1,m_2,m_3,n) := \sum_{{\scriptsize{\begin{array}{c} k=1 \\ \frac{n}{\mathrm{gcd}(m_1,n)} \nmid k \end{array}}}}^{n-1}\frac{1+e^{-\frac{2 \pi \sqrt{-1} k m_2}{n}}}{1 - e^{\frac{2 \pi \sqrt{-1} k m_1}{n}}} e^{-\frac{2 \pi \sqrt{-1} k m_3}{n}} \phi_E(e^{\frac{2 \pi \sqrt{-1} k}{n}}).$$ Lemma \ref{rank2tch} together with the expression for the Todd classes in Section 4.2 gives the following formula for the modified holomorphic Euler characteristic $\chi_{\cE}(\F)$
\begin{align*}
\chi_{\cE}(\F) = &\frac{E}{abc}\Bigg\{\frac{1}{4} c_{1}^{2} + \frac{1}{4} \sum_{i=1}^{3} w_{i}^{2} - \frac{1}{2} \sum_{1 \leq i<j \leq 3} w_i w_j + \frac{1}{2} c_1 \sum_{i=1}^{3} \hat{i} + \frac{1}{6} \sum_{i=1}^{3} \hat{i}^2 + \frac{1}{2} \sum_{1 \leq i < j \leq 3} \hat{i} \hat{j} + [u]_{d}^{2} \\
&+ \Big(c_1+E-d + \sum_{i=1}^{3} \hat{i}\Big) [u]_d + \frac{1}{2}(E-d)\Big(c_1+\sum_{i=1}^{3}\hat{i}\Big)  + \frac{1}{3} E^2 - \frac{1}{2} E d + \frac{1}{6} d^2 \Bigg\} \\
&+ \sum_{1 \leq i < j \leq 3} \frac{1}{\hat{i}\hat{j}}\psi_E(\hat{k},w_{k-1},u,d_{ij}),
\end{align*}
where $\hat{i}$ was defined in Section 4.1 and $w_0 := w_3$. Substituting
\[
u = -\frac{1}{2} \Big(c_1+\sum_{i=1}^{3} w_i \Big)
\]
in the above expression gives a function which only depends on $w_1, w_2, w_3$ (for fixed $\PP$, $E$, $c_1$). We denote this function by
\[
Q_{E, c_1}(w_1,w_2,w_3).
\]
From Theorem \ref{mainrank2} we deduce the following corollary.
\begin{corollary} \label{unrefined}
For any $c_1 \in \Z$ and $\lambda \in \{0,\ldots, d-1\}$
\[
\rH_{c_1, \lambda}^{\vb}(q) = \sum_{(w_1,w_2,w_3) \in C_{c_1, \lambda}} q^{Q_{E, c_1}(w_1,w_2,w_3)},
\]
where
\begin{align*}
C_{c_1, \lambda} := \Big\{&(w_1,w_2,w_3) \in \Z_{>0} \ : \ b \mid w_1, \ c \mid w_2, \ a \mid w_3, \ 2 \mid c_1 + \sum_{i=1}^{3} w_i, \\
&-\frac{1}{2}\Big(c_1+\sum_{i=1}^{3} w_i \Big) \equiv \lambda \ \mathrm{mod} \ d, \  w_i < w_j + w_k \ \forall \{i,j,k\} = \{1,2,3\} \Big\}
\end{align*}
and $Q_{E, c_1}(w_1,w_2,w_3)$ was defined above.
\end{corollary}

We end by computing rank 2 generating functions in several explicit examples. 
In examples 1--3, it is easy to see that if $\gcd(2,c_1)=1$, then there are no strictly $\mu$-semistables around. In example 4, a more complicated numerical criterion for the absence of strictly $\mu$-semistables is given. \\

\noindent \textbf{Example 1.}  Let $\PP = \PP^2$, then $I\PP = \PP$. We take $\cE := \O_\PP$. Fixing $\alpha = 2$ and $\beta = c_1 = (\tch^1)_0$, the generating function $\rH^{\vb}_{\alpha,\beta}$ is given by
\[
\sum_{(w_1,w_2,w_3) \in C_{c_1}} p^{Q_{c_1}(w_1,w_2,w_3)},
\]
where
\begin{align*}
&C_{c_1} := \Big\{(w_1,w_2,w_3) \in \Z_{>0} \ : \ 2 \mid c_1 + \sum_{i=1}^{3} w_i, \ w_i < w_j + w_k \ \forall \{i,j,k\} = \{1,2,3\} \Big\}, \\
&Q_{c_1}(w_1,w_2,w_3) := \frac{1}{4}c_{1}^{2} + \frac{1}{4} \sum_{i=1}^{3} w_{i}^{2} - \frac{1}{2} \sum_{1 \leq i < j \leq 3} w_i w_j. 
\end{align*}
Replacing $p$ by $q$, this is also equal to the generating function $\rH^{\vb}_{c_1,0}(q)$ (up to an additional constant $\frac{3}{2}c_1+2$ in the power of $q$ making it integer-valued). 
This generating function was first computed (also by torus localization) by Klyachko \cite{Kly2}. He expressed his answer in terms of Hurwitz class numbers as explained at the end of this section in the proof of Theorem \ref{intro}. \\

\noindent \textbf{Example 2.} Let $\PP = \PP(1,1,2)$, then $I\PP = \PP \sqcup \PP(2)$. We take $\cE := \O_\PP \oplus \O_\PP(1)$. The codegree 0 Chern character over the twisted sector $\PP(2)$ is (Lemma \ref{rank2tch}) 
\[
(\tch^0)_{1/2} = (-1)^{u}((-1)^{w_1}+(-1)^{w_3}).
\]
Recall that $2 \mid w_2$. This part of the Chern character can only take values $-2,0,2$. We denote the variable keeping track of $(\tch^0)_{0}$ by $p$ and the variable keeping track of $(\tch^0)_{1/2}$ by $r$. For any subset $X \subset \Z^3$, we denote by $\mathbbm{1}_X$ the characteristic function taking value 1 on $X$ and 0 elsewhere. Fixing $\alpha = 2$ and $\beta = c_1 = (\tch^1)_0$, the generating function $\rH^{\vb}_{\alpha,\beta}$ is
\[
\sum_{(w_1,w_2,w_3) \in C_{c_1}} X_{c_1}(r) p^{Q_{c_1}(w_1,w_2,w_3)},
\]
where
\begin{align*}
&C_{c_1} := \Big\{(w_1,w_2,w_3) \in \Z_{>0} \ : \ 2 \mid c_1 + \sum_{i=1}^{3} w_i, 2 \mid w_2, w_i < w_j + w_k \ \forall \{i,j,k\} = \{1,2,3\} \Big\}, \\
&Q_{c_1}(w_1,w_2,w_3) := \frac{1}{4}c_{1}^{2} + \frac{1}{4} \sum_{i=1}^{3} w_{i}^{2} - \frac{1}{2} \sum_{1 \leq i < j \leq 3} w_i w_j, 
\end{align*}
\begin{align*}
&X_{c_1}(r) := \Bigg(\mathbbm{1}_{{\scriptsize{\left\{\begin{array}{c}  4 \nmid c_1 + \sum_{i=1}^{3} w_i \\ 2 \mid w_1, 2 \mid w_3 \end{array}\right\} }}} + \mathbbm{1}_{{\scriptsize{\left\{\begin{array}{c}  4 \mid c_1 + \sum_{i=1}^{3} w_i \\ 2 \nmid w_1, 2 \nmid w_3 \end{array}\right\} }}} \Bigg) r^{-2} \\
&\qquad\qquad +\Big(\mathbbm{1}_{{\scriptsize{\left\{\begin{array}{c} 2 \mid w_1, 2 \nmid w_3 \end{array}\right\} }}} + \mathbbm{1}_{{\scriptsize{\left\{\begin{array}{c}  2 \nmid w_1, 2 \mid w_3 \end{array}\right\} }}} \Big) r^0 \\
&\qquad\qquad +\Bigg(\mathbbm{1}_{{\scriptsize{\left\{\begin{array}{c}  4 \mid c_1 + \sum_{i=1}^{3} w_i \\ 2 \mid w_1, 2 \mid w_3 \end{array}\right\} }}} + \mathbbm{1}_{{\scriptsize{\left\{\begin{array}{c}  4 \nmid c_1 + \sum_{i=1}^{3} w_i \\ 2 \nmid w_1, 2 \nmid w_3 \end{array}\right\} }}} \Bigg) r^2. 
\end{align*}
Note that $X_{c_1}(1)=1$. The specialised generating function $\rH_{c_1,0}^{\vb}(q)$ is obtained by setting $r=1$ in the above expression (up to an addition constant $\frac{5}{2} c_1 + 6$ in the power of $q$). This is the same generating function as in example 1 (up to a constant $c_1+4$ in the power of $q$) \emph{except} for the additional divisorial condition $2 \mid w_2$ in the definition of $C_{c_1}$! \\

\noindent \textbf{Example 3.} Let $\PP = \PP(1,2,2)$, then $I\PP = \PP \sqcup \PP(2,2)$. We take $\cE := \O_\PP \oplus \O_\PP(1)$. This time, the part of $A^*(I\PP)_{\bmu_{\infty}}$ coming from the twisted sector $\PP(2,2)$ has a codegree 1 and 0 part. As before, we fix $\alpha = 2$, $\beta_0 = c_1 = (\tch^1)_0$, but this time we also fix $\beta_{1/2} = (\tch^1)_{1/2}$. The formula for $\beta_{1/2}$ is
\[
\beta_{1/2} = (-1)^{u}(1+(-1)^{w_3}),
\]
so it can take values $-2,0,2$. Recall that $2 \mid w_1$ and $2 \mid w_2$. Each of the possible values of $\beta_{1/2}$ gives a different generating function. We denote the variable keeping track of $(\tch^0)_{0}$ by $p$ and the variable keeping track of $(\tch^0)_{1/2}$ by $q$. \\

\noindent \emph{Case (1):} Fix $\beta_{1/2} = -2$. This is equivalent to $2 \mid w_3$ and $4 \nmid c_1 + \sum_{i=1}^{3} w_i$. In this case 
\begin{align*}
(\tch^0)_{1/2} &= -(-1)^{u}\big((1+(-1)^{w_3}) u + w_1 + w_2 + (-1)^{w_3} w_3 \big) = 2u + \sum_{i=1}^{3} w_i = -c_1,
\end{align*}
so the generating function $\rH^{\vb}_{\alpha,\beta}$ is given by
\[
\sum_{(w_1,w_2,w_3) \in C_{c_1}^{(1)}} p^{Q_{c_1}^{(1)}(w_1,w_2,w_3)} q^{R_{c_1}^{(1)}(w_1,w_2,w_3)},
\]
where
\begin{align*}
&C_{c_1}^{(1)} := \Big\{(w_1,w_2,w_3) \in \Z_{>0} \ : \ 2 \mid c_1 + \sum_{i=1}^{3} w_i, \ 4 \nmid c_1 + \sum_{i=1}^{3} w_i, \ 2 \mid w_1, \ 2 \mid w_2, \ 2 \mid w_3, \\ 
&\qquad\qquad\qquad\qquad\qquad\qquad\qquad \!\!\! w_i < w_j + w_k \ \forall \{i,j,k\} = \{1,2,3\} \Big\}, \\
&Q_{c_1}^{(1)}(w_1,w_2,w_3) := \frac{1}{4}c_{1}^{2} + \frac{1}{4} \sum_{i=1}^{3} w_{i}^{2} - \frac{1}{2} \sum_{1 \leq i < j \leq 3} w_i w_j, \\
&R_{c_1}^{(1)}(w_1,w_2,w_3) := -c_1.
\end{align*}

\noindent \emph{Case (2):} Fix $\beta_{1/2} = 0$. This is equivalent to $2 \nmid w_3$. In this case, the formula for $(\tch^0)_{1/2}$ is
\begin{align*}
(\tch^0)_{1/2} &= -(-1)^{u}\big((1+(-1)^{w_3}) u + w_1 + w_2 + (-1)^{w_3} w_3 \big) \\
&= (-1)^{\frac{1}{2}(c_1 + \sum_{i=1}^{3} w_i)}(-w_1-w_2+w_3).
\end{align*}
The generating function $\rH^{\vb}_{\alpha,\beta}$ is given by
\[
\sum_{(w_1,w_2,w_3) \in C_{c_1}^{(2)}} p^{Q_{c_1}^{(2)}(w_1,w_2,w_3)} q^{R_{c_1}^{(2)}(w_1,w_2,w_3)},
\]
where
\begin{align*}
&C_{c_1}^{(2)} := \Big\{(w_1,w_2,w_3) \in \Z_{>0} \ : \ 2 \mid c_1 + \sum_{i=1}^{3} w_i, \ 2 \mid w_1, \ 2 \mid w_2, \ 2 \nmid w_3, \\ 
&\qquad\qquad\qquad\qquad\qquad\qquad\qquad \!\!\! w_i < w_j + w_k \ \forall \{i,j,k\} = \{1,2,3\} \Big\}, \\
&Q_{c_1}^{(2)}(w_1,w_2,w_3) := \frac{1}{4}c_{1}^{2} + \frac{1}{4} \sum_{i=1}^{3} w_{i}^{2} - \frac{1}{2} \sum_{1 \leq i < j \leq 3} w_i w_j, \\
&R_{c_1}^{(2)}(w_1,w_2,w_3) := (-1)^{\frac{1}{2}(c_1 + \sum_{i=1}^{3} w_i)}(-w_1-w_2+w_3).
\end{align*}

\noindent \emph{Case (3):} Fix $\beta_{1/2} = 2$. Similar to case I, we obtain
\[
\sum_{(w_1,w_2,w_3) \in C_{c_1}^{(3)}} p^{Q_{c_1}^{(3)}(w_1,w_2,w_3)} q^{R_{c_1}^{(3)}(w_1,w_2,w_3)},
\]
where
\begin{align*}
&C_{c_1}^{(3)} := \Big\{(w_1,w_2,w_3) \in \Z_{>0} \ : \ 4 \mid c_1 + \sum_{i=1}^{3} w_i, \ 2 \mid w_1, \ 2 \mid w_2, \ 2 \mid w_3, \\ 
&\qquad\qquad\qquad\qquad\qquad\qquad\qquad \!\!\! w_i < w_j + w_k \ \forall \{i,j,k\} = \{1,2,3\} \Big\}, \\
&Q_{c_1}^{(3)}(w_1,w_2,w_3) := \frac{1}{4}c_{1}^{2} + \frac{1}{4} \sum_{i=1}^{3} w_{i}^{2} - \frac{1}{2} \sum_{1 \leq i < j \leq 3} w_i w_j, \\
&R_{c_1}^{(3)}(w_1,w_2,w_3) := c_1.
\end{align*}
The specialized generating function $\rH^{\vb}_{c_1,0}(q)$ is given by
\[
\sum_{(w_1,w_2,w_3) \in C_{c_1}} q^{Q_{c_1}(w_1,w_2,w_3)},
\]
where
\begin{align*}
&C_{c_1} := \Big\{(w_1,w_2,w_3) \in \Z_{>0} \ : \ 2 \mid c_1 + \sum_{i=1}^{3} w_i, \ 2 \mid w_1, \ 2 \mid w_2, \\ 
&\qquad\qquad\qquad\qquad\qquad\qquad\qquad \!\!\!\!\! w_i < w_j + w_k \ \forall \{i,j,k\} = \{1,2,3\} \Big\}, \\
&Q_{c_1}(w_1,w_2,w_3) := \frac{1}{8}c_{1}^{2} + \frac{3}{2} c_1 + \frac{17}{4} + \frac{1}{8} \sum_{i=1}^{3} w_{i}^{2} - \frac{1}{4} \sum_{1 \leq i < j \leq 3} w_i w_j \\
&\qquad\qquad\qquad\quad\quad - \frac{1}{8}(-1)^{\frac{1}{2}(c_1+\sum_{i=1}^{3} w_i)}(1+(-1)^{w_3}). 
\end{align*}

\noindent \textbf{Example 4.} Let $\PP = \PP(2,2,2)$, then $I\PP = \PP \sqcup \PP$. We take $\cE := \O_\PP \oplus \O_\PP(1)$. This time, we fix $(\tch^2)_f$ and $(\tch^1)_f$ for $f \in D = \{0,1/2\}$. We take $\alpha_0 = 2$. Fixing $\alpha_{1/2} = (\tch^2)_{1/2} = \pm 2$ is equivalent to fixing the parity of $u$. The generating function is zero unless $$\beta_0 = c_1 =(\tch^1)_0 = (-1)^u(\tch^1)_{1/2} = (-1)^u \beta_{1/2},$$ so we assume this is the case. We use the formal variable $p_0$ to keep track of $(\tch^0)_0$ and $p_{1}$ to keep track of $(\tch^0)_{1/2}$. \\

\noindent \emph{Case (1):} Fix $\alpha_{1/2} = -2$. Then $\rH^{\vb}_{\alpha,\beta}$ is
\[
\sum_{(w_1,w_2,w_3) \in C_{c_1}^{(1)}} (p_0 p_{1}^{-1})^{Q_{c_1}^{(1)}(w_1,w_2,w_3)},
\]
where
\begin{align*}
&C_{c_1}^{(1)} := \Big\{(w_1,w_2,w_3) \in \Z_{>0} \ : \ 2 \mid c_1 + \sum_{i=1}^{3} w_i, \ 4 \nmid c_1 + \sum_{i=1}^{3} w_i, \ 2 \mid w_1, \ 2 \mid w_2, \ 2 \mid w_3, \\ 
&\qquad\qquad\qquad\qquad\qquad\qquad\qquad \!\!\! w_i < w_j + w_k \ \forall \{i,j,k\} = \{1,2,3\} \Big\}, \\
&Q_{c_1}^{(1)}(w_1,w_2,w_3) := \frac{1}{4}c_{1}^{2} + \frac{1}{4} \sum_{i=1}^{3} w_{i}^{2} - \frac{1}{2} \sum_{1 \leq i < j \leq 3} w_i w_j.
\end{align*}

\noindent \emph{Case (2):} Fix $\alpha_{1/2} = 2$. Then $\rH^{\vb}_{\alpha,\beta}$ is
\[
\sum_{(w_1,w_2,w_3) \in C_{c_1}^{(2)}} (p_0 p_{1})^{Q_{c_1}^{(2)}(w_1,w_2,w_3)},
\]
where
\begin{align*}
&C_{c_1}^{(2)} := \Big\{(w_1,w_2,w_3) \in \Z_{>0} \ : \ 4 \mid c_1 + \sum_{i=1}^{3} w_i, \ 2 \mid w_1, \ 2 \mid w_2, \ 2 \mid w_3, \\ 
&\qquad\qquad\qquad\qquad\qquad\qquad\qquad \!\!\! w_i < w_j + w_k \ \forall \{i,j,k\} = \{1,2,3\} \Big\}, \\
&Q_{c_1}^{(2)}(w_1,w_2,w_3) := \frac{1}{4}c_{1}^{2} + \frac{1}{4} \sum_{i=1}^{3} w_{i}^{2} - \frac{1}{2} \sum_{1 \leq i < j \leq 3} w_i w_j.
\end{align*}
For the specialized generating function $\rH^{\vb}_{c_1,\lambda}(q)$, we fix $c_1 \in \Z$ and a $\bmu_2$-eigenvalue $\lambda \in \{0,1\}$. Note that the generating function is zero unless $c_1$ is even, so assume this is the case. We get the following expression for $\rH^{\vb}_{c_1,\lambda}(q)$ 
\[
\sum_{(w_1,w_2,w_3) \in C_{c_1}} q^{Q_{c_1}(w_1,w_2,w_3)},
\]
where
\begin{align*}
&C_{c_1} := \Big\{(w_1,w_2,w_3) \in \Z_{>0} \ : \ -\frac{1}{2}\big( c_1 + \sum_{i=1}^{3} w_i \big) \equiv \lambda \ \mathrm{mod} \ 2, \ 2 \mid w_1, \ 2 \mid w_2, 2 \mid w_3, \\ 
&\qquad\qquad\qquad\qquad\qquad\qquad\qquad \!\!\! w_i < w_j + w_k \ \forall \{i,j,k\} = \{1,2,3\} \Big\}, \\
&Q_{c_1}(w_1,w_2,w_3) := \frac{1}{16}c_{1}^{2} + \frac{7}{8}c_1 + \frac{23}{8} - \frac{1}{8} (-1)^{\lambda} (c_1 + 7) + \frac{1}{16} \sum_{i=1}^{3} w_{i}^{2} - \frac{1}{8} \sum_{1 \leq i < j \leq 3} w_i w_j . 
\end{align*}
For $\lambda$ even, this equals the generating function $\rH^{\vb}_{\frac{c_1}{2},0}(q)$ of $\PP^2$. For $\lambda$ odd, this equals the generating function $\rH^{\vb}_{\frac{c_1}{2}+1,0}(q)$ of $\PP^2$. It is not hard to see that for the choices $(\frac{1}{2}c_1,\lambda)=(\mathrm{odd},0), (\mathrm{even},1)$, there are no strictly $\mu$-semistables.  

From these examples, we can prove Theorem \ref{intro} of the introduction, which expresses the generating function $\rH^{\vb}_{c_1,\lambda}(q)$ of $\PP(a,b,c)$ with $a,b,c \leq 2$ in terms of Hurwitz class numbers $H(\Delta)$ and the sum of divisors function $\sigma_0(n)$, and discusses modularity. 

\begin{proof} [Proof of Theorem \ref{intro}]
Most of the proof is a variation on the reasoning in Klyachko's paper \cite{Kly3}. For fixed $c_1 \in \Z$ and $\Delta \in \Z$ consider the system of equalities and inequalities
\begin{align}
\begin{split} \label{P2case}
&(w_1, w_2, w_3) \in \Z_{>0}^{3} \ \mathrm{such \ that}  \\
&w_1 < w_2 + w_3, w_2 < w_1 + w_3, w_3 < w_1 + w_2 \ \mathrm{and} \\
&-\Delta = \sum_{i=1}^{3} w_{i}^{2} - 2 \sum_{i<j} w_i w_j, \ -\Delta \equiv c_{1}^{2} \ \mathrm{mod} \ 4.
\end{split}
\end{align}
Klyachko shows the number of solutions to this system is $3H(\Delta)$ when $\Delta \equiv -1 \ \mathrm{mod} \ 4$ and $3(H(\Delta) - \frac{1}{2} \sigma_0(\Delta/4))$ when $\Delta \equiv 0 \ \mathrm{mod} \ 4$. Note that the equations imply that $\Delta$ is congruent $-1$ or $0$ mod $4$. Also note that the equations imply\footnote{This gives a short toric proof of the Bogomolov inequality for toric rank 2 $\mu$-stable locally free sheaves on $\PP^2$.} $\Delta > 0$. The way this is proved is as follows. Split up 13 disjoint cases
$$
w_i < w_j < w_k, \ w_i = w_j < w_k, \ w_i < w_j = w_k, \ w_1 = w_2 = w_3,
$$
for each choice of $\{i,j,k\}=\{1,2,3\}$ and solve \eqref{P2case} in each of these 13 cases. In each case define $A:=w_i$, $B:=w_i+w_j - w_k$ and $C:=w_k$. For example, the case $w_1 < w_2 < w_3$ is equivalent to counting integer forms $AX^2+B XY + C Y^2$ of discriminant $B^2 - 4AC = -\Delta$ satisfying $B>0$, $C>A$, $-A<B<A$. Adding all 13 cases gives: 3 times the number of integer forms $AX^2+BXY+CY^2$ of discriminant $-\Delta$ satisfying $B \neq 0$, $C > A$, $-A < B \leq A$ or satisfying $B \neq 0$, $C=A$, $0 \leq B \leq A$ and counted by the size of the automorphism group (see footnote 3). We call this number $\tilde{H}(\Delta)$. In the case $c_1$ is odd, it is easy to see that $B \neq 0$ is automatically satisfied. Therefore $\tilde{H}(\Delta) = H(\Delta)$ because every equivalence class of binary integer positive definite quadratic forms has a unique representative of this form (i.e.~its Gauss reduced form \cite{BS}). For $c_1$ even $\tilde{H}(\Delta)$ and $H(\Delta)$ differ by the number of (Gauss reduced) integer forms with $B=0$, i.e.
$$
-\frac{1}{2} \sigma_0(\Delta/4).
$$

We now discuss how the formulae for $\PP(1,1,2)$ are obtained. The case of $\PP(1,2,2)$ is similar and the case of $\PP(2,2,2)$ follows directly from the discussion in Example 4. By Example 2 of this section, the system which needs to be solved is again \eqref{P2case} together with the \emph{additional} requirement $2 \ | \ w_2$. Moreover $Q_{E, c_1}(w_1,w_2,w_3)$ and $\Delta$ are related as follows
$$
Q_{E, c_1}(w_1,w_2,w_3) = - \frac{1}{4} \Delta + \frac{1}{4} c_{1}^{2}.
$$
Going through all 13 cases gives the following answer to the system
\begin{align*}
&\Big| \big\{\mathrm{Gauss \ red. \ forms} \ AX^2+B XY +CY^2 \ \mathrm{with} \ B \neq 0, \ \mathrm{disc.} -\Delta \ \mathrm{and} \ 2 \ | \ A \big\} \Big| \\
&+\Big| \big\{\mathrm{Gauss \ red. \ forms} \ AX^2+B XY +CY^2 \ \mathrm{with} \ B \neq 0, \ \mathrm{disc.} -\Delta \ \mathrm{and} \ 2 \ | \ A-B+C \big\} \Big| \\
&+\Big| \big\{\mathrm{Gauss \ red. \ forms} \ AX^2+B XY +CY^2 \ \mathrm{with} \ B \neq 0, \ \mathrm{disc.} -\Delta \ \mathrm{and} \ 2 \ | \ C \big\} \Big|,
\end{align*}
where $| \cdot |$ is the number of elements counted by the size of the automorphism group. In the case $c_1$ is odd, $B$ is odd so $\Delta \equiv -1 \ \mathrm{mod} \ 4$. This implies $\Delta \equiv -1  \ \mathrm{mod} \ 8$ or $-5  \ \mathrm{mod} \ 8$. In the former case the above reduces to $2H(\Delta)$ and in the latter case the above is $0$. In the case $c_1$ is even, the above reduces to $\tilde{H}(\Delta) + 2 \tilde{H}(\Delta/4)$. Here $\tilde{H}(\Delta) = H(\Delta) - \frac{1}{2} \sigma_0(\Delta/4)$ as in Klyachko's case. Moreover, $\tilde{H}(\Delta/4) = H(\Delta/4) - \frac{1}{2} \sigma_0(\Delta/16)$ when $\Delta \equiv 0 \ \mathrm{mod} \ 16$ and $\tilde{H}(\Delta/4) = H(\Delta/4)$ otherwise.

For the last part of the theorem it suffices to show $\sum_{n>0}H(8n-1)q^{n-1/8}$ ($q=e^{2 \pi i z}$, $\mathrm{Im}(z)>0$) is the holomorphic part of a modular form of weight $3/2$. Let $$\curly{H}(z)= -\frac{1}{12} + \sum_{n>0} H(n) q^n, \ q = e^{2 \pi i z}, \ \mathrm{Im}(z) > 0.$$ By work of Zagier \cite{Zag} (see also \cite{HZ}), this is the holomorphic part of a modular form of weight $3/2$. Our generating function can be written as\footnote{A similar argument is used in \cite[p.~92]{HZ}. Also note that, in the case of $\PP^2$, modularity of $\sum_{n>0}H(4n-1)q^{n-1/4}$ follows along the same lines.} $$\sum_{n>0}H(8n-1)q^{n-1/8}=\frac{1}{8}\sum_{r=0}^7e^{2\pi i r/8}\curly{H}\Big(\frac{z+r}{8}\Big)$$ and is therefore the holomorphic part of a modular form of weight $3/2$.  
\end{proof}

\newpage

\noindent {\tt{amingh@math.umd.edu}}, {\tt{y.jiang@ku.edu}},  {\tt{mkool@math.ubc.ca}} 




\begin{thebibliography}{MNOP2} 
\bibitem[BCS]{BCS} L.~A.~Borisov, L.~Chen and G.~G.~Smith, \textit{The orbifold Chow ring of toric Deligne-Mumford stacks}, J.~Amer.~Math.~Soc.~18 193--215 (2005). 
\bibitem[BH]{BH} L.~A.~Borisov and R.~P.~Horja, \textit{On the $K$-theory of smooth toric DM stacks}, Snowbird lectures on string geometry 21--42, Contemp.~Math.~401, Amer.~ Math.~Soc., Providence RI (2006).
\bibitem[BS]{BS} Z.~I.~Borevich and I.~R.~Shafarevich, \textit{Number theory}, Academic Press Inc.~(1966).
\bibitem[DS]{DS} R.~Dijkgraaf and P.~Su{\polishl}kowski, \textit{Instantons on ALE spaces and orbifold partitions}, JHEP 0803 (2008) 013. 
\bibitem[FMN]{FMN} B.~Fantechi, E.~Mann and F.~Nironi, \textit{Smooth toric DM stacks}, Crelle 648 201--244 (2010).
\bibitem[Ful]{Ful} W.~Fulton, \textit{Introduction to toric varieties}, Princeton University Press (1993).
\bibitem[Got]{Got} L.~G\"ottsche, \textit{Theta functions and Hodge numbers of moduli spaces of sheaves on rational surfaces}, Comm.~Math.~Phys.~206 105--136 (1999).
\bibitem[Gut]{Gut} A.~Gutwirth, \textit{The action of an algebraic torus on the affine plane}, Trans.~AMS 105 407--414 (1962).
\bibitem[Har]{Har} R.~Harthorne, \textit{Algebraic geometry}, Springer-Verlag (1977).
\bibitem[HZ]{HZ} F.~Hirzebruch and D.~Zagier, \textit{Intersection numbers of curves on Hilbert modular surfaces and modular forms of Nebentypus}, Invent.~Math.~36 57--113 (1976).
\bibitem[Kan]{Kan} T.~Kaneyama, \textit{On equivariant vector bundles on an almost homogeneous variety}, Nagoya Math.~J.~57 65--86 (1975).
\bibitem[Kly1]{Kly1} A.~A.~Klyachko, \textit{Equivariant bundles on toral varieties}, Math.~USSR Izvestiya 35 337--375 (1990).
\bibitem[Kly2]{Kly2} A.~A.~Klyachko, \textit{Vector bundles and torsion free sheaves on the projective plane}, preprint Max Planck Institut f\"ur Mathematik (1991).
\bibitem[Kly3]{Kly3} A.~A.~Klyachko, \textit{Moduli of vector bundles and numbers of classes}, Functional Analysis and Its Applications 25 67--69 (1991). 
\bibitem[Koo1]{Koo1} M.~Kool, \textit{Fixed point loci of moduli spaces of sheaves on toric varieties}, Adv.~Math.~227 1700--1755 (2011).
\bibitem[Koo2]{Koo2} M.~Kool, \textit{Euler characteristics of moduli spaces of torsion free sheaves on toric surfaces}, Geom.~Ded.~DOI 10.1007/s10711-014-9966-2 (2014)
\bibitem[KR1]{KR1} M.~Koras and P.~Russell, \textit{Contractible threefolds and $\C^*$-actions on $\C^3$}, J.~Alg.~Geom.~6 671--695 (1997).
\bibitem[KR2]{KR2} M.~Koras and P.~Russell, \textit{$\C^*$-actions on $\C^3$: the smooth locus of the quotient is not of hyperbolic type}, J.~Alg.~Geom.~8 603--694 (1999).
\bibitem[Nir]{Nir} F.~Nironi, \textit{Moduli spaces of semistable sheaves on projective Deligne-Mumford stacks}, arXiv:0811.1949.
\bibitem[OS]{OS} M.~Olsson and J.~Starr, \textit{Quot functors for Deligne-Mumford stacks}, Comm.~Alg.~31 2069--4096 (2003). Special issue in honour of Steven L.~Kleiman.
\bibitem[Pay]{Pay} S.~Payne, \textit{Moduli of toric vector bundles}, Compos.~Math.~144 1199--1213 (2008).
\bibitem[Per1]{Per1} M.~Perling, \textit{Graded rings and equivariant sheaves on toric varieties}, Math.~Nachr.~263--264 181--197 (2004).
\bibitem[Per2]{Per2} M.~Perling, \textit{Moduli for equivariant vector bundles of rank two on smooth toric surfaces}, Math.~Nachr.~265 87--99 (2004).
\bibitem[Per3]{Per3} M.~Perling, \textit{Resolutions and moduli for equivariant sheaves over toric varieties}, Ph.D.~dissertation University of Kaiserslautern (2003).
\bibitem[Rom]{Rom} M.~Romagny, \textit{Group actions on stacks and applications}, Michigan Math.~J.~53 209--236 (2005).
\bibitem[RT]{RT} M.~Rossi and L.~Terracini, \textit{Weighted projective spaces from the toric point of view with computational applications}, arXiv:1112.1677.
\bibitem[Toe]{Toe} B.~To\"en, \textit{Th\'eor\`emes de Riemann-Roch pour les champs de Deligne-Mumford}, K-theory 18 33--76 (1999).
\bibitem[Vis]{Vis} A.~Vistoli, \textit{Intersection theory on algebraic stacks and on their moduli spaces}, Invent.~Math.~97 613--670 (1989). 
\bibitem[VW]{VW} C.~Vafa and E.~Witten, \textit{A strong coupling test of $S$-duality}, Nucl.~Phys.~B 431 3--77 (1994).
\bibitem[Zag]{Zag} D.~Zagier, \textit{Nombres de classes et formes modulaires de poids 3/2}, C.~R.~Acad.~Sc.~Paris 281A 883--886 (1975).
\end{thebibliography}
\end{document}